\newtheorem{theorem}{Theorem}[section]
\newtheorem{proposition}[theorem]{Proposition}
\newtheorem{corollary}[theorem]{Corollary}
\newtheorem{lemma}[theorem]{Lemma}
\newtheorem{remark}[theorem]{Remark}
\newtheorem{definition}[theorem]{Definition}
\begin{document}

\renewcommand{\tablename}{Diagram}

\title{On the fiber product over infinite-genus Riemann surfaces}
\author{John A. Arredondo, Sa\'ul Quispe and Camilo Ram\'irez Maluendas}
\address{Facultad de Matem\'aticas e Ingenier\'ias, Fundaci\'on Universitaria Konrad Lorenz, Bogot\'a 110231, Colombia.}
\email{alexander.arredondo@konradlorenz.edu.co}
\address{Departamento de Matem\'atica y Estad\'{\i}stica, Universidad de La Frontera, Temuco 4780000, Chile.}
\email{saul.quispe@ufrontera.cl}
\address{Departamento de Matem\'aticas y Estad\'istica, Universidad Nacional de Colombia, Sede Manizales, Manizales 170004, Colombia.}
\email{camramirezma@unal.edu.co}
\thanks{The second author was partially supported by Project FONDECYT 1220261 and the third author was partially supported by Proyecto Hermes}
\keywords{Riemann surfaces, Fiber product, Non-compact surfaces, Infinite superelliptic curves, Loch Ness monster.}
\subjclass[2000]{14H37, 14H55, 57N05, 57N16, 30F10}
\maketitle


\begin{abstract}
Considering non-constant holomorphic maps $\beta_{i}:S_{i}\to S_{0}$, $i\in\{1,2\}$, between non-compact Riemann surfaces for which it is associated  its fiber product $S_{1}\times_{(\beta_{1},\beta_{2})}S_{2}$. With this setting, in this paper we relate the ends space of such fiber product to the ends space of its normal fiber product. Moreover, we provide conditions on the maps $\beta_{1}$ and $\beta_{2}$ to guarantee connectednes on the fiber product. From these conditions, we link the ends space of fiber product with the topological type of the Riemann surfaces $S_{1}$ and $S_{2}$. We also study the fiber product over infinite hyperelliptic curves and discuss its connectedness and ends space. 
\end{abstract}




\section{Introduction}

In the schemes theory introduced by Grothendieck \cite{Gro60}, a scheme is a generalization of the notion of algebraic variety  allowing define varieties over any commutative ring.
Formally, a scheme is a topological space together with commutative rings for all of its open sets, which arises from gluing together spectra (spaces of prime ideals) of commutative rings along their open subsets. In other words, it is a ringed space which is locally a spectrum of a commutative ring \cite{Hart}. For the schemes $X_{1},X_{2}$ with morphisms $\beta_{1},\beta_{2}$ over a scheme $S$, it is define the  {\em fiber product} $X_{1}\times_{(\beta_{1},\beta_{2})} X_{2}$ as a new scheme with morphisms to $X_{1}$ and $X_{2}$ making a commutative diagram with any other scheme $Z$ with morphisms over $X_{1}$, $X_{2}$ and $X_{1}\times_{(\beta_{1},\beta_{2})} X_{2}$. As principal property, fiber products always exists in the category of schemes \cite{Iitaka}. Such schemes can be thought as set, variety, surface among others.

 The fiber product has been studied over compact Riemann surfaces \cite{Ruben}, \cite{Hidalgo-Reyes-Vega}. Whereas  along this paper, we will focus on the fiber product over non-compact Riemann surfaces, inasmuch as of Ker\'ekj\'art\'o's classification theorem of non-compact surfaces \cite{Ker},\cite{Ian}. The topological type of any surface $S$ (recall that we are assuming no boundary) is given by: (i) its genus $g\in \mathbb{N}\cup \{\infty\}$ and (ii) a couple of nested, compact, metrizable and totally disconnected spaces ${\rm Ends}_{\infty}(S)\subset {\rm Ends}(S)$. The set ${\rm Ends}(S)$ (respectively, ${\rm Ends}_{\infty}(S)$) is known as the ends space (respectively, the non-planar ends space) of $S$. Of all non-compact surfaces, as usually, we will focus on finite coverings of \emph{the Loch Ness monster (LNM)}, the unique, up to homeomorphism, infinite genus surface with exactly one end \cite{PSul}, see Figure \ref{Fig:LNM}. 

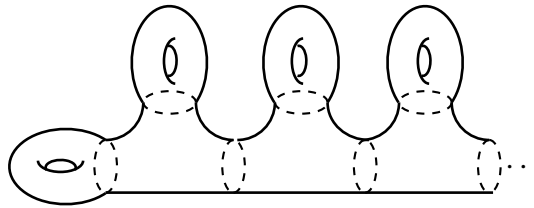
\begin{figure}[h!]
	\centering
	\begin{tikzpicture}[baseline=(current bounding box.north)]  
	\begin{scope}[scale=0.5]
	\clip (-5.8,-1.8) rectangle (9,3.9);
	\draw[line width=1pt] (-3,0) arc (45:315:1.5 and 1);
	\draw [line width=1pt] (-4.8,-0.55) arc
	[
	start angle=180,
	end angle=360,
	x radius=6mm,
	y radius =3mm
	] ;
	\draw [line width=1pt] (-3.82,-0.8) arc
	[
	start angle=-20,
	end angle=200,
	x radius=4mm,
	y radius =2mm
	] ;
	
	\draw [dashed, line width=0.8pt] (-3,-1.4) arc
	[
	start angle=-90,
	end angle=90,
	x radius=3mm,
	y radius =7mm
	] ;
	\draw [dashed, line width=0.8pt] (-3,0) arc
	[
	start angle=90,
	end angle=270,
	x radius=3mm,
	y radius =7mm
	] ;
	\draw [line width=1pt] (-3,0) to[out=0,in=-90] (-2,1);
	\draw [dashed, line width=0.8pt] (-0.6,1) arc
	[
	start angle=0,
	end angle=180,
	x radius=7mm,
	y radius =3mm
	] ;
	\draw [dashed, line width=0.8pt] (-2,1) arc
	[
	start angle=180,
	end angle=360,
	x radius=7mm,
	y radius =3mm
	] ;
	\draw [line width=1pt] (0.5,0) to[out=0,in=-90] (1.5,1);
	\draw [dashed, line width=0.8pt] (2.9,1) arc
	[
	start angle=0,
	end angle=180,
	x radius=7mm,
	y radius =3mm
	] ;
	\draw [dashed, line width=0.8pt] (1.5,1) arc
	[
	start angle=180,
	end angle=360,
	x radius=7mm,
	y radius =3mm
	] ;
	\draw[line width=1pt] (2.9,1) arc (-45:225:1 and 1.5);
	\draw [line width=1pt] (2.25,2.7) arc
	[
	start angle=90,
	end angle=270,
	x radius=3mm,
	y radius =6mm
	] ;
	\draw [line width=1pt] (2.1,2.5) arc
	[
	start angle=100,
	end angle=-100,
	x radius=2mm,
	y radius =4mm
	] ;
	\draw [line width=1pt] (2.9,1) to[out=-90,in=180] (4,0);
	\draw[line width=1pt] (-0.6,1) arc (-45:225:1 and 1.5);
	\draw [line width=1pt] (-1.15,2.7) arc
	[
	start angle=90,
	end angle=270,
	x radius=3mm,
	y radius =6mm
	] ;
	\draw [line width=1pt] (-1.35,2.5) arc
	[
	start angle=100,
	end angle=-100,
	x radius=2mm,
	y radius =4mm
	] ;
	\draw [line width=1pt] (-0.6,1) to[out=-90,in=180] (0.4,0);
	\draw [dashed, line width=0.8pt] (0.4,-1.4) arc
	[
	start angle=-90,
	end angle=90,
	x radius=3mm,
	y radius =7mm
	] ;
	\draw [dashed, line width=0.8pt] (0.4,0) arc
	[
	start angle=90,
	end angle=270,
	x radius=3mm,
	y radius =7mm
	] ;
	\draw [dashed, line width=0.8pt] (3.9,-1.4) arc
	[
	start angle=-90,
	end angle=90,
	x radius=3mm,
	y radius =7mm
	] ;
	\draw [dashed, line width=0.8pt] (3.9,0) arc
	[
	start angle=90,
	end angle=270,
	x radius=3mm,
	y radius =7mm
	] ;
	\draw [line width=1pt](-3,-1.4) -- (7.3,-1.4);
	
	\node at (8.1,-0.7) {$\ldots$};
	\draw [line width=1pt] (3.8,0) to[out=0,in=-90] (4.8,1);
	\draw[line width=1pt] (6.2,1) arc (-45:225:1 and 1.5);
	\draw [line width=1pt] (5.6,2.7) arc
	[
	start angle=90,
	end angle=270,
	x radius=3mm,
	y radius =6mm
	] ;
	\draw [line width=1pt] (5.4,2.5) arc
	[
	start angle=100,
	end angle=-100,
	x radius=2mm,
	y radius =4mm
	] ;
	\draw [dashed, line width=0.8pt] (6.2,1) arc
	[
	start angle=0,
	end angle=180,
	x radius=7mm,
	y radius =3mm
	] ;	
	\draw [dashed, line width=0.8pt] (6.2,1) arc
	[
	start angle=0,
	end angle=-180,
	x radius=7mm,
	y radius =3mm
	] ;	
	\draw [line width=1pt] (6.2,1) to[out=-90,in=180] (7.2,0);
	\draw [dashed, line width=0.8pt] (7.2,0) arc
	[
	start angle=90,
	end angle=270,
	x radius=3mm,
	y radius =7mm
	] ;
	\draw [dashed, line width=0.8pt] (7.2,0) arc
	[
	start angle=90,
	end angle=-90,
	x radius=3mm,
	y radius =7mm
	] ;	
	\end{scope}
	\end{tikzpicture}
	\caption{\emph{The Loch Ness monster.}}
	\label{Fig:LNM}
\end{figure}

 With the scenario introduced at this point, let us consider the fiber product $S_1\times_{(\beta_1,\beta_2)}S_2$ of the two pairs $(S_1,\beta_1)$ and $(S_2,\beta_2)$, where $S_0, S_1$ and $S_2$ are non-compact Riemann surfaces and $\beta_1:S_1\to S_0$ and $\beta_2:S_2\to S_0$ are non-constant holomorphic maps. In general $S_1\times_{(\beta_1,\beta_2)}S_2$ is a \textit{singular Riemann surface}, but these objects have associated its respective ends space. By deleting the \emph{locus of singular points} of the fiber product (this is a discrete set), we obtain a finite collection of analytically (finite) Riemann surfaces $\widetilde{R}_1,\ldots, \widetilde{R}_n$. Each $\widetilde{R}_j$ can be compactified (by adding its puntures) to obtain a unique, up to biholomorphism, Riemann surface called an \emph{irreducible component} of the fiber product. The union of all the irreducible components is called the \emph{normal fiber product} $  \widetilde{S_1\times_{(\beta_1, \beta_2)} S_2}$. In Theorem \ref{teo1} we describe the ends space of the fiber product from the point of view of the ends space of its respective irreducible components, when its locus of singular points is finite. More precisely, the result states that the ends space of the fiber product ${\rm Ends}(S_1\times_{(\beta_1, \beta_2)} S_2)$ is homeomorphic to the ends space of the normal fiber product ${\rm Ends}(  \widetilde{S_1\times_{(\beta_1, \beta_2)} S_2})$, if the locus of singular points is finite.

When $S_{0}$ is the Riemann sphere $\hat{\mathbb{C}}$, and the other two Riemann surfaces $S_{1}$ and $S_{2}$ are compact, W. Fulton and J. Hansen proved that the fiber product  $S_{1}\times_{(\beta_{1},\beta_{2})}S_{2}$ is connected \cite{Fulton-Hansen}*{Theorem p.160}. Nevertheless, if $S_{0}$ is a compact Riemann surface with genus, then the fiber product could be disconnected  \cite{Hidalgo-Reyes-Vega}*{Examples 2 and 3}. In Theorem \ref{theorem:ends_space_of_fiber_product}, we consider the non-compact Riemann surfaces $S_{1}$ and $S_{2}$, and the branched covering map $\beta_{i}:S_{i}\to\mathbb{C}$, such that $i\in\{1,2\}$; then we provide conditions on its branch points to guarantee connectedness on the fiber product $S_{1}\times_{(\beta_{1},\beta_{2})}S_{2}$. In Theorem \ref{theorem:ends_space_of_fiber_product2} we connect the end space of the fiber product ${\rm Ends}(S_{1}\times_{(\beta_{1},\beta_{2})}S_{2}$)  with the topological type of the non-compact Riemann surfaces involved, \emph{i.e.}, ${\rm Ends}(S_{1})$ and ${\rm Ends}(S_{2})$.

Different holomorphic structures on the LNM, which come form \emph{infinite hyperelliptic} and \emph{infinite superelliptic curves} were studied in  \cite{AGHQR}. It is well-known that if  $(w_{l})_{l\in\mathbb{N}}$ is a sequence of different complex numbers such that the its norm sequence $(\vert w_{l} \vert)_{l\in\mathbb{N}}$ diverges, then there is a Weierstrass's theorem ensuring the existence of an entire map $f:\mathbb{C}\to\mathbb{C}$ whose only zeros are the points of this sequence and, each one of them is simple. Therefore, for $n\geq 2$ the affine plane curve
$$
S(f)=\{(z_{1},z_{2}):z_{2}^{n}=f(z_{1})\},
$$
is called \emph{infinite superelliptic curve}. If $n=2$, the affine curve $S(f)$ is known as \emph{infinite hyperelliptic curve}. This infinite superelliptic curve is a Riemann surface topologically equivalent to the LNM. We consider an infinite superelliptic curve $S(f)$ and a suitable finite or infinite superelliptic curve $S(g)$; and the projection maps onto first coordinate $\beta_{1}:S(f)\to\mathbb{C}$ and $\beta_{2}:S(g)\to\mathbb{C}$, then we take the (singular Riemann surface) fiber product $\mathcal{S}(f,g):=S(f)\times_{(\beta_{1},\beta_{2})}S(g)$. Our  Theorem \ref{theorem:topology-of-fiber-product-superelliptic-curve} describes the connectedness of the fiber product and the ends space of $\mathcal{S}(f,g)$. In addition, in Theorem \ref{theorem:two-isomorphic-regular-riemann-surfaces} we determine  necessary and sufficient conditions guaranteeing that such two singular Riemann surfaces $\mathcal{S}(f,g)$ and $\mathcal{S}(f,h)$ are isomorphic. When the Riemann surfaces $S(f)$ and $S(g)$ are considered as infinite hyperelliptic curve, then the group $\mathbb{Z}_{2}\oplus\mathbb{Z}_{2}$ acts on the fiber product $\mathcal{S}(f,g)$ such that the quotients space $\mathcal{S}(f,g)/\mathbb{Z}_{2}\oplus\{Id\}$ and $\mathcal{S}/\{Id\}\oplus \mathbb{Z}_{2}$ are Riemann surfaces biholomorphic to $S(f)$ and $S(g)$ (see Lemma \ref{lemma:quotient}). Finally, in Proposition \ref{teo-cubrientes-dobles} we study the singular Riemann surface $\mathcal{S}(f,g)$ as covering of the curves $S(f):=\mathcal{S}(f,g)/\mathbb{Z}_{2}\oplus\{Id\}$ and $S(g):=\mathcal{S}/\{Id\}\oplus \mathbb{Z}_{2}$.

The paper is organized as follows. In Section \ref{sec:Preliminaries}, we recall the definition of ends and ends space of topological spaces, singular Riemann surfaces and fiber product. Sections \ref{sec:ends_spaces_fiber_product_riemann_surfaces} and \ref{sec:fiber_product_infinite_superelliptic_curve} are devoted to prove our main results.

\section{Preliminaries}\label{sec:Preliminaries}

\subsection{Ends of topological spaces.}\label{subsection:ends_space}  The idea of end was introduced by H. Freudenthal in \cite{Fre1}. Geometrically, an end of a suitable topological space is a point at infinity. We will briefly describe the concept of end for certain topological spaces and surfaces.

\begin{definition}[\cite{Fre1}*{1. Kapitel}]\label{definition:end}
Let $X$ be a locally compact, locally connected, connected, and Hausdorff space, and let $(U_n)_{n\in\mathbb{N}}$ be  an infinite nested sequence $U_{1}\supset U_{2}\supset\ldots$ of non-empty connected open subsets of $X$, such that the following hold:
\begin{itemize}
\item[\textbf{(1)}]  For each $n\in\mathbb{N}$, the boundary $\partial U_n$ of $U_{n}$ is compact, 

\item[\textbf{(2)}] The intersection $\cap_{n\in\mathbb{N}}\overline{U_{n}}=\emptyset
$, and

\item[\textbf{(3)}] For each compact  $K\subset X$ there is $m\in\mathbb{N}$ such that $K\cap U_m =\emptyset$.
\end{itemize}
Two nested sequences $(U_n)_{n\in\mathbb{N}}$ and $(U'_{n})_{n\in\mathbb{N}}$ are {\it equivalent} if for each $n\in\mathbb{N}$ there exist $j,k\in\mathbb{N}$ such that $U_{n}\supset U'_{j}$, and $U'_{n}\supset U_{k}$. The corresponding equivalence classes $[U_{n}]_{n\in\mathbb{N}}$ of these sequences are called the {\it ends} of $X$, and the set of all ends of $X$ is denote by ${\rm Ends}(X)$.
\end{definition}

\begin{remark}\label{remarK:different-classes}
 If the ends $[U_{n}]_{n\in\mathbb{N}}$ and $[V_{n}]_{n\in\mathbb{N}}$ of $X$ are different, then there are $l,m\in\mathbb{N}$ such that $U_{l}\cap V_{m}=\emptyset$. One can suppose without of generality that $U_{1}\cap V_{1}=\emptyset$.
\end{remark}

The {\it ends space} of $X$ is the topological space having the ends of $X$ as elements, and endowed with the following topology: for every non-empty connected open subset $U$ of
$X$ such that its boundary $\partial U$ is compact, we define
\begin{equation*}\label{eq:end_open}
U^{*}:=\{[U_{n}]_{n\in\mathbb{N}}\in{\rm Ends}(X)\hspace{1mm}|\hspace{1mm}U_{j}\subset U\hspace{1mm}\text{for some }j\in\mathbb{N}\}.
\end{equation*}
Then we take the set of all such $U^{\ast}$, with $U$ open with compact boundary in $X$, as a basis for the topology of ${\rm Ends}(X)$. It is easy check that if $U\subset V$ are open subsets of $X$ with compact boundary, then $U^{\ast}\subset V^{\ast}$.

\begin{theorem}[\cite{Ray}*{Theorem 1.5}]
The space ${\rm Ends}(X)$ is Hausdorff, totally disconnected,
and compact. In other words, the end space ${\rm Ends}(X)$ is a closed subset of the Cantor set.
\end{theorem}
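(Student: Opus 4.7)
The plan is to verify the three properties---Hausdorff, totally disconnected, and compact---in turn, and then observe that any compact Hausdorff totally disconnected second-countable space embeds homeomorphically as a closed subset of the Cantor set. The required second-countability follows from the $\sigma$-compactness inherent to the applications in this paper, in particular to Riemann surfaces.

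For the Hausdorff property, given two distinct ends $[U_{n}]_{n\in\mathbb{N}}$ and $[V_{n}]_{n\in\mathbb{N}}$, Remark \ref{remarK:different-classes} allows me to assume $U_{1}\cap V_{1}=\emptyset$. Then $U_{1}^{\ast}$ and $V_{1}^{\ast}$ are disjoint basic open neighborhoods: any common end $[W_{n}]$ would provide $j,k\in\mathbb{N}$ with $W_{j}\subset U_{1}$ and $W_{k}\subset V_{1}$, so that $W_{\max(j,k)}\subset U_{1}\cap V_{1}=\emptyset$, contradicting the non-emptiness of the defining sets.

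For total disconnectedness, the key step is to prove that every basic open set $U^{\ast}$ is simultaneously closed. Given $[W_{n}]\notin U^{\ast}$, condition (3) of Definition \ref{definition:end} combined with compactness of $\partial U$ produces $N\in\mathbb{N}$ with $W_{N}\cap\partial U=\emptyset$. Connectedness of $W_{N}$ then forces either $W_{N}\subset U$---impossible, since it would put $[W_{n}]$ in $U^{\ast}$---or $W_{N}\subset X\setminus\overline{U}$. In this latter case, $W_{N}^{\ast}$ is a basic open neighborhood of $[W_{n}]$ and, using nestedness of defining sequences, is disjoint from $U^{\ast}$. A Hausdorff space with a basis of clopen sets is totally disconnected.

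The compactness argument is the most delicate step. My plan is to identify ${\rm Ends}(X)$ with the inverse limit $\varprojlim\mathcal{C}_{n}$, where for an exhaustion $K_{1}\subset K_{2}\subset\cdots$ of $X$ by compact sets, $\mathcal{C}_{n}$ denotes the collection of connected components of $X\setminus K_{n}$ with non-compact closure in $X$, equipped with the obvious inclusion-induced bonding maps $\mathcal{C}_{n+1}\to\mathcal{C}_{n}$. The main obstacle is to show that each $\mathcal{C}_{n}$ is finite; this uses local compactness and local connectedness to argue that only finitely many components of $X\setminus K_{n}$ can accumulate on the compact boundary $\partial K_{n}$. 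Granted finiteness, $\varprojlim\mathcal{C}_{n}$ is a closed subspace of a product of finite discrete spaces, hence compact by Tychonoff. The homeomorphism with ${\rm Ends}(X)$ sends each end $[U_{n}]$ to the coherent sequence of non-compact components of $X\setminus K_{n}$ that eventually contain $U_{n}$, while its inverse assembles a compatible sequence of components into a nested defining sequence for an end.
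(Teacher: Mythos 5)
The paper gives no argument for this statement --- it is quoted verbatim from Raymond --- so your proof stands or falls on its own; the route you take (disjoint basic neighborhoods for Hausdorffness, clopen basic sets for total disconnectedness, an inverse limit of the finite sets of non--relatively-compact complementary components for compactness) is the standard one and matches the classical proof rather than anything in this paper. Your Hausdorff and total-disconnectedness arguments are correct as written, and your observation that the embedding into the Cantor set needs second countability (available here since the spaces of interest are second countable, so one has a countable exhaustion and, once each $\mathcal{C}_n$ is finite, a countable clopen basis of ${\rm Ends}(X)$) is well taken.

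The one genuine gap is your justification of the key finiteness claim. You assert that ``only finitely many components of $X\setminus K_{n}$ can accumulate on the compact boundary $\partial K_{n}$.'' That statement is false in general: every component of $X\setminus K_{n}$ has nonempty boundary contained in $\partial K_{n}$ (because $X$ is connected), hence accumulates on it, and there may be infinitely many such components --- take $X=\mathbb{R}^{2}$ and $K_{n}$ a comb-shaped compact set, whose complement has infinitely many bounded gaps all accumulating on $\partial K_{n}$. What you actually need, and what is true, is that only finitely many components of $X\setminus K_{n}$ have non-compact closure, and this requires a genuine argument: choose a compact $L$ with $K_{n}\subset \mathrm{int}\,L$ (local compactness); any component whose closure is non-compact meets $\mathrm{int}\,L$ (its boundary lies in $K_{n}$) and also $X\setminus L$, hence, being connected, meets the compact set $\partial L$; cover $\partial L$ by finitely many connected open sets contained in $X\setminus K_{n}$ (local connectedness, using $\partial L\cap K_{n}=\emptyset$); each of these sets lies in a single component of $X\setminus K_{n}$, so at most finitely many components meet $\partial L$. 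With this lemma supplied, the identification of ${\rm Ends}(X)$ with $\varprojlim\mathcal{C}_{n}$ and the rest of your compactness argument go through.
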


\begin{lemma}[\cite{SPE}*{\S 5.1., p. 320}]\label{lemma:spec} 
	The space $X$ has exactly $n\in\mathbb{N}$ ends if and only if for all compact subset $K \subset X$ there is a compact subset $K^{'}\subset X$ such that $K\subset K^{'}$ and $X \setminus  K^{'}$ are $n$ component connected.
\end{lemma}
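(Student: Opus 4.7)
The plan is to exploit the standard correspondence between ends of $X$ and non-precompact connected components of $X\setminus K$ for compact $K$. Since $X$ is connected, locally connected, locally compact, Hausdorff, and (in our setting) $\sigma$-compact, I would first fix an exhaustion $K_{1}\subset K_{2}\subset\cdots$ by compacts with $K_{m}\subset\mathrm{int}(K_{m+1})$ and $\bigcup_{m}K_{m}=X$. Under these hypotheses each $X\setminus K_{m}$ has finitely many connected components, and an end of $X$ is detected by choosing, for every $m$, a non-precompact component $C_{m}$ of $X\setminus K_{m}$ with $C_{m+1}\subset C_{m}$.

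For the $(\Rightarrow)$ direction, assume $X$ has exactly $n$ ends $e_{1},\ldots,e_{n}$ and let $K\subset X$ be compact. For each $i$ I would pick a representative nested sequence of $e_{i}$ and, iterating Remark \ref{remarK:different-classes}, choose elements $U^{(i)}$ of those sequences so that $U^{(1)},\ldots,U^{(n)}$ are pairwise disjoint, each has compact boundary, and each is disjoint from $K$. Setting $K':=X\setminus\bigsqcup_{i=1}^{n}U^{(i)}$, I would then argue that $K'$ is compact: if not, the closed non-compact set $K'$ would contain a sequence escaping every compact subset of $X$ and thereby witness an end of $X$ disjoint from every $U^{(i)}$, contradicting the assumption that $e_{1},\ldots,e_{n}$ are all the ends of $X$. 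By construction $K\subset K'$ and $X\setminus K'=\bigsqcup_{i=1}^{n}U^{(i)}$ has exactly $n$ connected components.

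For the $(\Leftarrow)$ direction, I would apply the hypothesis iteratively along the exhaustion to produce a nested sequence of compacts whose complements each have exactly $n$ components, absorbing at each step the closures of any precompact components (possible since they form a finite family of sets with compact closures). This yields an exhaustion $(K_{m})_{m}$ with each $X\setminus K_{m}$ having exactly $n$ components, all non-precompact. Each end of $X$ then determines a unique component at every level, and distinct ends give eventually different choices by Remark \ref{remarK:different-classes}; conversely, every nested family of non-precompact components gives a sequence satisfying the axioms of Definition \ref{definition:end} and hence an end. Since the transition maps between consecutive levels are bijections on the $n$ non-precompact components, the inverse limit has exactly $n$ elements, so $X$ has exactly $n$ ends.

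The main technical hurdle is the control of precompact components, which inflate the raw count of components of $X\setminus K$ without producing ends. The absorption step in the backward direction is what reconciles the two counts, and it relies on local compactness and local connectedness of $X$ to guarantee that the precompact components form a finite family each with compact closure.
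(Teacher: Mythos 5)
The paper itself offers no proof of Lemma \ref{lemma:spec}; it is quoted directly from Specker, so your argument can only be judged on its own terms. Your forward direction is essentially sound: pairwise disjoint representatives $U^{(1)},\ldots,U^{(n)}$ with compact boundaries avoiding $K$ exist by Definition \ref{definition:end} and Remark \ref{remarK:different-classes}, and the compactness of $K'=X\setminus\bigsqcup_{i=1}^{n}U^{(i)}$ follows from the escaping-sequence argument you sketch (it tacitly uses $\sigma$-compactness, the fact that the complement of a compact set has only finitely many non-precompact components, and a finite-branching extraction to turn the escaping sequence into an end; the end so produced lies in no $(U^{(i)})^{\ast}$ because its defining neighborhoods contain points of $K'$ beyond every compact, so it really is an $(n+1)$-st end). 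These glossed steps are all repairable in the setting of the paper.

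The backward direction, however, has a genuine gap, located exactly at the absorption step. Absorbing the closures of the precompact components of $X\setminus K'_{m}$ into the next compact does not prevent the hypothesis, applied at stage $m+1$, from returning a compact whose complement again has precompact components; nothing in the construction yields the claimed exhaustion whose complements have exactly $n$ components \emph{all non-precompact}, and without that the inverse-limit count collapses. Indeed, under the literal reading of the statement (counting all components of $X\setminus K'$), the implication you are trying to prove is false for $n\geq 2$: take $X=\mathbb{C}$, which has one end, and for any compact $K$ choose $R$ with $K\subset\overline{D}(0,R)$ and set $K'=\overline{D}(0,R)\cup\{z:\vert z\vert=R+1\}$; then $K'$ is compact, $K\subset K'$, and $X\setminus K'$ has exactly two components (a bounded annulus and the unbounded region), so the right-hand condition holds with $n=2$. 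The statement becomes true, and your inverse-limit scheme goes through, only when one counts the non-precompact (unbounded) components, which is the form in which Specker's lemma should be read; note that the paper only invokes the lemma with $n=1$, where a precompact complementary component would force $X$ to be compact, so the distinction is invisible in its application.
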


\subsubsection{Ends of surfaces}\label{subsection:ends_surfaces} Topological orientable surfaces are classified, up to homeomorphisms, by their genus $g(S)\in \mathbb{N}_{0}\cup\{\infty\}$, the ends space ${\rm Ends}(S)$  and, the subspace ${\rm Ends}_{\infty}(S)\subseteq {\rm Ends}(S)$ of all \emph{non-planar ends space} of $S$ (or, ends accumulated by genus). In addition, any pair of nested closed subsets of the Cantor set can be realized as the space of ends of a connected, orientable topological surface. For more details, we refer the reader to \cite{Ian}.

\begin{theorem}[Classification of topological surfaces, \cite{Ker}*{\S 7}, \cite{Ian}*{Theorem 1}]\label{Thm:ClassificationOfSurfaces}
	Two orientable surfaces $S_1$ and $S_2$  having the same genus are topological equivalent if and only if there exists a homeomorphism $f: {\rm Ends}(S_1)\to {\rm Ends}(S_2)$ such that $f( {\rm Ends}_{\infty}(S_1))= {\rm Ends}_{\infty}(S_2)$.
\end{theorem}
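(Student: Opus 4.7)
The plan is to handle the two implications separately. For the forward direction, a homeomorphism $\varphi\colon S_1 \to S_2$ carries compact sets to compact sets and preserves connectedness, so for any nested sequence $(U_n)_{n\in\mathbb{N}}$ satisfying the conditions of Definition \ref{definition:end}, the image sequence $(\varphi(U_n))_{n\in\mathbb{N}}$ satisfies the same conditions in $S_2$. This yields an induced map on ends that is a homeomorphism by symmetry, and because planarity of an open subsurface is a topological property, non-planar ends map to non-planar ends. Genus equality is automatic from the definition of $g(S)$ as a topological invariant.

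For the reverse direction I would use the standard exhaustion-and-matching technique. First, for each $i\in\{1,2\}$, build a \emph{canonical exhaustion} $K_{i,1}\subset K_{i,2}\subset\cdots$ of $S_i$ by compact bordered subsurfaces with $\bigcup_n K_{i,n}=S_i$, arranged so that (a) each connected component of $S_i\setminus K_{i,n}$ is unbounded and has a single simple closed boundary curve, and (b) each "layer" $K_{i,n+1}\setminus\mathrm{int}(K_{i,n})$, restricted to each complementary component, is one of three elementary building blocks: a cylinder (continuation), a pair of pants (branching into two), or a handle (continuation with genus added). This presents each $S_i$ as the realization of an infinite rooted tree whose infinite branches are naturally identified with ${\rm Ends}(S_i)$, and whose branches passing through infinitely many handle-layers correspond to ${\rm Ends}_{\infty}(S_i)$.

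Second, use the homeomorphism $f$ to match the two trees. The clopen partitions of ${\rm Ends}(S_i)$ induced by the components of $S_i\setminus K_{i,n}$ form a neighbourhood basis, so by a back-and-forth argument (refining both exhaustions simultaneously when necessary), one can arrange that at every level $n$ the components of $S_1\setminus K_{1,n}$ correspond bijectively, via $f$, to those of $S_2\setminus K_{2,n}$, and that the genus accumulated along any branch on the left matches that along the corresponding branch on the right (using the preservation of ${\rm Ends}_{\infty}$ under $f$). Once this combinatorial matching is achieved, construct the homeomorphism $\Phi\colon S_1\to S_2$ inductively: map $K_{1,1}$ onto $K_{2,1}$ by any homeomorphism (possible since compact orientable bordered surfaces are classified by genus and number of boundary components), then extend layer-by-layer, sending each building block to its matched counterpart and keeping the boundary identifications consistent.

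The main obstacle is precisely the simultaneous back-and-forth refinement of the exhaustions. An arbitrary initial choice on each side will generally not be compatible with $f$, so one must refine iteratively: fixing the level-$n$ partition on one side, find a level $m$ on the other side whose partition refines the first under $f^{-1}$, then return and refine the original further, and so on. Compactness of the end spaces guarantees that finitely many refinements suffice at each finite stage, while the preservation of ${\rm Ends}_{\infty}$ ensures that the genus can be distributed along branches in a consistent manner (otherwise one branch would accumulate more handles than its image, contradicting $f({\rm Ends}_{\infty}(S_1))={\rm Ends}_{\infty}(S_2)$). Once the tree-isomorphism is in place, gluing the building blocks together is routine.
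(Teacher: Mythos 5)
This statement is not proved in the paper at all: it is the classical Ker\'ekj\'art\'o--Richards classification theorem, quoted with citations to \cite{Ker} and \cite{Ian}, so there is no internal proof to compare yours with. Your outline reproduces the standard strategy of Richards' argument (canonical exhaustions by compact bordered subsurfaces, identification of ends with infinite branches of the associated tree, a back-and-forth matching of the clopen partitions of the end spaces, and a layer-by-layer construction of the homeomorphism), and the forward implication as you state it is correct.

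In the reverse direction, though, the crucial bookkeeping is asserted rather than proved. First, your normalization (a)--(b) is not automatic: a complementary component can have several boundary curves and a layer can carry arbitrary genus and branching, so you must argue that the exhaustion can be refined, and boundary curves amalgamated, until every layer piece is one of your three elementary blocks. More seriously, the claim that one can arrange ``the genus accumulated along any branch on the left matches that along the corresponding branch on the right'' does not follow merely from $f({\rm Ends}_{\infty}(S_1))={\rm Ends}_{\infty}(S_2)$: at a finite stage the compact pieces $K_{1,n}$ and $K_{2,n}$ may have different genus, and preservation of non-planar ends is only qualitative. The standard repair---which is exactly where Richards' canonical forms, or an explicit handle-absorption and handle-pushing argument, enter---is to enlarge the compact piece on one side to capture the missing handles, and to push any surplus handles outward into a complementary component whose end set meets ${\rm Ends}_{\infty}$ (possible precisely because such a component contains infinitely many handles further out); in the finite-genus case one instead invokes the hypothesis that the total genera are equal, so that beyond some compact set both surfaces are planar and only planar pieces need to be matched. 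Without spelling out this redistribution step your induction does not close; with it, your sketch becomes the classical proof.
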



\subsection{Singular Riemann surfaces}\label{Subsection:singular_riemann_surface} In this subsection, we shall explore some elements of the theory of the singular Riemann surfaces as the locus of singular points, irreducible components, isomorphism  and the group of autormophism of a singular Riemann surface. The Poincar\'e disc will be denoted by $\Delta$. 

\begin{definition}[\cite{Hidalgo-Reyes-Vega}*{Subsection 2.2}]
A \textbf{singular Riemann surface} is an one-dimensional complex analytic\footnote{It means a Riemann surface that allows singularities.} surface $S$, such that for each point $p$ of $S$ there exists a neighborhood holomorphically equivalent to a subspace of the form
\[
V_{n,m}=\{(z,w)\in \Delta \times \Delta: z^n=w^m\}\subset \Delta\times\Delta,
\]
for some integers $n,m\geq 1$. 
\end{definition}
If $n=1$ or $m=1$, then $V_{n,m}$ is holomorphically equivalent to $\Delta$. Now, if $n,m\geq 2$ and $d\geq 1$ is the greatest common divisor of $n$ and $m$, then we write $n=d\widehat{n}$ and $m=d\widehat{m}$, where  $\widehat{n}, \widehat{m}\geq 1$ are relatively prime integers. Thus the space $V_{n,m}$ can be written as
$$V_{n, m}=\left\{(z, w)\in \Delta\times \Delta:\ \prod_{k=0}^{d-1}(z^{\widehat{n}}-\omega^kw^{\widehat{m}})=0\right\},$$ 
and it is homeomorphic to a collection of $d$ cones with common vertex at $(0,0)$ (see Figure \ref{fig:titulo}), where $\omega$ is a $d$-th primitive root of unity. In particular, if $d=1$, then the space $V_{n,m}$ is holomorphically equivalent to $\Delta$.

If $d\geq 2$, then the point $p\in S$ is called {\em singular}, and the \emph{locus of singular points of $S$}, denoted by
${\rm Sing}(S)$ is a discrete subset of $S$. It follows that each connected component $\widetilde{R}$ of $S\setminus {\rm Sing}(S)$ has structure of Riemann surface, and the points in ${\rm Sing}(S)$ define punctures on $\widetilde{R}$.  By adding these punctures, we obtain another Riemann surface $R$, containing $\widetilde{R}$, called an {\em irreducible component} of $S$. If $S$ has only one irreducible component then it is called {\em irreducible}; otherwise, it is called {\em reducible}.

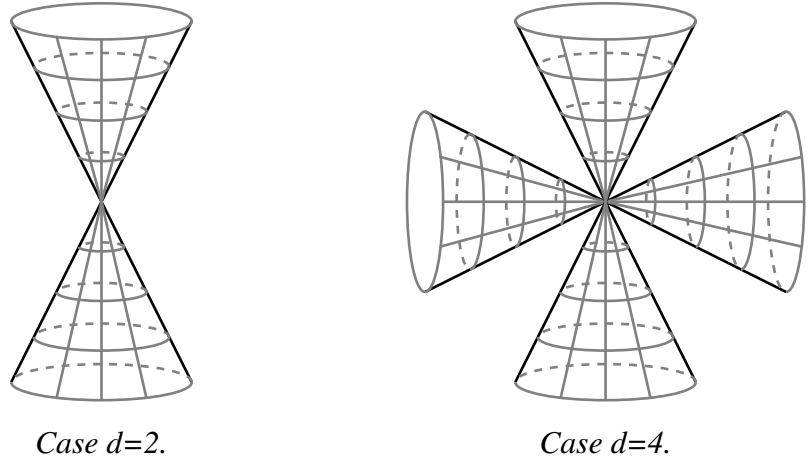
\begin{figure}[ht]
	\centering
	\begin{tabular}{ccc}	
    \begin{tikzpicture}[baseline=(current bounding box.north)]
		\begin{scope}[scale=0.6]
		\clip (-5,-5) rectangle (5,5);
		\draw [line width=1pt] (0,0) -- (-2,4);
		\draw [line width=1pt] (0,0) -- (2,4);
		\draw [line width=1pt] (0,0) -- (-2,-4);
		\draw [line width=1pt] (0,0) -- (2,-4);
		\draw [line width=1pt,black!50] (0,0) -- (0,3.6);
		\draw [line width=1pt,black!50] (0,0) -- (0,-4.4);
		\draw [line width=1pt,black!50] (0,0) -- (1,3.68);
		\draw [line width=1pt,black!50] (0,0) -- (-1,3.68);
		\draw [line width=1pt,black!50] (0,0) -- (1,-4.37);
		\draw [line width=1pt,black!50] (0,0) -- (-1,-4.37);
		\draw [line width=1pt,black!50] (0,4) ellipse (20mm and 4mm);
		\draw [line width=1pt,black!50] (1.5,3) arc
		[
		start angle=0,
		end angle=-180,
		x radius=15mm,
		y radius =3mm
		] ;
		\draw [dashed, line width=1pt,black!50] (1.5,3) arc
		[
		start angle=0,
		end angle=180,
		x radius=15mm,
		y radius =3mm
		] ;
		\draw [line width=1pt,black!50] (1,2) arc
		[
		start angle=0,
		end angle=-180,
		x radius=10mm,
		y radius =2mm
		] ;
		\draw [dashed, line width=1pt,black!50] (1,2) arc
		[
		start angle=0,
		end angle=180,
		x radius=10mm,
		y radius =2mm
		] ;
		\draw [line width=1pt,black!50] (0.5,1) arc
		[
		start angle=0,
		end angle=-180,
		x radius=5mm,
		y radius =1mm
		] ;
		\draw [dashed, line width=1pt,black!50] (0.5,1) arc
		[
		start angle=0,
		end angle=180,
		x radius=5mm,
		y radius =1mm
		] ;
		\draw [line width=1pt,black!50] (2,-4) arc
		[
		start angle=0,
		end angle=-180,
		x radius=20mm,
		y radius =4mm
		] ;
		\draw [dashed, line width=1pt,black!50] (2,-4) arc
		[
		start angle=0,
		end angle=180,
		x radius=20mm,
		y radius =4mm
		] ;
		\draw [line width=1pt,black!50] (1.5,-3) arc
		[
		start angle=0,
		end angle=-180,
		x radius=15mm,
		y radius =3mm
		] ;
		\draw [dashed, line width=1pt,black!50] (1.5,-3) arc
		[
		start angle=0,
		end angle=180,
		x radius=15mm,
		y radius =3mm
		] ;
		\draw [line width=1pt,black!50] (1,-2) arc
		[
		start angle=0,
		end angle=-180,
		x radius=10mm,
		y radius =2mm
		] ;
		\draw [dashed, line width=1pt,black!50] (1,-2) arc
		[
		start angle=0,
		end angle=180,
		x radius=10mm,
		y radius =2mm
		] ;
		\draw [line width=1pt,black!50] (0.5,-1) arc
		[
		start angle=0,
		end angle=-180,
		x radius=5mm,
		y radius =1mm
		] ;
		\draw [dashed, line width=1pt,black!50] (0.5,-1) arc
		[
		start angle=0,
		end angle=180,
		x radius=5mm,
		y radius =1mm
		] ;
		
		\end{scope}
		\end{tikzpicture}
  &&		\begin{tikzpicture}[baseline=(current bounding box.north)]
		\begin{scope}[scale=0.6]
		\clip (-5,-5) rectangle (5,5);
		\draw [rotate=90,line width=1pt] (0,0) -- (-2,4);
		\draw [rotate=90,line width=1pt] (0,0) -- (2,4);
		\draw [rotate=90,line width=1pt] (0,0) -- (-2,-4);
		\draw [rotate=90,line width=1pt] (0,0) -- (2,-4);
		\draw [rotate=90,line width=1pt,black!50] (0,0) -- (0,3.6);
		\draw [rotate=90,line width=1pt,black!50] (0,0) -- (0,-4.4);
		\draw [rotate=90,line width=1pt,black!50] (0,0) -- (1,3.68);
		\draw [rotate=90,line width=1pt,black!50] (0,0) -- (-1,3.68);
		\draw [rotate=90,line width=1pt,black!50] (0,0) -- (1,-4.37);
		\draw [rotate=90,line width=1pt,black!50] (0,0) -- (-1,-4.37);
		\draw [rotate=90,line width=1pt,black!50] (0,4) ellipse (20mm and 4mm);
		\draw [rotate=90,line width=1pt,black!50] (1.5,3) arc
		[
		start angle=0,
		end angle=-180,
		x radius=15mm,
		y radius =3mm
		] ;
		\draw [rotate=90,dashed, line width=1pt,black!50] (1.5,3) arc
		[
		start angle=0,
		end angle=180,
		x radius=15mm,
		y radius =3mm
		] ;
		\draw [rotate=90,line width=1pt,black!50] (1,2) arc
		[
		start angle=0,
		end angle=-180,
		x radius=10mm,
		y radius =2mm
		] ;
		\draw [rotate=90,dashed, line width=1pt,black!50] (1,2) arc
		[
		start angle=0,
		end angle=180,
		x radius=10mm,
		y radius =2mm
		] ;
		\draw [rotate=90,line width=1pt,black!50] (0.5,1) arc
		[
		start angle=0,
		end angle=-180,
		x radius=5mm,
		y radius =1mm
		] ;
		\draw [rotate=90,dashed, line width=1pt,black!50] (0.5,1) arc
		[
		start angle=0,
		end angle=180,
		x radius=5mm,
		y radius =1mm
		] ;
		\draw [rotate=90,line width=1pt,black!50] (2,-4) arc
		[
		start angle=0,
		end angle=-180,
		x radius=20mm,
		y radius =4mm
		] ;
		\draw [rotate=90,dashed, line width=1pt,black!50] (2,-4) arc
		[
		start angle=0,
		end angle=180,
		x radius=20mm,
		y radius =4mm
		] ;
		\draw [rotate=90,line width=1pt,black!50] (1.5,-3) arc
		[
		start angle=0,
		end angle=-180,
		x radius=15mm,
		y radius =3mm
		] ;
		\draw [rotate=90,dashed, line width=1pt,black!50] (1.5,-3) arc
		[
		start angle=0,
		end angle=180,
		x radius=15mm,
		y radius =3mm
		] ;
		\draw [rotate=90,line width=1pt,black!50] (1,-2) arc
		[
		start angle=0,
		end angle=-180,
		x radius=10mm,
		y radius =2mm
		] ;
		\draw [rotate=90,dashed, line width=1pt,black!50] (1,-2) arc
		[
		start angle=0,
		end angle=180,
		x radius=10mm,
		y radius =2mm
		] ;
		\draw [rotate=90,line width=1pt,black!50] (0.5,-1) arc
		[
		start angle=0,
		end angle=-180,
		x radius=5mm,
		y radius =1mm
		] ;
		\draw [rotate=90,dashed, line width=1pt,black!50] (0.5,-1) arc
		[
		start angle=0,
		end angle=180,
		x radius=5mm,
		y radius =1mm
		] ;
		\draw [line width=1pt] (0,0) -- (-2,4);
		\draw [line width=1pt] (0,0) -- (2,4);
		\draw [line width=1pt] (0,0) -- (-2,-4);
		\draw [line width=1pt] (0,0) -- (2,-4);
		\draw [line width=1pt,black!50] (0,0) -- (0,3.6);
		\draw [line width=1pt,black!50] (0,0) -- (0,-4.4);
		\draw [line width=1pt,black!50] (0,0) -- (1,3.68);
		\draw [line width=1pt,black!50] (0,0) -- (-1,3.68);
		\draw [line width=1pt,black!50] (0,0) -- (1,-4.37);
		\draw [line width=1pt,black!50] (0,0) -- (-1,-4.37);
		\draw [line width=1pt,black!50] (0,4) ellipse (20mm and 4mm);
		\draw [line width=1pt,black!50] (1.5,3) arc
		[
		start angle=0,
		end angle=-180,
		x radius=15mm,
		y radius =3mm
		] ;
		\draw [dashed, line width=1pt,black!50] (1.5,3) arc
		[
		start angle=0,
		end angle=180,
		x radius=15mm,
		y radius =3mm
		] ;
		\draw [line width=1pt,black!50] (1,2) arc
		[
		start angle=0,
		end angle=-180,
		x radius=10mm,
		y radius =2mm
		] ;
		\draw [dashed, line width=1pt,black!50] (1,2) arc
		[
		start angle=0,
		end angle=180,
		x radius=10mm,
		y radius =2mm
		] ;
		\draw [line width=1pt,black!50] (0.5,1) arc
		[
		start angle=0,
		end angle=-180,
		x radius=5mm,
		y radius =1mm
		] ;
		\draw [dashed, line width=1pt,black!50] (0.5,1) arc
		[
		start angle=0,
		end angle=180,
		x radius=5mm,
		y radius =1mm
		] ;
		\draw [line width=1pt,black!50] (2,-4) arc
		[
		start angle=0,
		end angle=-180,
		x radius=20mm,
		y radius =4mm
		] ;
		\draw [dashed, line width=1pt,black!50] (2,-4) arc
		[
		start angle=0,
		end angle=180,
		x radius=20mm,
		y radius =4mm
		] ;
		\draw [line width=1pt,black!50] (1.5,-3) arc
		[
		start angle=0,
		end angle=-180,
		x radius=15mm,
		y radius =3mm
		] ;
		\draw [dashed, line width=1pt,black!50] (1.5,-3) arc
		[
		start angle=0,
		end angle=180,
		x radius=15mm,
		y radius =3mm
		] ;
		\draw [line width=1pt,black!50] (1,-2) arc
		[
		start angle=0,
		end angle=-180,
		x radius=10mm,
		y radius =2mm
		] ;
		\draw [dashed, line width=1pt,black!50] (1,-2) arc
		[
		start angle=0,
		end angle=180,
		x radius=10mm,
		y radius =2mm
		] ;
		\draw [line width=1pt,black!50] (0.5,-1) arc
		[
		start angle=0,
		end angle=-180,
		x radius=5mm,
		y radius =1mm
		] ;
		\draw [dashed, line width=1pt,black!50] (0.5,-1) arc
		[
		start angle=0,
		end angle=180,
		x radius=5mm,
		y radius =1mm
		] ;

		\end{scope}
		\end{tikzpicture}
	\\
		 \emph{Case d=2.}                         &                 &  \emph{Case d=4.} \\
	\end{tabular}
	\caption{\emph{Neighborhood of a singular Riemann surface.}}
	\label{fig:titulo}
\end{figure}

\begin{remark}
If $S$ is a compact singular Riemann surface, then it follows the following properties:
\begin{itemize}
    \item[\textbf{(1)}] The locus of singular points ${\rm Sing}(S)$ is a finite set.
    \item[\textbf{(2)}] The singular Riemann surface $S$ has many finitely irreducible components.
    \item[\textbf{(3)}] Each irreducible component $R$ of $S$ is a compact Riemann surface and  $\widetilde{R}$ is also a Riemann surface having finite punctures.
\end{itemize}
\end{remark}

\begin{definition}[\cite{Hidalgo-Reyes-Vega}*{Subsection 2.2}]
Let us consider two singular Riemann surfaces $S_1$ and $S_2$. An homeomorphism $f:S_{1}\to S_{2}$ is called \textbf{isomorphism} if it satisfies the following two properties:
\begin{itemize}
    \item[\textbf{(1)}] The map $f$ sends the locus singular points of $S_1$ onto the locus singular points of $S_2$, \emph{i.e.}, $f({\rm Sing(S_1)})={\rm Sing}(S_2)$.
    \item[\textbf{(2)}] The restriction $f:S_1\setminus {\rm Sing}(S_1)\to S_2\setminus {\rm Sing}(S_2)$ is holomorphic.
\end{itemize}
\end{definition}
Two singular Riemann surfaces $S_{1}$ and $S_{2}$ are \emph{isomorphic} if there exists an isomophism $f:S_{1}\to S_{2}$. An \emph{automorphism} of a singular Riemann surface $S$ is an isomorphism of $S$ to itself. The automorphisms set of a singular Riemann surface $S$, which will be denoted by ${\rm Aut}(S)$ has a group structure with the composition operation.

\subsection{Fiber product of Riemann surfaces}\label{Subsection:fiber_product_riemann_surface} 

Now, we introduce the concept of fiber product over Riemann surfaces. Also we explore the irreducible components associated to the fiber product and we define the normal fiber product.

\begin{definition}[\cite{Hidalgo-Reyes-Vega}*{Subsection 2.3}]
Let us fix three Riemann surfaces (not necessarily compact) $S_{0}$, $S_{1}$ and $S_{2}$ and two surjective holomorphic maps $\beta_{1}:S_{1}\to S_{0}$ and $\beta_{2}:S_{2}\to S_{0}$. The \textbf{fiber product} associated to the pairs $(S_{1},\beta_{1})$ and $(S_{2},\beta_{2})$ is defined as
\[
S_{1}\times_{(\beta_{1},\beta_{2})}S_{2}:=\left\{(z_{1},z_{2})\in S_{1}\times S_{2}: \beta_{1}(z_{1})=\beta_{2}(z_{2})\right\},
\] endowed with the topology induced by the product topology of $S_1\times S_2$.
There is associated a natural continuous map
$
\beta:S_1\times_{(\beta_1,\beta_2)}S_2\to S_0$, such that
\begin{equation}\label{eq:cont_proj_prod_fib}
\beta=\beta_1\circ\pi_1=\beta_2\circ\pi_2,
\end{equation}
where $\pi_j:S_1\times_{(\beta_1,\beta_2)}S_2\to S_j$ is the projection map $\pi_j(z_1,z_2)=z_j$, for $j\in\{1,2\}$.
\end{definition}

The fiber product of the pairs $(S_{1},\beta_{1})$ and $(S_{2},\beta_{2})$ enjoys the following universal property. If $S$ is a topological space and, there are surjective holomorphic maps $p_{1}:S\to S_{1}$ and $p_{2}:S\to S_{2}$ such that $\beta_{1}\circ p_{1}=\beta_{2}\circ p_{2}$, then there exists a unique continuous map $F:S\to S_{1}\times_{(\beta_1,\beta_2)}S_{2}$ such that in  diagram (\ref{eq:univeral-property}), it satisfies $p_j=\pi_{j}\circ F$, for $j\in\{1,2\}$.
\begin{equation}\label{eq:univeral-property}
\begin{tikzcd}
S
\arrow[bend left]{drr}{p_2}
\arrow[bend right,swap]{ddr}{p_1}
\arrow[dashed]{dr}[description]{F} & & \\
& S_{1}\times_{(\beta_1,\beta_2)}S_{2} \arrow{dr}{\beta}\arrow{r}{\pi_2} \arrow{d}[swap]{\pi_1}
& S_{2} \arrow{d}{\beta_2} \\
& S_{1} \arrow[swap]{r}{\beta_1}
& S_{0}
\end{tikzcd}
\end{equation}

The fiber product is, up to homeomorphisms, the unique topological space satisfying the above property. For more details of the fiber product of topological spaces, we refer the reader to \cite{Harris}*{p. 30}.

The following result assures that the fiber product is a singular Riemann surface.

\begin{proposition}[\cite{Hidalgo-Reyes-Vega}*{Proposition 2.2}] \label{pro:RSA}

The fiber product $S_{1}\times_{(\beta_{1},\beta_{2})}S_{2}$ is a singular Riemann surface. Moreover, if $p=(z_1, z_2)\in S_{1}\times_{(\beta_{1},\beta_{2})}S_{2}$ and, let $n_j$ be the local degree of $\beta_j$ at $z_j$ (for $j=1, 2$) and let $d$ be the greatest common divisor of $n_1$ and $n_2$, then the point $p$ is a singular point if and only if $d\geq 2$, in which case, it has a neighborhood of the form $V_{n_1,n_2}$.
\end{proposition}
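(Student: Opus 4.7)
The plan is to verify the proposition by producing explicit local models at every point of the fiber product, using the standard normal form for non-constant holomorphic maps between Riemann surfaces. Fix $p=(z_1,z_2)\in S_1\times_{(\beta_1,\beta_2)}S_2$ and set $q:=\beta_1(z_1)=\beta_2(z_2)\in S_0$. The first step is to pick a chart $(V,u)$ on $S_0$ centered at $q$ with $u(q)=0$, and then to choose charts $(U_j,w_j)$ on $S_j$ centered at $z_j$, shrinking $V$ if necessary, so that in these coordinates each $\beta_j$ has the normal form $u\circ\beta_j\circ w_j^{-1}(w)=w^{n_j}$, where $n_j$ is the local degree of $\beta_j$ at $z_j$. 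This is just the standard local description of a non-constant holomorphic map at a point of ramification index $n_j$.

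Next, I would identify the corresponding neighborhood of $p$ inside the fiber product. Taking $U_1\times U_2\cap (S_1\times_{(\beta_1,\beta_2)}S_2)$ and transporting via $(w_1,w_2)$, the defining equation $\beta_1(z_1)=\beta_2(z_2)$ becomes $w_1^{n_1}=w_2^{n_2}$. After shrinking to small discs, this neighborhood is exactly the model
\[
V_{n_1,n_2}=\{(w_1,w_2)\in\Delta\times\Delta:\; w_1^{n_1}=w_2^{n_2}\}
\]
appearing in the definition of singular Riemann surface; the identification is bi-holomorphic on the complement of $(0,0)$ and a homeomorphism globally, so the fiber product carries the required local structure at $p$. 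Doing this uniformly over all $p$ supplies the singular Riemann surface structure.

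For the second assertion, I would apply the factorization recalled in the introduction: writing $d=\gcd(n_1,n_2)$, $n_j=d\widehat{n}_j$ with $\gcd(\widehat{n}_1,\widehat{n}_2)=1$, and $\omega$ a primitive $d$-th root of unity, one has
\[
w_1^{n_1}-w_2^{n_2}=\prod_{k=0}^{d-1}\bigl(w_1^{\widehat{n}_1}-\omega^{k}w_2^{\widehat{n}_2}\bigr).
\]
Each factor cuts out a smooth analytic branch through $(0,0)$, parametrized by $t\mapsto(t^{\widehat{n}_2},\omega^{k\widehat{n}_2^{-1}\bmod \widehat{n}_1}t^{\widehat{n}_1})$ thanks to coprimality; the branches meet transversally only at the origin. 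Hence if $d=1$ there is a single branch and $V_{n_1,n_2}$ is biholomorphic to $\Delta$, so $p$ is a smooth point of the fiber product; whereas if $d\geq 2$ there are $d$ distinct local branches meeting at $p$, and $p$ is genuinely singular with a neighborhood of the claimed form $V_{n_1,n_2}$.

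The most delicate step I anticipate is the very first one: justifying that we can simultaneously choose the coordinates on $S_0$, $S_1$, $S_2$ so that \emph{both} $\beta_1$ and $\beta_2$ acquire power-map normal forms with respect to the \emph{same} coordinate $u$ on $S_0$, while also ensuring the local fiber product is exactly $V_{n_1,n_2}$ (not merely a set-theoretic bijection, but an analytic isomorphism off the singular point). Once the coordinates are chosen by first fixing $u$ on $V\subset S_0$ and only then solving for local holomorphic charts $w_j$ on $S_j$ via the normal form theorem, everything else is a routine translation; the singular-versus-smooth dichotomy is then a purely algebraic statement about the curve $w_1^{n_1}=w_2^{n_2}$.
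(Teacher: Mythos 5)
Your argument is essentially correct, and it is the standard proof of this fact; note that the paper itself does not prove this proposition but quotes it as Proposition 2.2 of Hidalgo--Reyes--Carocca--Vega, so there is no internal proof to compare against. Your two-step scheme (fix a coordinate $u$ at $q\in S_0$ first, then invoke the local normal form to get coordinates $w_j$ at $z_j$ with $u\circ\beta_j\circ w_j^{-1}(w)=w^{n_j}$, so that the fiber product near $p$ becomes exactly $\{w_1^{n_1}=w_2^{n_2}\}$, followed by the factorization into $d$ branches) is precisely the intended route, and your worry about choosing both normal forms relative to the same target coordinate is resolved exactly as you say, since the normal form theorem allows the target chart to be prescribed in advance. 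Two cosmetic slips: the constant in your branch parametrization is off --- one needs $t\mapsto(t^{\widehat{n}_2},c\,t^{\widehat{n}_1})$ with $c^{\widehat{n}_2}=\omega^{-k}$, and since $\widehat{n}_2$ need not be invertible modulo $d$ one should simply take any $\widehat{n}_2$-th root of $\omega^{-k}$ rather than a power of $\omega$ with an inverse taken mod $\widehat{n}_1$; and the $d$ branches need not meet \emph{transversally} at the origin (they can be mutually tangent, e.g.\ when $\widehat{n}_1=1$ and $\widehat{n}_2\geq 2$) --- what matters, and what is true, is that they meet only at the origin. Neither issue affects the validity of the smooth-versus-singular dichotomy you derive.
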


The locus of singular points ${\rm Sing}(S_{1}\times_{(\beta_{1},\beta_{2})}S_{2})$ are pairs $(z_{1},z_{2})$, which $z_{i}$ is a ramification point of $S_{i}$, with $i\in\{1,2\}$.
On the other hand, the space 
\[
S_{1}\times_{(\beta_{1},\beta_{2})}S_{2}\setminus {\rm Sing}(S_1\times_{(\beta_1,\beta_2)}S_2),
\]
consists of a collection $\left\{\widetilde{R}_{\alpha}:\alpha\in\mathcal{A}\right\}$ of connected Riemann surfaces, for any family of indices $\mathcal{A}$. These surfaces satisfy the following properties:

\begin{itemize}
    \item[\textbf{(1)}] If the locus of singular points is a finite set, then the collection of Riemann surfaces $\left\{\widetilde{R}_{\alpha}:\alpha\in\mathcal{A}\right\}$ is finite.
    
    \item[\textbf{(2)}] The map $\beta$ (described in (\ref{eq:cont_proj_prod_fib})) restricts to $\widetilde{R}_{\alpha}$ not necessarily is a surjective holomorphic map.  
    
    \item[\textbf{(3)}] The Riemann surface $\widetilde{R}_{\alpha}$ has a collection of punctures, associated to the points in ${\rm Sing}(S_1\times_{(\beta_1,\beta_2)}S_2)$, and by filling in these points, we obtain a unique, up to biholomorphism, irreducible component $R_{\alpha}$  of the fiber product, and a surjective holomorphic map $\beta:R_{\alpha}\to S_0$ extending.     
\end{itemize}

\begin{definition}
The union of all the irreducible components is said the \textbf{normal fiber product}, which is denoted by $   \widetilde{S_1\times_{(\beta_1, \beta_2)} S_2}$, this is the normalization of the fiber product $S_1\times_{(\beta_1, \beta_2)} S_2$ when it is considered as a complex algebraic variety.
\end{definition}

 From the previous definition, there is a one-to-one correspondence between components of the normal fiber product and the irreducible components of the fiber product.

Remark that if $B\subset S_{0}$ is the discrete subset consisting of the union of the branch values of $\beta_1$ and $\beta_2$, and $S_0^*=S_0\setminus B$, $S_1^*=S_1\setminus \beta_1^{-1}(B)$ and $S_2^*=S_2\setminus \beta_2^{-1}(B)$, then
$$S_1^*\times_{(\beta_1, \beta_2)} S_2^*=\bigcup\limits_{\alpha\in\mathcal{A}}R_{\alpha}^*\subset \left(S_1\times_{(\beta_1, \beta_2)} S_2 \setminus {\rm Sing}(S_1\times_{(\beta_1,\beta_2)}S_2)\right)=\bigcup\limits_{\alpha\in\mathcal{A}}\widetilde{R}_{\alpha}\subset \bigcup\limits_{\alpha\in\mathcal{A}}R_{\alpha},$$ 
where $R_{\alpha}^*$ is a connected Riemann surface, this being complement in $\widetilde{R}_{\alpha}$ of a discrete set of points.

The following fact is consequence of the universal property of the fiber product and the above description.

\begin{lemma}[\cite{Hidalgo-Reyes-Vega}*{Lemma 2.5}]
If $S$ is a connected Riemann surface and $p_j:S\to S_j^*$ are surjective holomorphic maps so that $\beta_1\circ p_1=\beta_2\circ p_2$, then there exists an irreducible component $R_{\alpha}^*$  and a holomorphic map $h:S\to R_{\alpha}^*$ so that $p_j=\pi_j\circ h$, where $\pi_j:S_1\times_{(\beta_1,\beta_2)}S_2\to S_j$ is the projection map $\pi_j(z_1,z_2)=z_j$, for $j\in\{1,2\}$.
\end{lemma}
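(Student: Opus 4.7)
The plan is to apply the topological universal property of the fiber product recalled after diagram (\ref{eq:univeral-property}) to obtain a continuous candidate, confine its image to a single irreducible component, and finally upgrade it to a holomorphic map via Proposition \ref{pro:RSA}.

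Concretely, I would define $F: S \to S_1 \times_{(\beta_1, \beta_2)} S_2$ by $F(s) := (p_1(s), p_2(s))$, which takes values in the fiber product by the hypothesis $\beta_1 \circ p_1 = \beta_2 \circ p_2$ and is continuous because both $p_j$ are; by construction $\pi_j \circ F = p_j$. Next I would locate the image: since $p_j(S) \subset S_j^{*}$,
\[
F(S) \;\subset\; S_1^{*} \times_{(\beta_1, \beta_2)} S_2^{*} \;=\; \bigcup_{\alpha \in \mathcal{A}} R_\alpha^{*}.
\]
The sets $R_\alpha^{*}$ are pairwise disjoint open subsets of the fiber product, each being open in $\widetilde{R}_\alpha$, and the $\widetilde{R}_\alpha$ being the pairwise disjoint connected components of the smooth locus $S_1\times_{(\beta_1,\beta_2)}S_2 \setminus {\rm Sing}(S_1\times_{(\beta_1,\beta_2)}S_2)$. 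Continuity of $F$ together with connectedness of $S$ therefore forces $F(S) \subset R_\alpha^{*}$ for a single $\alpha \in \mathcal{A}$; I set $h := F$ with codomain restricted to this $R_\alpha^{*}$, and clearly $\pi_j \circ h = p_j$.

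It remains to promote $h$ from continuous to holomorphic, which is the step I expect to be the main obstacle. For every $(z_1,z_2) \in R_\alpha^{*}$, the branch values of $\beta_1$ and $\beta_2$ have been excised, so $\beta_j$ has local degree one at $z_j$; Proposition \ref{pro:RSA} (with $n_1 = n_2 = 1$ and $d = 1$) then gives a neighborhood of $(z_1, z_2)$ in the fiber product holomorphically equivalent to $V_{1,1} \cong \Delta$, on which $\pi_1$ reads as a biholomorphism onto a neighborhood of $z_1$ in $S_1^{*}$. Thus $\pi_1|_{R_\alpha^{*}}: R_\alpha^{*} \to S_1^{*}$ is a local biholomorphism, and near any point of $h(S)$ we can write $h = \pi_1^{-1} \circ p_1$; holomorphy being a local property, $h$ is holomorphic. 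The preliminary excision of the branch values is precisely what forces the integer $d$ of Proposition \ref{pro:RSA} to equal $1$, so the whole upgrade hinges on working exclusively over the unramified locus.
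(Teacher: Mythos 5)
Your proposal is correct and follows exactly the route the paper indicates for this cited lemma (it is stated there as "a consequence of the universal property of the fiber product and the above description"): you use the universal property to get the continuous map $F=(p_1,p_2)$, observe that its image lies in $S_1^{*}\times_{(\beta_1,\beta_2)}S_2^{*}=\bigcup_{\alpha}R_{\alpha}^{*}$ and, by connectedness of $S$ and disjointness of the open sets $R_{\alpha}^{*}$, in a single $R_{\alpha}^{*}$, and then upgrade to holomorphy via the local biholomorphicity of $\pi_1$ over the unramified locus (Proposition \ref{pro:RSA} with $d=1$). This fills in the details in the expected way, so no discrepancy with the paper's argument.
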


For the case that both holomorphic maps $\beta_{j}$ are of finite degree, the following result provides an upper bound on the number of irreducible components of the fiber product.

\begin{proposition}[\cite{Hidalgo-Reyes-Vega}*{Proposition 2.6}]
If $\beta_1$ and $\beta_2$ both have finite degree, then the number of irreducible components of the fiber product of the two pairs $(S_1, \beta_1)$ and $(S_2, \beta_2)$ is at most the greatest common divisor of the degrees of $\beta_1$ and $\beta_2$.
\end{proposition}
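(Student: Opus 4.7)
The plan is to bound the number of connected components of the fiber product over the unbranched locus by an elementary degree count. Set $n = \deg(\beta_1)$, $m = \deg(\beta_2)$, and $d = \gcd(n,m)$. Following the notation introduced just above the statement, let $B \subset S_0$ be the (discrete) union of the branch values of $\beta_1$ and $\beta_2$, and put $S_0^{*} = S_0 \setminus B$ and $S_i^{*} = S_i \setminus \beta_i^{-1}(B)$. Since the irreducible components $R_\alpha$ of the fiber product are in bijection with the connected components $R_\alpha^{*}$ of $S_1^{*} \times_{(\beta_1, \beta_2)} S_2^{*}$, it suffices to bound $|\mathcal{A}|$.

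Over $S_0^{*}$ the maps $\beta_1$ and $\beta_2$ are unbranched holomorphic coverings of degrees $n$ and $m$, respectively. The induced map $\beta: S_1^{*} \times_{(\beta_1,\beta_2)} S_2^{*} \to S_0^{*}$ is therefore also a covering map; its fiber over $p \in S_0^{*}$ is $\beta_1^{-1}(p) \times \beta_2^{-1}(p)$, so the total degree equals $nm$. Restricted to a connected component $R_\alpha^{*}$, the map $\beta: R_\alpha^{*} \to S_0^{*}$ is a covering of some finite degree $e_\alpha \geq 1$, and
\[
\sum_{\alpha \in \mathcal{A}} e_\alpha = nm.
\]

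The key step is to argue that each projection $\pi_i|_{R_\alpha^{*}}: R_\alpha^{*} \to S_i^{*}$ is a finite surjective covering. The map $\pi_i$ is itself a covering, being the pullback of $\beta_{3-i}$ along $\beta_i$. Its restriction to a connected component $R_\alpha^{*}$ is locally a homeomorphism, and a standard covering-space argument shows that the image $\pi_i(R_\alpha^{*})$ is both open (since $\pi_i$ is open) and closed in $S_i^{*}$. Because $S_i$ is connected and $\beta_i^{-1}(B)$ is discrete, $S_i^{*}$ is connected, forcing $\pi_i(R_\alpha^{*}) = S_i^{*}$. From the identity $\beta = \beta_i \circ \pi_i$ one then deduces
\[
e_\alpha = n \cdot \deg(\pi_1|_{R_\alpha^{*}}) = m \cdot \deg(\pi_2|_{R_\alpha^{*}}),
\]
so $e_\alpha$ is a positive common multiple of $n$ and $m$, whence $e_\alpha \geq \mathrm{lcm}(n,m) = nm/d$.

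Combining the two displayed equations gives
\[
nm = \sum_{\alpha \in \mathcal{A}} e_\alpha \;\geq\; |\mathcal{A}| \cdot \frac{nm}{d},
\]
and therefore $|\mathcal{A}| \leq d$, as required. The main obstacle is the surjectivity of $\pi_i|_{R_\alpha^{*}}$: once the image is shown to be clopen in the connected surface $S_i^{*}$, the rest is bookkeeping of degrees.
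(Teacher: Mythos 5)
Your argument is correct. Note that the paper itself does not prove this statement: it is quoted from \cite{Hidalgo-Reyes-Vega}*{Proposition 2.6}, and the standard proof there is the monodromy version of exactly your count (components of the fiber product over the unbranched locus correspond to orbits of the product monodromy action on a fiber $F_1\times F_2$, each orbit having size a common multiple of $n$ and $m$). Your covering-degree formulation --- restricting to $S_0^{*}$, showing each component $R_\alpha^{*}$ covers $S_0^{*}$ with degree $e_\alpha=n\deg(\pi_1|_{R_\alpha^{*}})=m\deg(\pi_2|_{R_\alpha^{*}})\geq \mathrm{lcm}(n,m)$, and summing to $nm$ --- is the same idea expressed in covering-space language, and it is complete.
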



\section{Ends space of the fiber product over Riemann surfaces}\label{sec:ends_spaces_fiber_product_riemann_surfaces}

Let  $S_{0}$, $S_{1}$ and $S_{2}$ be Riemann surfaces (not necessarily compact), let $\beta_{1}:S_{1}\to S_{0}$ and $\beta_{2}:S_{2}\to S_{0}$ be surjective holomorphic maps. The fiber product $S_{1}\times_{(\beta_{1},\beta_{2})}S_{2}$ is a subspace of the product space $S_{1}\times S_{2}$ inheriting the properties: Hausdorff, second countable, locally compact and locally connected. It follows that we can associate to each connected component of the fiber product $S_1\times_{(\beta_1, \beta_2)} S_2$ its respective ends space.  Recall that if the locus of singular points of $S_1\times_{(\beta_1, \beta_2)} S_2$ is finite, then the number of irreducible components $R_{1},\ldots, R_{k}$ of the fiber product is also finite, for any $k\in\mathbb{N}$. Given that such irreducible components are Riemann surfaces, the ends space of the normalization of the fiber product is
\[
 {\rm Ends}(\widetilde{S_1\times_{(\beta_1, \beta_2)} S_2})=\bigcup\limits_{i=1}^{k}{\rm Ends}(R_{i}).
\]

The following result describes  the ends space of a connected fiber product over Riemann surfaces denoted as ${\rm Ends}(S_{1}\times_{(\beta_{1},\beta_{2})}S_{2})$,  from the point of view of the ends space of its respective irreducible components, when its locus of singular points is finite. 

\begin{theorem}\label{teo1}
 Suppose that the fiber product $S_1\times_{(\beta_1, \beta_2)} S_2$ is connected and its locus of singular points is finite, then ${\rm Ends}(S_1\times_{(\beta_1, \beta_2)} S_2)$ is homeomorphic to ${\rm Ends}(\widetilde{S_1\times_{(\beta_1, \beta_2)} S_2})$.
\end{theorem}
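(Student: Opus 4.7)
The plan is to exhibit a homeomorphism between $\mathrm{Ends}(S_1\times_{(\beta_1,\beta_2)}S_2)$ and $\mathrm{Ends}(\widetilde{S_1\times_{(\beta_1,\beta_2)}S_2})$ via the normalization map $\nu:\widetilde{F}\to F$, where I write $F:=S_1\times_{(\beta_1,\beta_2)}S_2$ for brevity. The essential observation is that, by Proposition \ref{pro:RSA}, each $p\in\mathrm{Sing}(F)$ has a neighborhood of the form $V_{n_1,n_2}$ whose preimage under $\nu$ is the disjoint union of $d\geq 2$ holomorphic discs, each mapping homeomorphically onto a branch at $p$; away from $\mathrm{Sing}(F)$, the map $\nu$ is a homeomorphism. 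So the only obstruction to $\nu$ being a global homeomorphism is concentrated on the finite set $\mathrm{Sing}(F)=\{p_1,\dots,p_m\}$.

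First, I would fix, for each $p_i$, a relatively compact open neighborhood $W_i\subset F$ of the form $V_{n_i,m_i}$, chosen so that the $\overline{W_i}$ are pairwise disjoint and $\nu^{-1}(W_i)$ is a disjoint union of $d_i$ open discs in $\widetilde{F}$. Set $K:=\bigcup_{i=1}^m \overline{W_i}$, a compact subset of $F$, and $\widetilde{K}:=\nu^{-1}(K)$, compact in $\widetilde{F}$. By construction, $\nu$ restricts to a homeomorphism
\[
\nu\colon \widetilde{F}\setminus \widetilde{K}\;\longrightarrow\; F\setminus K.
\]
The point is now that ends of a space depend only on the complement of any sufficiently large compact set: given any end $\mathfrak{e}=[U_n]_{n\in\mathbb{N}}$ of $F$, condition \textbf{(3)} of Definition \ref{definition:end} lets me pass to a subsequence so that $U_n\cap K=\emptyset$ for every $n$. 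Then $\widetilde{U}_n:=\nu^{-1}(U_n)$ is a nested sequence of non-empty connected open subsets of $\widetilde{F}$ (connectedness being preserved because $\nu$ is a homeomorphism on the relevant region), and the three defining conditions for an end transfer verbatim since $\partial \widetilde{U}_n=\nu^{-1}(\partial U_n)$ is compact, $\bigcap_n\overline{\widetilde{U}_n}=\emptyset$, and any compact $\widetilde{L}\subset \widetilde{F}$ has compact image $\nu(\widetilde{L})\subset F$. This assignment descends to equivalence classes (equivalent representatives lift to equivalent representatives), yielding a well-defined map $\nu^{\ast}\colon \mathrm{Ends}(F)\to\mathrm{Ends}(\widetilde{F})$.

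For the inverse, any end of $\widetilde{F}$ is represented (again after truncation) by a nested sequence $\widetilde{V}_n\subset \widetilde{F}\setminus\widetilde{K}$ lying in a single irreducible component $R_\alpha$ (by connectedness), and $V_n:=\nu(\widetilde{V}_n)$ gives a nested sequence in $F\setminus K$ satisfying the three axioms, since on this region $\nu$ is a homeomorphism. This produces the inverse to $\nu^{\ast}$, so $\nu^{\ast}$ is a bijection. For continuity, it suffices to check the correspondence on the basis of the topology on ends: if $U\subset F$ is open with compact boundary, I may after thickening assume either $U\supset K$ or $U\cap K=\emptyset$; in either case $\nu^{-1}(U)$ is open in $\widetilde{F}$ with compact boundary, and $(\nu^{\ast})^{-1}(U^\ast)=\nu^{-1}(U)^\ast$ up to finitely many ends lying over the components of $U$ that contain no singular point, which is routine. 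Continuity of the inverse is the same argument with the roles of $U$ and $\nu^{-1}(U)$ exchanged.

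The main technical obstacle I anticipate is the last step: keeping track of how basic opens transform when $U$ happens to contain a singular point, since $\nu^{-1}(U)$ may then fail to be connected. I expect to handle this by exploiting the freedom to shrink basic opens to ones that either avoid $K$ entirely, or contain it as an interior compact piece so that the finitely many singular disc-neighborhoods do not affect the end class. Since $\mathrm{Sing}(F)$ is finite and ends are insensitive to any compact modification, this reduction is essentially automatic and the theorem follows.
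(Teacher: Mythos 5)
Your proposal is correct and is essentially the paper's own argument: both exploit that an end's defining nested sequence eventually avoids the finite singular locus, so end representatives can be transported between the fiber product and its normalization (the paper's map sends an end of an irreducible component $R_i$ to the ``common end'' determined by the inclusion $\widetilde{R}_i\subset S_1\times_{(\beta_1,\beta_2)}S_2$, which is exactly the inverse of your $\nu^{\ast}$). The only soft spot is your continuity step: instead of the unjustified reduction to $U\supset K$ or $U\cap K=\emptyset$, it suffices to observe that if $\nu^{-1}(U_m)\subset \widetilde{U}$ for some term of a representative $(U_n)_{n\in\mathbb{N}}$, then $U_m^{\ast}\subset(\nu^{\ast})^{-1}(\widetilde{U}^{\ast})$; this is how the paper argues one direction, after which continuity of the inverse comes for free from compactness of the ends space and the Hausdorff property.
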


The following remark is necessary to the proof of the result above.

\begin{remark}\label{ob3.2} 
For the fiber product $S_{1}\times_{(\beta_{1},\beta_{2})}S_{2}$ the following properties holds:

\begin{itemize}
    \item[\textbf{(1)}] As the locus set of singular points ${\rm Sing}(S_1\times_{(\beta_1, \beta_2)} S_2)$ is a finite compact space, the subset 
\begin{equation*}\label{eq:singular_irreducible_componnent}
{\rm Sing}(R_{i}):=R_{i}\setminus \widetilde{R}_{i}\subset R_{i},
\end{equation*}
is a finite compact space, for each $i\in\{1,\ldots,k\}$.

   \item[\textbf{(2)}] If we consider the end $[U_{n}]_{n\in\mathbb{N}}$ in the irreducible component $R_{i}$ and the compact space ${\rm Sing}(R_{i})$, using  Definition \ref{definition:end}, there exists $s\in\mathbb{N}$ such that ${\rm Sing}(R_{i})\cap U_{s}=\emptyset$. We can suppose without loss of generality that $s=1$. This implies that ${\rm Sing}(R_{i})\cap U_{n}=\emptyset$ or, equivalent $U_{n}\subset \widetilde{R}_{i}=R_{i}\setminus {\rm Sing}(R_{i})$, for all $n\in\mathbb{N}$.
   
   \item[\textbf{(3)}] Given that the Riemann surface $\widetilde{R}_{i}$ is a subspace of the fiber product $S_1\times_{(\beta_1, \beta_2)} S_2$, then the infinite nested sequence $(U_{n})_{n\in\mathbb{N}}$ of $\widetilde{R}_{i}$ is also an infinite nested sequence of the fiber product $S_1\times_{(\beta_1, \beta_2)} S_2$, which defines the \textbf{common end} (this nomenclature appears in \cite{RamVal}*{Remark 3.6}) 
\begin{equation*}\label{eq:map_F}
[\widetilde{U}_{n}]_{n\in\mathbb{N}}\in {\rm Ends}(S_1\times_{(\beta_1, \beta_2)} S_2),
\end{equation*}
such that the sequence $(U_{n})_{n\in\mathbb{N}}$ belongs to the class $[\widetilde{U}_{n}]_{n\in\mathbb{N}}$. 

   \item[\textbf{(4)}] If the infinite nested sequences $(V_{n})_{n\in\mathbb{N}}$ and $(U_{n})_{n\in\mathbb{N}}$ of $R_{i}$ are equivalent, then by the above construction, these infinite sequences define their respective common ends $[\widetilde{V}_{n}]_{n\in\mathbb{N}}$ and $[\widetilde{U}_{n}]_{n\in\mathbb{N}}$ in the fiber product  $S_1\times_{(\beta_1, \beta_2)} S_2$ such that $[\widetilde{V}_{n}]_{n\in\mathbb{N}}=[\widetilde{U}_{n}]_{n\in\mathbb{N}}$.
\end{itemize}

\end{remark}

With this observation we can star the proof of Theorem \ref{teo1}.

\begin{proof}
Let us star defining the map
\[
F:\bigcup\limits_{i=1}^{k}{\rm Ends}(R_{i})\to {\rm Ends}(S_1\times_{(\beta_1, \beta_2)} S_2),
\]
such that it sends the end $[U_{n}]_{n\in\mathbb{N}}\in \bigcup\limits_{i=1}^{k}{\rm Ends}(R_{i})$ to the common end $[\widetilde{U}_{n}]_{n\in\mathbb{N}}\in {\rm Ends}(S_1\times_{(\beta_1, \beta_2)} S_2)$ (see \textbf{(3)} in Remark \ref{ob3.2}). By \textbf{(4)} in  Remark \ref{ob3.2}, it follows that $F$ is a well-defined map.\\

\noindent \textbf{Injectivity.}  Let us consider the different ends $[U_{n}]_{n\in\mathbb{N}}$ and $[V_{n}]_{n\in\mathbb{N}}$ in $\bigcup\limits_{i=1}^{k}{\rm Ends}(R_{i})$. We must prove that $[\widetilde{U}_{n}]_{n\in\mathbb{N}}=F([U_{n}]_{n\in\mathbb{N}})$ is different to $ F([V_{n}]_{n\in\mathbb{N}})=[\widetilde{V}_{n}]_{n\in\mathbb{N}}$.
We study the following cases:

\vspace{1.5mm}

\noindent \emph{Case 1.} The ends are in different irreducible components, \emph{i.e.}, there are $k_{1}\neq  k_{2}\in\{1,\ldots,k\}$ such that $[U_{n}]_{n\in\mathbb{N}}\in {\rm Ends}(R_{k_1})$ and $[V_{n}]_{n\in\mathbb{N}}\in {\rm Ends}(R_{k_2})$. From \textbf{(1)} in Remark \ref{ob3.2} we hold that ${\rm Sing}(R_{k_i})$
is a finite compact subset of $R_{k_{i}}$, for each $i\in\{1,2\}$. Using \textbf{(2)} in the preceding remark we obtain that for all $n\in\mathbb{N}$,
\[
{\rm Sing}(R_{k_1})\cap U_{n}=\emptyset\quad \text{and}\quad {\rm Sing}(R_{k_2})\cap V_{n}=\emptyset.
\]
By hypothesis the classes $[U_{n}]_{n\in\mathbb{N}}$ and $[V_{n}]_{n\in\mathbb{N}}$ are different, using Remark \ref{remarK:different-classes} we obtain that $U_{1}\cap V_{1}=\emptyset$. As the infinite nested sequences $(U_{n})_{n\in\mathbb{N}}$ and $(V_{n})_{n\in\mathbb{N}}$ are belonged to the class $[\widetilde{U}_{n}]_{n\in\mathbb{N}}$ and $[\widetilde{V}_{n}]_{n\in\mathbb{N}}$ of ${\rm Ends}(S_1\times_{(\beta_1, \beta_2)} S_2)$, respectively, then there are  $m,l\in\mathbb{N}$ such that $\widetilde{U}_{l}\cap \widetilde{V}_{m}=\emptyset$.

\vspace{1.5mm}
\noindent \emph{Case 2.} The ends are in the same irreducible component, \emph{i.e.}, there is $k'\in\{1,\ldots,k\}$ such that the ends $[U_{n}]_{n\in\mathbb{N}},[V_{n}]_{n\in\mathbb{N}}\in{\rm Ends}(R_{k'})$. Since these ends are different, from Remark \ref{remarK:different-classes} it follows that
\begin{equation}\label{eq:intersection_ends}
U_{1}\cap V_{1}=\emptyset.   
\end{equation}
From \textbf{(1)} in Remark \ref{ob3.2} we hold that ${\rm Sing}(R_{k'})$
is a finite compact subset of $R_{k'}$. Using \textbf{(2)} in the preceding remark we obtain that for all $n\in\mathbb{N}$,
\begin{equation}\label{eq:intersection_singular}
{\rm Sing}(R_{k'})\cap U_{n}=\emptyset\quad \text{and}\quad {\rm Sing}(R_{k'})\cap V_{n}=\emptyset.
\end{equation}
As the infinite nested sequences $(U_{n})_{n\in\mathbb{N}}$ and $(V_{n})_{n\in\mathbb{N}}$ are belonged to the class $[\widetilde{U}_{n}]_{n\in\mathbb{N}}$ and $[\widetilde{V}_{n}]_{n\in\mathbb{N}}$ of ${\rm Ends}(S_1\times_{(\beta_1, \beta_2)} S_2)$, respectively, from equations  (\ref{eq:intersection_ends}) and (\ref{eq:intersection_singular}) it follows that there are $l,m\in\mathbb{N}$ such that $\widetilde{U}_{l}\cap\widetilde{V}_{m}=\emptyset$. It proves that the map $F$ is injective.\\

\noindent \textbf{Surjectivity.} We take an end $[\widetilde{U}_{n}]_{n\in\mathbb{N}}$ of $ S_1\times_{(\beta_1, \beta_2)} S_2$, then we must prove that there exists at least one end   $[U_{n}]_{n\in\mathbb{N}}\in {\rm Ends}(R_{k'})$, for any  $k'\in\{1,\ldots,k\}$, such that $F([U_{n}]_{n\in\mathbb{N}})=[\widetilde{U}_{n}]_{n\in\mathbb{N}}$. From Definition \ref{definition:end}, we can assume that ${\rm Sing}(S_1\times_{(\beta_1, \beta_2)} S_2)\cap \widetilde{U}_{n}=\emptyset$, for all $n\in\mathbb{N}$. As the open subset $\widetilde{U}_{n}\subset S_1\times_{(\beta_1, \beta_2)} S_2$ is connected, then the infinite nested sequence  $(\widetilde{U}_{n})_{n\in\mathbb{N}}$ belongs to any connected component of
\[
S_1\times_{(\beta_1, \beta_2)} S_2/{\rm Sing}(S_1\times_{(\beta_1, \beta_2)} S_2)=\bigcup\limits_{i=1}^{k} \widetilde{R}_{i},
\]
it means, there is $k'\in\{1,\ldots,k\}$ such that $(\widetilde{U}_{n})_{n\in\mathbb{N}}$ is an infinite nested sequence of $\widetilde{R}_{k'}$, which defines the end $[U_{n}]$ of $R_{k'}$ such that $(\widetilde{U}_{n})_{n\in\mathbb{N}}$ is in the class $[U_{n}]_{n\in\mathbb{N}}$ (see \textbf{(3)} in Remark \ref{ob3.2})). Finally, by the definition of the map $F$ we obtain $F([U_{n}]_{n\in\mathbb{N}})=[\widetilde{U}_{n}]_{n\in\mathbb{N}}$.\\

\noindent \textbf{Continuity.} We consider an end $[U_{n}]_{n\in\mathbb{N}}\in {\rm Ends}(R_{k'})$, for any $k'\in\{1,\ldots,k\}$,  and the open subset $W^{\ast}$ of ${\rm Ends}(S_1\times_{(\beta_1, \beta_2)} S_2)$ such that $F([U_{n}]_{n\in\mathbb{N}})=[\widetilde{U}_{n}]_{n\in\mathbb{N}}$, for any non-empty connected open subset $W$ of $ S_1\times_{(\beta_1, \beta_2)} S_2$, with compact boundary. Then we must prove that there is an open subset $V^{\ast}\subset {\rm Ends}(R_{k'})$ having the end $[U_{n}]_{n\in\mathbb{N}}$ such that $F(V^{\ast})\subset W^{\ast}$. 

From the surjectivity of the map $F$, it follows that $(\widetilde{U}_{n})_{n\in\mathbb{N}}$ is an infinite nested sequence of $R_{k'}$, which is belonged to the class $[U_{n}]_{n\in\mathbb{N}}\in R_{k'}$, for any $k'\in\{1,\ldots,k\}$. By hypothesis, the class $[\widetilde{U}_{n}]_{n\in\mathbb{N}}$ is in the open $W^{\ast}$, it means that $\widetilde{U}_{m}\subset W$ for any  $m\in\mathbb{N}$, as the infinite nested sequence $(\widetilde{U}_{n})_{n\in\mathbb{N}}$ is in the class $[U_{n}]_{n\in\mathbb{N}}$, then there exists a natural number $p(m)\in\mathbb{N}$ such that $W\supset \widetilde{U}_{m}\supset U_{p(m)}$. Hence,  the connected open subset $U^{\ast}_{p(m)}\subset{\rm Ends}(R_{k'})$ contains the end $[U_{n}]_{n\in\mathbb{N}}$ and satisfies that $F(U^{\ast}_{p(m)})\subset W^{\ast}$.\\

As the continuous map $F$ is defined from a compact space to a Hausdorff space, from the Theorem 2.1 in \cite{Dugu}*{p. 226} it follows that the function map $F$ is closed. Thereby, we conclude that $F$ is a homeomorphism. 
\end{proof}

\subsection{Branched covering maps}

We shall consider the fiber product coming from covering branched maps and we shall establish conditions on such maps guaranteeing connectedness of the fiber product. Recall that a topological surface is of \emph{infinite-type} if it has fundamental group infinitely generated.

\vspace{1.5mm}
Let $S_{1}$ and $S_{2}$ be Riemann surfaces such that $S_{1}$ is of infinite-type and, $S_2$ has $p\geq 2$ ends and genus $g\in\mathbb{N}_{0}\cup\{\infty\}$. Let $\beta_{i}:S_{i}\to\mathbb{C}$  be a branched covering map, for each $i\in\{1,2\}$ such that:

\begin{enumerate}
    \item[\textbf{(1)}] Their branched points are the discrete subsets $A_{1}$ and $A_{2}$ of $\mathbb{C}$ respectively, such that $A_{2}\subset A_{1}$ and $A_{1}$ is infinite.
    \item[\textbf{(2)}] For each $z \in \mathbb{C}\setminus A_{i}$, the fiber $\beta^{-1}_{i}(z)$ consists of $p_{i}\geq 2$ elements.
    \item[\textbf{(3)}] The maps $\beta_{1}$ and $\beta_{2}$ are proper,  that is, the inverse image of any compact subset $K \subset \mathbb{C}$ under $\beta_{i}$ is also a compact subset of $S_{i}$. 
\end{enumerate}

\begin{remark}\label{remarK:projection_map_beta_proper}
Given that the branched covering maps $\beta_{1}$ and $\beta_{2}$ are proper, then the continuous projection map $\beta:S_{1}\times_{(\beta_{1},\beta_{2})}S_{2}\to\mathbb{C}$ described in equation (\ref{eq:cont_proj_prod_fib}) is also proper, because $\beta^{-1}(B)$ is a closed subset of the compact $\beta^{-1}_{1}(B)\times\beta^{-1}_{2}(B)$, for each compact subset $B\subset \mathbb{C}$. Therefore, the inverse image $\beta^{-1}(B)$ is compact.
\end{remark}

The following result guarantees the connectedness of fiber product $S_{1}\times_{(\beta_{1},\beta_{2})}S_{2}$. 

\begin{theorem}\label{theorem:ends_space_of_fiber_product}
Suppose that the locus of singular points of the fiber product $S_{1}\times_{(\beta_{1},\beta_{2})}S_{2}$ is
\[
{\rm Sing}(S_{1}\times_{(\beta_{1},\beta_{2})}S_{2})=\{(z_{1},z_{2})\in S_{1}\times_{(\beta_{1},\beta_{2})}S_{2}: \beta_{1}(z_{1})=\beta_{2}(z_2)\in A_{2}\}.
\]
Then the fiber product
 $S_{1}\times_{(\beta_{1},\beta_{2})}S_{2}$ is connected.
\end{theorem}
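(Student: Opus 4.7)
The plan is to reduce connectedness of $X:=S_1\times_{(\beta_1,\beta_2)}S_2$ to a combinatorial statement about the ``sheet-pairs'' over a generic value of $\beta$, and then establish transitivity of a natural equivalence relation by combining the monodromies of the two branched covers with the extra gluing provided by the singular points. Fix $b\in\mathbb{C}\setminus A_1$; then $\beta^{-1}(b)$ has $p_1p_2$ points, which I identify with $\{1,\ldots,p_1\}\times\{1,\ldots,p_2\}$ after labelling $\beta_1^{-1}(b)$ and $\beta_2^{-1}(b)$. Since $\beta$ is proper, surjective, and open (Remark~\ref{remarK:projection_map_beta_proper} together with the local models of Subsection~\ref{Subsection:singular_riemann_surface}), every point of $X$ can be joined by a path in $X$ to some point of $\beta^{-1}(b)$. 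It therefore suffices to show that all of $\beta^{-1}(b)$ lies in a single connected component of $X$.

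Let $\sim$ be the equivalence relation on $\{1,\ldots,p_1\}\times\{1,\ldots,p_2\}$ defined by $(i,j)\sim(i',j')$ iff the corresponding points of $\beta^{-1}(b)$ lie in the same component of $X$. Two elementary kinds of moves generate $\sim$. \emph{Monodromy moves}: a small loop around $a\in A_1$ yields $(i,j)\sim(\sigma_a(i),\tau_a(j))$, where $\sigma_a\in S_{p_1}$ and $\tau_a\in S_{p_2}$ are the local monodromies of $\beta_1$ and $\beta_2$ (with $\tau_a=\mathrm{id}$ whenever $a\in A_1\setminus A_2$). \emph{Singular-gluing moves}: at each singular point $(z_1,z_2)\in\beta^{-1}(a)$, $a\in A_2$, the local model $V_{n_1(z_1),n_2(z_2)}$ is a connected union of $d=\gcd(n_1,n_2)\geq 2$ cones through the singular vertex; the $n_1n_2$ sheet-pairs near $(z_1,z_2)$---namely those $(i,j)$ with $i$ in the $\sigma_a$-cycle of $z_1$ and $j$ in the $\tau_a$-cycle of $z_2$---split under local monodromy into $d$ orbits of size $\mathrm{lcm}(n_1,n_2)$, one per cone, and the singular vertex identifies all of them into one component of $X$. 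The hypothesis ${\rm Sing}(X)=\beta^{-1}(A_2)$ is crucial in guaranteeing that this ``block collapse'' applies at \emph{every} preimage pair over $A_2$.

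Transitivity of $\sim$ then follows in two symmetric steps. Fixing $j$: monodromy moves at $a\in A_1\setminus A_2$ give $(i,j)\sim(\sigma_a(i),j)$, while singular-gluing moves at $a\in A_2$ give $(i,j)\sim(i',j)$ whenever $i,i'$ share a $\sigma_a$-cycle; combined over all $a\in A_1$, these generate orbit equivalence on the first coordinate under the full monodromy group $\langle\sigma_a:a\in A_1\rangle$ of $\beta_1$, transitive on $\{1,\ldots,p_1\}$ because $S_1$ is connected. Symmetrically, fixing $i$, singular-gluing moves at $a\in A_2$ alone give $(i,j)\sim(i,j')$ whenever $j,j'$ share a $\tau_a$-cycle, generating orbit equivalence on the second coordinate under $\langle\tau_a:a\in A_2\rangle$, the monodromy group of $\beta_2$, transitive because $S_2$ is connected. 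Chaining $(i,j)\sim(i',j)\sim(i',j')$ shows $\sim$ is the total relation on $\beta^{-1}(b)$, hence $X$ is connected.

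The main obstacle is justifying the singular-gluing move: one must verify that at each singular point $(z_1,z_2)\in\beta^{-1}(A_2)$ the $d$ local-monodromy orbits correspond precisely to the $d$ cones of $V_{n_1,n_2}$ and together exhaust the entire $n_1n_2$-block of associated sheet-pairs. This follows from the explicit cone parametrization recalled in Subsection~\ref{Subsection:singular_riemann_surface}, but conceptually it is the hypothesis that does the essential work: by forcing $d\geq 2$ at every pair over $A_2$, it makes the second-coordinate step (which uses \emph{only} singular-gluing) successfully chain to orbit equivalence under $\langle\tau_a:a\in A_2\rangle$. Without this, some pairs over $A_2$ could be regular ($d=1$), leaving corresponding sheet-pairs unmerged in the second coordinate and breaking the transitivity argument.
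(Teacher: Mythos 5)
Your proposal is correct, but it follows a genuinely different route from the paper. The paper argues constructively: fixing a base point and lifting arcs that avoid $A_{1}$ through $\beta_{2}$, it builds $p_{2}$ maps $J_{k}\colon S_{1}\to S_{1}\times_{(\beta_{1},\beta_{2})}S_{2}$, shows that the images $R_{k}=J_{k}(S_{1})$ are connected, that they cover the whole fiber product, and that they all contain the non-empty singular locus, $\bigcap_{k=1}^{p_{2}}R_{k}={\rm Sing}(S_{1}\times_{(\beta_{1},\beta_{2})}S_{2})$; connectedness follows since a union of connected sets with a common point is connected, and this explicit construction is then reused for Corollary \ref{corollary:properties_irreducible_components2} (each irreducible component biholomorphic to $S_{1}$), which your argument does not directly provide. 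You instead reduce connectedness to transitivity of an equivalence relation on the generic fiber $\beta^{-1}(b)\cong\{1,\dots,p_{1}\}\times\{1,\dots,p_{2}\}$ (the reduction via properness and openness of $\beta$ is fine, since each clopen component has clopen image in $\mathbb{C}$) and you establish transitivity by combining monodromy moves with passage through the points of the fiber product lying over $A_{2}$; the two-step chaining $(i,j)\sim(i',j)\sim(i',j')$ does work. Two small points deserve attention. First, for your cycle-equivalences to exhaust the full monodromy groups you must allow $\sigma_{a}$ and $\tau_{a}$ to be conjugated by arbitrary paths in $\mathbb{C}\setminus A_{1}$, and use that $\pi_{1}(\mathbb{C}\setminus A_{1})\to\pi_{1}(\mathbb{C}\setminus A_{2})$ is surjective and that $S_{i}\setminus\beta_{i}^{-1}(A_{i})$ is connected; this is standard but should be said. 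Second, the aside in your last paragraph misidentifies the role of the hypothesis: the gluing through a point $(z_{1},z_{2})$ over $a\in A_{2}$ does not require $d=\gcd(n_{1},n_{2})\geq 2$, because the local model $V_{n_{1},n_{2}}$ is connected (indeed a disk) also when $d=1$; the stated hypothesis merely guarantees that the move, as you formulated it (only at singular points), is available at every pair over $A_{2}$, so its failure would not by itself break your transitivity argument. This does not affect the validity of your proof of the theorem as stated.
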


\begin{proof}
Let us star by embedding of $p_2$ different ways the Riemann surface $S_{1}$ in the fiber product $S_{1}\times_{(\beta_{1},\beta_{2})}S_{2}$. The embedding is given by $J_{k}:S_{1} \hookrightarrow S_{1}\times_{(\beta_{1},\beta_{2})}S_{2}$, for each $k\in\{1,\ldots,p_{2}\}$. We fix $z_{0}\in S_{1}\setminus \beta_{1}^{-1}(A_{2})$ which we will call the \emph{base point} and consider the complex number $u$ such that $u\notin A_{2}$ and $\beta_{1}(z_{0})=u$. Given that the order of the covering function $\beta_{2}$ is $p_{2}$ and the complex $u$ is not one of its branch points, then the fiber $\beta_{2}^{-1}(u)\subset S_{2}$ is conformed by $p_{2}$ points $w_{1},\ldots,w_{p_{2}}$. Now, let us consider the element $w_{k}$, for $k\in\{1,\ldots,p_{2}\}$, then the embedding  $J_{k}$ sends the base point $z_{0}$ in the couple $(z_{0},w_{k})\in S_{1}\times_{(\beta_{1},\beta_{2})}S_{2}$, 
\begin{equation}\label{eq:embeding-based-point}
 z_{0}\mapsto (z_{0},w_{k}).  
\end{equation}

Taking a point $z\in S_{1}\setminus\beta_{1}^{-1}(A_{2})$ we will define on the fiber product the couple at which is sent $z$ under the embedding $J_{k}$.  We denote by $v$ the complex number such that $v\notin A_{2}$ and $\beta_{1}(z)=v$. As $v$ is not a branch point of $\beta_{2}$, then the fiber $\beta_{2}^{-1}(v)\subset S_{2}$  is conformed by $p_{2}$ points. Let us consider the curve $\gamma$ from the closed interval $[0,1]$ to $\mathbb{C}\setminus A_{1}$ with end points $u$ and $v$, that is $\gamma(0)=u$ and $\gamma(1)=v$. If we consider the lifting $\tilde{\gamma}$ in $S_{2}$ of the curve $\gamma$ such that one of its endpoints is $w_{k}=\tilde{\gamma}(0)$ (see equation (\ref{eq:embeding-based-point})) and the other endpoint $\tilde{\gamma}(1)$ belongs to the fiber $\beta_{2}^{-1}(v)$. Then, the embedding $J_{k}$ sends the point $z$ into the couple $(z,\tilde{\gamma}(1))\in S_{1}\times_{(\beta_{1},\beta_{2})}S_{2}$,

\begin{equation}\label{eq:embeding-based-point-2}
 z\mapsto (z,\tilde{\gamma}(1)).  
\end{equation}
Finally, the embedding $J_{k}$ sends each point
$z\in \beta_{1}^{-1}(A_{1})$ into the unique couple
\[
(z,w)\in {\rm Sing}(S_{1}\times_{(\beta_{1},\beta_{2})}S_{2}),
\]
such that $\beta_{1}(z)=\beta_{2}(w)\in A_{2}$. By construction and using the analytic continuation we have that map  $J_{k}$ is a holomorphic embedding, i.e., $J_{k}(S_{1})=R_{k}$ is biholomorphic to $S_{1}$. Additionally, we have
\[
\bigcap_{k=1}^{p_2} R_{k}={\rm Sing}(S_{1}\times_{(\beta_{1},\beta_{2})}S_{2}).
\]
We note that the singular Riemann surface $S_{1}\times_{(\beta_{1},\beta_{2})}S_{2}$ can be written as the union 
\begin{equation*}
S_{1}\times_{(\beta_{1},\beta_{2})}S_{2}= \bigcup\limits_{k=1}^{p_2} R_{k}, 
\end{equation*}
because if we consider the pair $(z,w)$ in $S_{1}\times_{(\beta_{1},\beta_{2})}S_{2}$ such that $\beta_{1}(z)=\beta_{2}(w)\notin A_{2}$, then the point $w$ is belonged to fiber $\beta_{2}^{-1}(\beta_{1}(z))$, which is conformed by $p_{2}$ points $w_{1},\ldots,w_{p_{2}}$, it implies that there exist $k\in\{1,\ldots,p_{2}\}$ such that $w=w_{k}$, thus we conclude that $(z,w)=(z,w_{k})\in R_{k}$. Contrary, if $\beta_{1}(z)=\beta_{2}(w)\in A_{2}$, then 
\[
(z,w)\in {\rm Sing}(S_{1}\times_{(\beta_{1},\beta_{2})}S_{2})\subset \bigcap\limits_{k=1}^{p_2} R_{k}\subset \bigcup\limits_{k=1}^{p_2} R_{k}.
\]
Given that the fiber product $S_{1}\times_{(\beta_{1},\beta_{2})}S_{2}$ is the union of $p_{2}$ connected spaces having non empty intersection, then we conclude that $S_{1}\times_{(\beta_{1},\beta_{2})}S_{2}$ is connected.
\end{proof}

\begin{corollary}\label{corollary:properties_irreducible_components2}
The space coming from the  fiber product $S_{1}\times_{(\beta_{1},\beta_{2})}S_{2}$ by removing its locus of singular points $ {\rm Sing }(S_{1}\times_{(\beta_{1},\beta_{2})}S_{2}$) is conformed by $p_{2}$ connected components $\widetilde{R}_{1},\ldots,\widetilde{R}_{p_{2}}$ such that 
\[
\widetilde{R}_{k}=R_{k}\setminus {\rm  Sing}(S_{1}\times_{(\beta_{1},\beta_{2})}S_{2}),
\]
is topologically equivalent to $S_{1}$ removing a discrete set of cardinality ${\rm card}(A_{2})$, for each $k\in\{1,\ldots,p_2\}$. Moreover, the irreducible component associated $R_{k}$  to $\widetilde{R}_{k}$ is biholomorphically equivalent to the Riemann surface $S_{1}$.
\end{corollary}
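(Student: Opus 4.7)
The plan is to exploit directly the $p_{2}$ explicit holomorphic embeddings $J_{k}\colon S_{1}\hookrightarrow S_{1}\times_{(\beta_{1},\beta_{2})}S_{2}$, $k\in\{1,\ldots,p_{2}\}$, constructed in the proof of Theorem \ref{theorem:ends_space_of_fiber_product}. That construction already provides $R_{k}:=J_{k}(S_{1})$ with $\bigcup_{k=1}^{p_{2}} R_{k}=S_{1}\times_{(\beta_{1},\beta_{2})}S_{2}$ and $\bigcap_{k=1}^{p_{2}} R_{k}=\mathrm{Sing}(S_{1}\times_{(\beta_{1},\beta_{2})}S_{2})$. My first step is to verify that the $p_{2}$ sets $\widetilde{R}_{k}:=R_{k}\setminus\mathrm{Sing}(S_{1}\times_{(\beta_{1},\beta_{2})}S_{2})$ are pairwise disjoint: any point in $\widetilde{R}_{k}\cap\widetilde{R}_{l}$ with $k\neq l$ would lie in $R_{k}\cap R_{l}\subset\mathrm{Sing}(S_{1}\times_{(\beta_{1},\beta_{2})}S_{2})$, contradicting its belonging to $\widetilde{R}_{k}$.

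Next, I would identify $\widetilde{R}_{k}$ topologically by inspecting the three defining cases of $J_{k}$ in the proof of Theorem \ref{theorem:ends_space_of_fiber_product}: a point $z\in S_{1}$ is sent by $J_{k}$ into $\mathrm{Sing}(S_{1}\times_{(\beta_{1},\beta_{2})}S_{2})$ if and only if $\beta_{1}(z)\in A_{2}$. Hence $J_{k}^{-1}(\widetilde{R}_{k})=S_{1}\setminus\beta_{1}^{-1}(A_{2})$. Since $A_{2}$ is discrete in $\mathbb{C}$ and $\beta_{1}$ is a branched covering with finite fibers, $\beta_{1}^{-1}(A_{2})$ is a discrete subset of $S_{1}$ of the same cardinality as $A_{2}$. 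Therefore $\widetilde{R}_{k}$ is homeomorphic to $S_{1}$ with a discrete set of cardinality $\mathrm{card}(A_{2})$ removed. In particular $\widetilde{R}_{k}$ is connected, because removing a discrete subset from the connected surface $S_{1}$ preserves connectedness. Combined with the pairwise disjointness and the fact that the $\widetilde{R}_{k}$ cover the fiber product minus its singular locus, this shows that they are exactly the $p_{2}$ connected components.

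Finally, for the last assertion, I would use that $J_{k}\colon S_{1}\to R_{k}$ is a holomorphic embedding (as established inside the proof of Theorem \ref{theorem:ends_space_of_fiber_product} via lifting paths in $\mathbb{C}\setminus A_{1}$ and analytic continuation), whose inverse on $\widetilde{R}_{k}$ is simply the restriction of the first projection $\pi_{1}$. Thus $J_{k}$ is biholomorphic between $S_{1}\setminus\beta_{1}^{-1}(A_{2})$ and $\widetilde{R}_{k}$. By the uniqueness, up to biholomorphism, of the Riemann surface obtained from $\widetilde{R}_{k}$ by filling in its punctures, and since the punctures of $\widetilde{R}_{k}$ correspond bijectively under $J_{k}$ to the points of $\beta_{1}^{-1}(A_{2})\subset S_{1}$ (which are already honest points of the Riemann surface $S_{1}$), this biholomorphism extends across the punctures to yield $R_{k}\cong S_{1}$.

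I expect the only subtle point to be the last identification: namely, that the biholomorphism type of the irreducible component $R_{k}$, defined intrinsically by adjoining punctures to $\widetilde{R}_{k}$, really coincides with the one inherited from $S_{1}$ via $J_{k}$. This will follow at once from the uniqueness clause in the definition of irreducible component, once one checks that $J_{k}$ extends continuously across $\beta_{1}^{-1}(A_{2})$ in a way compatible with the complex chart near each singular point provided by Proposition \ref{pro:RSA}.
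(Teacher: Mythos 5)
Your argument is essentially the paper's own: the corollary is given there with no separate proof, being read off from the construction of the embeddings $J_{k}$ in the proof of Theorem \ref{theorem:ends_space_of_fiber_product}, exactly as you do (the decomposition $S_{1}\times_{(\beta_{1},\beta_{2})}S_{2}=\bigcup_{k=1}^{p_{2}}R_{k}$ with $R_{k}=J_{k}(S_{1})$, intersection equal to ${\rm Sing}(S_{1}\times_{(\beta_{1},\beta_{2})}S_{2})$, the identification $\widetilde{R}_{k}\cong S_{1}\setminus\beta_{1}^{-1}(A_{2})$, and $R_{k}\cong S_{1}$ by filling in the punctures). The only detail you supply beyond what the paper records --- pairwise disjointness of the $\widetilde{R}_{k}$, which needs $R_{k}\cap R_{l}\subset {\rm Sing}(S_{1}\times_{(\beta_{1},\beta_{2})}S_{2})$ for $k\neq l$ rather than only the stated equality for the total intersection --- follows from uniqueness of path lifting for $\beta_{2}$ (lifts of a common path starting at distinct points of a fiber remain distinct), so your proposal is a faithful expansion of the paper's intended justification.
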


Now, we shall describe the relationship of the ends spaces of the fiber product $S_{1}\times_{(\beta_{1},\beta_{2})}S_{2}$ and the topology type of the Riemann surfaces $S_{1}$ and $S_{2}$. It is necessary to introduce the following concept. 

\vspace{1.5mm}
\begin{definition}
We say that the fiber $\beta_{1}^{-1}(A_{2})$ \textbf{does not accumulate at the end}  $[V_{n}]_{n\in\mathbb{N}}$ of $S_{1}$ if there is $s\in\mathbb{N}$ such that $V_{s}\cap \beta^{-1}_{1}(A_{2})=\emptyset$. In this case, it can be suppose without loss of generality that $s=1$. This implies that $V_{n}\cap \beta_{1}^{-1}(A_{2})=\emptyset$, for each $n\in\mathbb{N}$. We denote by $\mathcal{E}$ the subset of ${\rm Ends}(S_{1})$ of all ends such the fiber $\beta_{1}^{-1}(A_{2})$ not accumulate. 
\end{definition}

\begin{remark}\label{remark:embedding_ends_not_accumulate}
The embedding $J_{k}$ described in equation (\ref{eq:embeding-based-point-2}) and the subset $\mathcal{E}\subset {\rm Ends}(S_{1})$ define the following properties in the ends space of the fiber product $S_{1}\times_{(\beta_{1},\beta_{2})}S_{2}$:

\begin{itemize}
\item[\textbf{(1)}] If we consider the end $[V_{n}]_{n\in\mathbb{N}}\in\mathcal{E}$, then the image of sequence $(V_{n})_{n\in\mathbb{N}}$ under the embedding $J_{k}$ is also an infinite nested sequence $
(V_{n}^{k})_{n\in\mathbb{N}}$ of the fiber product $S_{1}\times_{(\beta_{1},\beta_{2})}S_{2}$, where $J_{k}(V_{n}):=V_{n}^{k}$ and $k\in\{1,\ldots, p_{2}\}$. Such sequence defines the \textbf{common end} $[V_{n}^{k}]_{n\in\mathbb{N}}$ in $S_{1}\times_{(\beta_{1},\beta_{2})}S_{2}$.

\item[\textbf{(2)}] For each $k\in\{1,\ldots,p_{2}\}$, the map $J_{k}$ induces an embedding for \textbf{common ends}
\[
\tilde{J}_{k}:\mathcal{E}\hookrightarrow {\rm Ends}(S_{1}\times_{(\beta_{1},\beta_{2})}S_{2}).
\]

\item[\textbf{(3)}] Given that
\[
\bigcap_{k=1}^{p_2} J_{k}(S_{1})=\bigcap_{k=1}^{p_2} R_{k}={\rm Sing}(S_{1}\times_{(\beta_{1},\beta_{2})}S_{2}),
\]
then
\[
\bigsqcup_{k=1}^{p_{2}}\tilde{J}_{k}(\mathcal{E})\subset {\rm Ends}(S_{1}\times_{(\beta_{1},\beta_{2})}S_{2}).
\]
\end{itemize}
\end{remark}

\begin{theorem}\label{theorem:ends_space_of_fiber_product2}
Suppose that the locus of singular points of the fiber product $S_{1}\times_{(\beta_{1},\beta_{2})}S_{2}$ is
\[
{\rm Sing}(S_{1}\times_{(\beta_{1},\beta_{2})}S_{2})=\{(z_{1},z_{2})\in S_{1}\times_{(\beta_{1},\beta_{2})}S_{2}: \beta_{1}(z_{1})=\beta_{2}(z_2)\in A_{2}\}.
\]
Then it holds the following properties:

\begin{itemize}
  \item[\textbf{(1)}] If the subset $A_{2}\subset \mathbb{C}$ is finite, then the ends space of the fiber product $S_{1}\times_{(\beta_{1},\beta_{2})}S_{2}$ can be written as the union of $p_{2}$ copies of ${\rm Ends}(S_{1})$. In other words, the spaces
  \[
  {\rm Ends}(S_{1}\times_{(\beta_{1},\beta_{2})}S_{2}) \quad \text{ and } \quad \bigsqcup_{k=1}^{p_2}\tilde{J}_{k}({\rm Ends}(S_{1}))
  \]
  are homeomorphic.
 
 \item[\textbf{(2)}] If $A_{2}$ is an infinite set and the subset ${\rm Ends}(S_{1})\setminus \mathcal{E}$ is  conformed by only  one end, then the ends space of the fiber product $S_{1}\times_{(\beta_{1},\beta_{2})}S_{2}$ is
 \[
 {\rm Ends}(S_{1}\times_{(\beta_{1},\beta_{2})}S_{2})=\{[\tilde{U}_{n}]_{n\in\mathbb{N}}\}\cup \left(\bigsqcup_{k=1}^{p_2}\tilde{J}_{k}(\mathcal{E})\right).
 \]
\end{itemize}
\end{theorem}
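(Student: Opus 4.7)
The plan is to treat the two items separately: Part~(1) will fall out of Theorem~\ref{teo1} once its hypotheses are verified, while Part~(2) requires both constructing the distinguished ``singular'' end $[\tilde U_n]$ by hand and running a case analysis on an arbitrary end of the fiber product. Throughout I will use that the projection $\pi_1\colon S_1\times_{(\beta_1,\beta_2)}S_2\to S_1$ is proper, which follows from properness of $\beta_2$ via the inclusion $\pi_1^{-1}(K)\subset K\times\beta_2^{-1}(\beta_1(K))$ together with closedness of the fiber-product condition.

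For Part~(1), since $\beta_1$ has finite degree $p_1$ and $A_2$ is finite, $\beta_1^{-1}(A_2)$ is finite and hence so is $\mathrm{Sing}(S_1\times_{(\beta_1,\beta_2)}S_2)$. Theorem~\ref{teo1} then gives $\mathrm{Ends}(S_1\times_{(\beta_1,\beta_2)}S_2)\cong \mathrm{Ends}(\widetilde{S_1\times_{(\beta_1,\beta_2)}S_2})$, and Corollary~\ref{corollary:properties_irreducible_components2} identifies the right-hand side with $\bigsqcup_{k=1}^{p_2}\mathrm{Ends}(R_k)=\bigsqcup_{k=1}^{p_2}\mathrm{Ends}(S_1)$. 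Since $\beta_1^{-1}(A_2)$ is finite we have $\mathcal{E}=\mathrm{Ends}(S_1)$, and tracking the construction of the map $F$ in the proof of Theorem~\ref{teo1} shows that its restriction to $\mathrm{Ends}(R_k)$ coincides with the common-end embedding $\tilde J_k$ from Remark~\ref{remark:embedding_ends_not_accumulate}, so the claimed homeomorphism follows.

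For Part~(2), let $[U_n]_{n\in\mathbb N}$ be the unique element of $\mathrm{Ends}(S_1)\setminus\mathcal{E}$ and set $\tilde U_n:=\pi_1^{-1}(U_n)$. Properness of $\pi_1$ makes $\partial\tilde U_n$ compact and makes $(\tilde U_n)$ exhaust compact subsets of the fiber product, while nestedness and empty total intersection are inherited from $(U_n)$; connectedness of $\tilde U_n$ follows because $\tilde U_n=\bigcup_{k=1}^{p_2}J_k(U_n)$ and the singular points coming from $U_n\cap\beta_1^{-1}(A_2)\neq\emptyset$ glue these copies via the monodromy of $\beta_2$. The end $[\tilde U_n]$ is disjoint from every $\tilde J_k(\mathcal{E})$ since each $\tilde U_n$ contains singular points, whereas a defining sequence $(J_k(V_n))$ for an end in $\mathcal{E}$ eventually avoids $\mathrm{Sing}$; combined with Remark~\ref{remarK:different-classes} this separates the ends, and pairwise disjointness of the $\tilde J_k(\mathcal{E})$ is Remark~\ref{remark:embedding_ends_not_accumulate}(3). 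For surjectivity I take $[\tilde V_n]$ and split into two cases. If some $\tilde V_N$ is disjoint from $\mathrm{Sing}$, connectedness forces $(\tilde V_m)_{m\geq N}$ into a single $\widetilde R_k$ and $\pi_1|_{R_k}$ transports it to a nested sequence in $S_1$ whose end lies in $\mathcal{E}$ (else $(\tilde V_m)$ would keep meeting $\mathrm{Sing}$) and whose $\tilde J_k$-image is $[\tilde V_n]$. Otherwise pick $(z_n,w_n)\in\tilde V_n\cap\mathrm{Sing}$ for every $n$; properness of $\pi_1$ forces $(z_n)$ to leave every compact subset of $S_1$, and since $\beta_1^{-1}(A_2)$ accumulates only at $[U_n]$ the sequence $(z_n)$ converges to that end, from which a boundary argument shows that $(\tilde V_n)$ and $(\tilde U_n)$ are cofinal in each other and hence $[\tilde V_n]=[\tilde U_n]$.

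The main obstacle I foresee is completing this cofinality step and, beneath it, the connectedness of each $\tilde U_n$. One direction of cofinality is immediate: compactness of $\pi_1(\partial\tilde V_m)$ and connectedness of $\tilde V_m$ together with $z_n\to[U_n]$ force $\tilde V_n\subset\tilde U_m$ for large $n$. The reverse requires producing $k$ with $\tilde U_k\cap\partial\tilde V_n=\emptyset$ via Definition~\ref{definition:end}(3) applied to the compact $\partial\tilde V_n$, and then using connectedness of $\tilde U_k$ together with the forward inclusion to rule out $\tilde U_k$ lying in the complement of $\overline{\tilde V_n}$. Both arguments hinge on connectedness of $\tilde U_k$, which in turn demands that the monodromy of $\beta_2$ around branch points in $\beta_1(U_k)\cap A_2$ joins the $p_2$ sheets $J_1(U_k),\ldots,J_{p_2}(U_k)$; making this precise uses in an essential way that $\beta_1^{-1}(A_2)$ genuinely accumulates at $[U_n]$ and is the delicate technical core of the proof.
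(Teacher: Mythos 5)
Your argument is correct, and while Part (1) coincides with the paper's one-line proof (Theorem \ref{teo1} plus Corollary \ref{corollary:properties_irreducible_components2}, noting $\mathcal{E}={\rm Ends}(S_1)$ when $A_2$ is finite), your Part (2) takes a genuinely different technical route. The paper builds a $\sigma$-compact exhaustion $B_n$ of the fiber product out of the $J_k$-images of an exhaustion $K_n$ of $S_1$ adapted to an enumeration of $\beta_1^{-1}(A_2)$, defines the secret end as a nested choice of complementary connected components of the $B_n$, and classifies an arbitrary end $[W_n]$ by whether $W_n$ stays inside those components or escapes into the others, at which point it invokes the identification $\bigsqcup_{j\geq 2}(U_j^1)^{\ast}=\bigsqcup_{k=1}^{p_2}\tilde{J}_k(\mathcal{E})$ asserted in Remark \ref{remark:open_disjoint_Ends_product_fiber}\textbf{(4)}. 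You instead exploit properness of $\pi_1$ to take $\tilde{U}_n:=\pi_1^{-1}(U_n)$ outright, verify the end axioms directly, and classify an arbitrary end by whether its defining sets eventually avoid ${\rm Sing}$ (in which case they sit inside a single $\widetilde{R}_k$ and transport through $J_k^{-1}$ to an end of $\mathcal{E}$, exactly because a set containing some $U_j$ would meet $\beta_1^{-1}(A_2)$) or always meet ${\rm Sing}$ (in which case your cofinality argument identifies the end with the secret one). Your route is more self-contained --- it proves rather than asserts the identification of the non-secret ends, and both directions of your cofinality step check out --- at the cost of two verifications the paper's choice of connected components sidesteps: connectedness of $\pi_1^{-1}(U_n)$, and the fact that $\beta_1^{-1}(A_2)\setminus U_m$ is finite for every $m$ (needed for ``$z_n$ converges to the end $[U_n]$''; it follows because an infinite closed discrete subset of $S_1$ must accumulate at some end, which the definition of $\mathcal{E}$ forbids, and accumulation at $[U_n]$ is excluded by disjointness from $U_m$). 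On the point you single out as the delicate core: no monodromy argument is needed for connectedness of $\tilde{U}_n$, since by the construction in the proof of Theorem \ref{theorem:ends_space_of_fiber_product} all the embeddings $J_k$ agree on $\beta_1^{-1}(A_2)$ and $\bigcap_{k=1}^{p_2}R_k={\rm Sing}(S_1\times_{(\beta_1,\beta_2)}S_2)$, so the $p_2$ sheets $J_1(U_n),\ldots,J_{p_2}(U_n)$ all pass through each singular point lying over $U_n\cap\beta_1^{-1}(A_2)$, which is non-empty precisely because $[U_n]\notin\mathcal{E}$; your first formulation (gluing at shared singular points) is the right one. Finally, a small slip: in the forward cofinality direction the relevant compact is $\partial\tilde{U}_m$ (equivalently a subset of $\pi_1^{-1}(\partial U_m)$), not $\pi_1(\partial\tilde{V}_m)$; with that correction the connectedness-plus-compact-boundary argument runs as you describe.
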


Let us remark that the property \textbf{(2)} is an equality in the category of sets, not in the category of topological spaces. The proof of property \textbf{(1)} follows from  Theorem \ref{teo1} and Corollary \ref{corollary:properties_irreducible_components2}. Let us proceed with the proof of property \textbf{(2)}:

\begin{proof}

 \textbf{Step 1.} We shall build a suitable increasing sequence  of compact $B_{1}\subset B_{2}\subset \ldots $ covering the fiber product $S_{1}\times_{(\beta_{1},\beta_{2})}S_{2}$. 
\\

Recall that each Riemann surface is a $\sigma$-compact space, then there is an increasing sequence of compact subsets $K_{1}\subset K_{2}\subset \ldots$ of $S_{1}$ covering the surface $S_{1}$ \emph{i.e.}, 
\[
\bigcup\limits_{n\in\mathbb{N}}K_{n}=S_{1}.
\]
Now, as the subset $A_{2}$ of $\mathbb{C}$ is an infinite subset of $A_{1}$, then the fiber $\beta_{1}^{-1}(A_{2})$ must be (countable) infinite and it is written as $\beta_{1}^{-1}(A_{2})=\{\textbf{w}_{n}:n\in\mathbb{N}\}$. So, for the only end $[U_{n}]_{n\in\mathbb{N}}$ of $S_{1}$ not belonged to $\mathcal{E}$ one can suppose without of generality that:
\begin{itemize}
\item[$\ast$] The connected open subset $U_{n}\subset S_{1}$ is a connected component of $S_{1}\setminus K_{n}$, such that its boundary $\partial U_{n} \subset \partial K_{n}$, for each $n\in \mathbb{N}$.    

\item[$\ast$] The points of the set $\beta_{1}^{-1}(A_{2})=\{\textbf{w}_{n}:n\in\mathbb{N}\}$ satisfies for each $n\in\mathbb{N}$ that

$$\{\textbf{w}_{1},\ldots,\textbf{w}_{n-1}\}\subset  K_{n} \quad \text{and} \quad \{\textbf{w}_{n},\textbf{w}_{n+1},\ldots\}\subset U_{n}.$$
\end{itemize}

Now, for each $n\in\mathbb{N}$ we consider the  subset $K_{n}\setminus \{\textbf{w}_{1},\ldots,\textbf{w}_{n-1}\}$ of $S_{1}$, then the closure in the fiber product $S_{1}\times_{(\beta_{1},\beta_{2})}S_{2}$ of the image $J_{k}(K_{n}\setminus \{\textbf{w}_{1},\ldots,\textbf{w}_{n-1}\})$ denoted by
\[
\overline{K}_{n}^{k}\subset S_{1}\times_{(\beta_{1},\beta_{2})}S_{2},
\]
is a compact subset, for each $k\in\{1,\ldots,p_{2}\}$. Thus, for each $n\in\mathbb{N}$, we get the new compact subset $B_{n}$ of the fiber product $S_{1}\times_{(\beta_{1},\beta_{2})}S_{2}$, where
 \begin{equation}\label{eq:special_compact}
    B_{n}:=\bigcup\limits_{k=1}^{p_{2}}\overline{K}_{n}^{k}\subset S_{1}\times_{(\beta_{1},\beta_{2})}S_{2}.
    \end{equation}
It satisfies that the intersection
 \[
B_{n}\bigcap {\rm Sing}(S_{1}\times_{(\beta_{1},\beta_{2})}S_{2})=\bigcup_{k=1}^{n-1}\beta_{1}^{-1}(\textbf{w}_{k}).
\]
Therefore, the family of compact  $\{B_{n}:n\in\mathbb{N}\}$ defines the increasing sequence $B_{1}\subset B_{2}\subset \ldots$ covering the fiber product, \emph{i.e.}, 
    \[
    S_{1}\times_{(\beta_{1},\beta_{2})}S_{2}=\bigcup\limits_{n\in\mathbb{N}}B_{n}.
    \]
    In other words, the fiber product is a $\sigma$-compact space.\\

\textbf{Step 2.} Now, we shall build the end $[\tilde{U}_{n}]_{n\in\mathbb{N}}$ of the fiber product $S_{1}\times_{(\beta_{1},\beta_{2})}S_{2}$, which is described in the theorem. \\

Given that the fiber product  $S_{1}\times_{(\beta_{1},\beta_{2})}S_{2}$ is connected locally and compact locally, then for each $n\in\mathbb{N}$ the set
\[
S_{1}\times_{(\beta_{1},\beta_{2})}S_{2}\setminus B_{n}
\]
has a finite number of connected components $U^{n}_{1},\ldots, U^{n}_{i(n)}$, for any $i(n)\in\mathbb{N}$, whose boundaries are compact. 

\begin{remark}\label{remark:open_disjoint_Ends_product_fiber}
The connected components of $S_{1}\times_{(\beta_{1},\beta_{2})}S_{2}\setminus B_{n}$ have the following properties:

\begin{itemize}
\item[\textbf{(1)}] We can suppose without of generality that each one of them is non-compact. 

\item[\textbf{(2)}] One of these open connected components must contain the union
\[
\bigsqcup\limits_{k=1}^{p_{2}}J_{k}(U_{n}\setminus\{\textbf{w}_{l}:l\in\mathbb{N}\}),
\]
for each $n\in\mathbb{N}$. Such connected component is denoted by
\begin{equation*}\label{eq:special-ends}
\tilde{U}_{n}:=U^{n}_{1}.
\end{equation*}

\item[\textbf{(3)}] Given that $B_{n}\subset B_{n+1}$ and from the choice of the connected component $\tilde{U}_{n}$, we obtain that  $\tilde{U}_{n}\supset \tilde{U}_{n+1}$, for each $n\in\mathbb{N}$.

\item[\textbf{(4)}] In particular, for $n=1$ the connected components $U_{1}^{1},\ldots,U_{i(1)}^{1}$ of $S_{1}\times_{(\beta_{1},\beta_{2})}S_{2}\setminus B_{1}$ define the pair disjoint open subsets $(U_{1}^{1})^{\ast},\ldots,(U_{i(1)}^{1})^{\ast}$  of ${\rm Ends}(S_{1}\times_{(\beta_{1},\beta_{2})}S_{2})$, respectively. From the definition of the embedding $J_{k}$ we hold the equality 
\[
\left(\bigsqcup_{k=1}^{p_2}\tilde{J}_{k}(\mathcal{E})\right)=\bigsqcup\limits_{j=2}^{i(1)} (U_{j}^{1})^{\ast}.
\]

\item[\textbf{(5)}] From the construction of $\tilde{U}_{n}$, we hold that the open subset $(\tilde{U}_{n})^{\ast}$ of ${\rm Ends}(S_{1}\times_{(\beta_{1},\beta_{2})}S_{2})$ satisfies
\[
(\tilde{U}_{n})^{\ast} \cap \left(\bigsqcup_{k=1}^{p_2}\tilde{J}_{k}(\mathcal{E})\right)=\emptyset.
\]
\end{itemize}
\end{remark}

From \textbf{(1)}, \textbf{(2)} and \textbf{(3)} in the previous remark, we hold an infinite nested sequence $(\tilde{U}_{n})_{n\in\mathbb{N}}$ of non-empty connected open subsets of the fiber product $S_{1}\times_{(\beta_{1},\beta_{2})}S_{2}$, which defines the  \textbf{secret end} $[\tilde{U}_{n}]_{n\in\mathbb{N}}$ of $S_{1}\times_{(\beta_{1},\beta_{2})}S_{2}$ (this nomenclature appears in \cite{RamVal}*{p. 559}). By \textbf{(4)} and \textbf{(5)} in Remark \ref{remark:open_disjoint_Ends_product_fiber}, we hold the following properties of the secret end $[\tilde{U}_{n}]_{n\in\mathbb{N}}$:

\begin{itemize}
\item[\textbf{(a)}] It is not in the disjoint union $\bigsqcup\limits_{k=1}^{p_2}\tilde{J}_{k}(\mathcal{E})$.

\item[\textbf{(b)}] The secret end $[\tilde{U}_{n}]_{n\in\mathbb{N}}$ is in the open subset $\tilde{U}_{n}^{\ast} \subset {\rm Ends}(S_{1}\times_{(\beta_{1},\beta_{2})}S_{2})$, for each $n\in\mathbb{N}$.
\end{itemize}

\vspace{4mm}
\textbf{Step 3.} Finally, we shall prove the equality 
\[
{\rm Ends}(S_{1}\times_{(\beta_{1},\beta_{2})}S_{2})=\{[\tilde{U}_{n}]_{n\in\mathbb{N}}\}\cup \left(\bigsqcup_{k=1}^{p_2}\tilde{J}_{k}(\mathcal{E})\right).
\]

\vspace{4mm}
Let $[W_{n}]_{n\in \mathbb{N}}$ be an end of the fiber product $S_{1}\times_{(\beta_{1},\beta_{2})}S_{2}$, then we should prove that
\[
[W_{n}]_{n\in\mathbb{N}}\subset \{[\tilde{U}_{n}]_{n\in\mathbb{N}}\}\cup \left(\bigsqcup_{k=1}^{p_2}\tilde{J}_{k}(\mathcal{E})\right).
\]
For the compact $B_{n}$ described in equation (\ref{eq:special_compact}), we can suppose without of generality that $B_{n}\cap W_{n}=\emptyset$, for each $n\in\mathbb{N}$. It implies that the open connected subset $W_{n}\subset S_{1}\times_{(\beta_{1},\beta_{2})}S_{2}$ is contained in any connected component of
\[
S_{1}\times_{(\beta_{1},\beta_{2})}S_{2}\setminus B_{n}=\bigsqcup\limits_{j=1}^{i(n)} U_{j}^{n}.
\]
We study the following cases:

\vspace{1.5mm}
\noindent \emph{Case 1.} For each $n\in\mathbb{N}$, the open $W_{n}$ is contained in the connected component  $U_{1}^{n}=\tilde{U}_{n}$. It implies that the ends $[W_{n}]_{n\in\mathbb{N}}$ and $[\tilde{U}_{n}]_{n\in\mathbb{N}}$ are equivalent. In other words, $[W_{n}]_{n\in\mathbb{N}}=[\tilde{U}_{n}]_{n\in\mathbb{N}}$.

\vspace{1.5mm}
\noindent \emph{Case 2.} There is $l\in\mathbb{N}$ such that for the open $\tilde{U}_{l}=U^{l}_{1}$, it satisfies $W_{l}\cap U^{l}_{1}=\emptyset$, then the open $W_{n}$ is contained in the disjoint union $\bigsqcup\limits_{j=2}^{i(n)} U_{j}^{n}$. It implies that
\begin{equation}\label{eq:relation-1-ends}
[W_{n}]_{n\in\mathbb{N}}\subset \bigsqcup\limits_{j=2}^{i(n)} (U_{j}^{n})^{\ast}.
\end{equation}
Given that $B_{1}\subset B_{n}$ for each $n>1$, then $S_{1}\times_{(\beta_{1},\beta_{2})}S_{2}\setminus B_{1}\supset S_{1}\times_{(\beta_{1},\beta_{2})}S_{2}\setminus B_{n}$, it implies that
\[
\bigsqcup\limits_{j=2}^{i(n)} U_{j}^{n}\subset \bigsqcup\limits_{j=2}^{i(1)} U_{j}^{1}.
\]
By property of the open subsets of the ends space we have that
\begin{equation}\label{eq:relation-2-end}
\bigsqcup\limits_{j=2}^{i(n)} (U_{j}^{n})^{\ast}\subset \bigsqcup\limits_{j=2}^{i(1)} (U_{j}^{1})^{\ast}=\left(\bigsqcup_{k=1}^{p_2}\tilde{J}_{k}(\mathcal{E})\right).
\end{equation}
From the relations describe in (\ref{eq:relation-1-ends}) and (\ref{eq:relation-2-end}) we conclude that
\[
[W_{n}]_{n\in\mathbb{N}}\in\left(\bigsqcup_{k=1}^{p_2}\tilde{J}_{k}(\mathcal{E})\right).
\]
\end{proof}

An immediate result is the following Corollary, of which we present a proof with different ideas from those used for the proof of Theorem \ref{theorem:ends_space_of_fiber_product2}.

\begin{corollary}
Suppose that the locus of singular points of the fiber product $S_{1}\times_{(\beta_{1},\beta_{2})}S_{2}$ is
\[
{\rm Sing}(S_{1}\times_{(\beta_{1},\beta_{2})}S_{2})=\{(z_{1},z_{2})\in S_{1}\times_{(\beta_{1},\beta_{2})}S_{2}: \beta_{1}(z_{1})=\beta_{2}(z_2)\in A_{2}\}.
\]
If $S_{1}$ is topologically equivalent to the Loch Ness monster and the subset $A_{2}\subset\mathbb{C}$ is infinite, then the fiber product $S_{1}\times_{(\beta_{1},\beta_{2})}S_2$ has only one end.
\end{corollary}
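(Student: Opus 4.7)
The plan is to reduce this corollary directly to Theorem \ref{theorem:ends_space_of_fiber_product2}, part \textbf{(2)}. First I would note that since $S_{1}$ is topologically equivalent to the Loch Ness monster, its ends space is a singleton, i.e. ${\rm Ends}(S_{1})=\{[V_{n}]_{n\in\mathbb{N}}\}$, where $[V_n]_{n\in\mathbb{N}}$ denotes this unique end. To invoke Theorem \ref{theorem:ends_space_of_fiber_product2}\textbf{(2)}, I must verify that $\mathcal{E}=\emptyset$, equivalently, that the fiber $\beta_{1}^{-1}(A_{2})$ accumulates at the unique end of $S_{1}$. Hence the entire content of this corollary is that a single end cannot avoid an infinite closed discrete subset of the surface.

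Second I would establish that $\beta_{1}^{-1}(A_{2})$ is indeed an infinite closed discrete subset of $S_{1}$. Infiniteness follows because $A_{2}$ is infinite and $\beta_{1}$ is surjective with every generic fiber of cardinality $p_{1}\geq 2$. That the set is closed and discrete in $S_{1}$ follows from $A_{2}$ being discrete in $\mathbb{C}$ together with the hypothesis that $\beta_{1}$ is proper (see Remark \ref{remarK:projection_map_beta_proper}).

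Third I would check accumulation at the unique end: for the representative sequence $(V_{n})_{n\in\mathbb{N}}$ of the only end of $S_1$, condition \textbf{(3)} in Definition \ref{definition:end} forces that for every compact $K\subset S_{1}$ there exists $m\in\mathbb{N}$ with $K\cap V_{m}=\emptyset$. If on the contrary $\beta_{1}^{-1}(A_{2})$ did not accumulate at this end, then by definition some $V_{s}$ would miss $\beta_{1}^{-1}(A_{2})$, and the complement $S_{1}\setminus V_{s}$ together with any exhaustion by compact sets would have to absorb the infinite closed discrete set $\beta_{1}^{-1}(A_{2})$; but Lemma \ref{lemma:spec} (one-end characterization) guarantees that for any sufficiently large compact $K'$ the complement $S_{1}\setminus K'$ is connected and equal to the tail $V_{n}$ of our nested sequence, forcing all but finitely many points of $\beta_{1}^{-1}(A_{2})$ to lie in $V_{n}$ for every $n$—contradicting the hypothetical $V_{s}\cap \beta_{1}^{-1}(A_{2})=\emptyset$. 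Therefore $[V_{n}]_{n\in\mathbb{N}}\notin \mathcal{E}$, which yields $\mathcal{E}=\emptyset$ and ${\rm Ends}(S_{1})\setminus\mathcal{E}=\{[V_{n}]_{n\in\mathbb{N}}\}$ has exactly one element.

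Finally, with these hypotheses verified, Theorem \ref{theorem:ends_space_of_fiber_product2}\textbf{(2)} applies and gives
\[
{\rm Ends}(S_{1}\times_{(\beta_{1},\beta_{2})}S_{2})=\{[\tilde{U}_{n}]_{n\in\mathbb{N}}\}\cup \Bigl(\bigsqcup_{k=1}^{p_{2}}\tilde{J}_{k}(\mathcal{E})\Bigr)=\{[\tilde{U}_{n}]_{n\in\mathbb{N}}\},
\]
proving that the fiber product has exactly one end. The only nontrivial step is the accumulation argument in the third paragraph; everything else is bookkeeping. I expect no genuine obstacle, as the result is essentially a corollary in the literal sense—a specialization of Theorem \ref{theorem:ends_space_of_fiber_product2}\textbf{(2)} to the one-ended case.
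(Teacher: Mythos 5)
Your proposal is correct in substance, but it takes a genuinely different route from the paper's written proof. You reduce the corollary to Theorem \ref{theorem:ends_space_of_fiber_product2}\textbf{(2)}: since $S_{1}$ is the Loch Ness monster it has a single end, and you verify $\mathcal{E}=\emptyset$ by arguing that the infinite, closed and discrete set $\beta_{1}^{-1}(A_{2})$ cannot avoid that end, so ${\rm Ends}(S_{1})\setminus\mathcal{E}$ consists of exactly one end and the formula of the theorem collapses to the single secret end. The paper presents the corollary as an immediate consequence of that theorem but deliberately proves it by independent means: it verifies the one-end criterion of Lemma \ref{lemma:spec} directly on the fiber product, using properness of $\beta$ (Remark \ref{remarK:projection_map_beta_proper}) to obtain the compact $K'=\beta^{-1}(B)$ from a compact $B\subset\mathbb{C}$ with connected complement, and then joins any point of the complement of $K'$ to a base point lying over some $z_{M}\in A_{2}\setminus B$ by lifting paths in $\mathbb{C}\setminus B$; the infinitude of $A_{2}$ is used there to furnish branch values outside every compact set. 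Your reduction buys economy and transparency (all the end-space bookkeeping is inherited from Theorem \ref{theorem:ends_space_of_fiber_product2}, and it is explicit where the one-endedness of $S_{1}$ enters), while the paper's argument buys a short self-contained proof relying only on properness of $\beta$ and the one-endedness of $\mathbb{C}$. One imprecision in your accumulation step should be repaired: Lemma \ref{lemma:spec} does not give that $S_{1}\setminus K'$ \emph{equals} a tail $V_{n}$ of the defining sequence. The clean version is: if $V_{s}\cap\beta_{1}^{-1}(A_{2})=\emptyset$, choose by Lemma \ref{lemma:spec} a compact $K'\supset\partial V_{s}$ with $S_{1}\setminus K'$ connected; since $V_{s}$ is not relatively compact, $S_{1}\setminus K'$ meets $V_{s}$ and misses $\partial V_{s}$, hence $S_{1}\setminus K'\subset V_{s}$, so $\beta_{1}^{-1}(A_{2})\subset S_{1}\setminus V_{s}\subset K'$, which is impossible for an infinite closed discrete set. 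With that adjustment your argument is complete and the conclusion follows as you state.
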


\begin{proof}
We take a compact subset $K\subset S_{1}\times_{(\beta_{1},\beta_{2})}S_{2}$, then we shall prove that there is a compact subset $K^{'}\subset S_{1}\times_{(\beta_{1},\beta_{2})}S_{2}$ such that $K\subset K^{'}$ and  $S_{1}\times_{(\beta_{1},\beta_{2})}S_{2}\setminus K^{'}$ is connected (see Lemma \ref{lemma:spec}). 

We consider the continuous projection map $\beta:S_{1}\times_{(\beta_{1},\beta_{2})}S_{2}\to\mathbb{C}$ described in equation (\ref{eq:cont_proj_prod_fib}), then the image $\beta(K)$ is a compact subset of $\mathbb{C}$. As the complex plane $\mathbb{C}$ has only one ends, then there exists a compact subset $B\subset \mathbb{C}$ such that $\beta(K)\subset B$ and $\mathbb{C}\setminus B$ is connected. Given that the projection $\beta$ is a proper map (see Remark \ref{remarK:projection_map_beta_proper}), then the inverse image $K^{'}=\beta^{-1}(B)$ is a compact subset of $S_{1}\times_{(\beta_{1},\beta_{2})}S_{2}$ such that $K\subset K^{'}$. Finally, we should prove that $S_{1}\times_{(\beta_{1},\beta_{2})}S_{2}\setminus K^{'}$ is connected. We will fix a point $\textbf{w}_{0}$ in $S_{1}\times_{(\beta_{1},\beta_{2})}S_{2}\setminus K^{'}$ and for any $\textbf{z}\in S_{1}\times_{(\beta_{1},\beta_{2})}S_{2}\setminus K^{'}$, then we will define a path with end points $\textbf{z}$ and $\textbf{w}_{0}$, such that it does not intersect to $K^{'}$, this will imply that $S_{1}\times_{(\beta_{1},\beta_{2})}S_{2}\setminus K^{'}$ is connected. Given that $A_{2}$ is infinite, we can suppose that there is a positive integer $M\in\mathbb{N}$ such that $z_{m}\notin B$, for each $m\geq M$. We denote by $\textbf{w}_{0}$ the inverse image of $z_{M}$ under the projection $\beta$. If $\textbf{z}$ is a point in $S_{1}\times_{(\beta_{1},\beta_{2})}S_{2}\setminus K^{'}$, then there exists a path $\gamma$ in $\mathbb{C}\setminus B$ with end points $\beta(\textbf{z})$ and $z_{M}$. Using the projection $\beta$, we can lift $\gamma$ to a path $\tilde{\gamma}$ in $S_{1}\times_{(\beta_{1},\beta_{2})}S_{2}\setminus K^{'}$ with end points $\textbf{z}$ and $\textbf{w}_{0}$.

\end{proof}

As consequence of Corollary \ref{corollary:properties_irreducible_components2}, we hold a relation between the irreducible components associated to the fiber product and topological type of the surfaces $S_{1}$ and $S_{2}$.

\begin{corollary}
The irreducible component $R_{k}$ of the fiber product $S_{1}\times_{(\beta_{1},\beta_{2})}S_{2}$ is biholomorphically equivalent to the Riemann surface $S_{1}$, for each $k\in\{1,\ldots,p_{2}\}$.
\end{corollary}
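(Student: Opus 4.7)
The plan is to upgrade the holomorphic embedding $J_{k}:S_{1}\hookrightarrow S_{1}\times_{(\beta_{1},\beta_{2})}S_{2}$ constructed inside the proof of Theorem \ref{theorem:ends_space_of_fiber_product} into a biholomorphism onto the $k$-th irreducible component $R_{k}$. Recall that $J_{k}$ is defined by fixing a base point $z_{0}\in S_{1}\setminus\beta_{1}^{-1}(A_{2})$ and a preimage $w_{k}\in\beta_{2}^{-1}(\beta_{1}(z_{0}))$, extending via path-lifting through $\beta_{2}$ together with analytic continuation, and finally extending across the points of $\beta_{1}^{-1}(A_{2})$ by sending each such $z$ to the unique singular pair in the fiber product whose first coordinate is $z$.

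First I would verify that the restriction
\[
J_{k}\bigm|_{S_{1}\setminus\beta_{1}^{-1}(A_{2})}:S_{1}\setminus\beta_{1}^{-1}(A_{2})\longrightarrow \widetilde{R}_{k}
\]
is a biholomorphism. Injectivity and holomorphy follow from the analytic-continuation construction; surjectivity onto $\widetilde{R}_{k}$ is exactly what Corollary \ref{corollary:properties_irreducible_components2} provides; and the inverse is holomorphic because on $\widetilde{R}_{k}$ the projection $\pi_{1}:S_{1}\times_{(\beta_{1},\beta_{2})}S_{2}\to S_{1}$ restricts to a local biholomorphism onto the smooth locus of $S_{1}$ and it inverts $J_{k}$ there.

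The second step is to extend this biholomorphism across the discrete set of punctures. By construction, $R_{k}$ is obtained from $\widetilde{R}_{k}$ by filling in the punctures corresponding to the points of ${\rm Sing}(S_{1}\times_{(\beta_{1},\beta_{2})}S_{2})\cap R_{k}$, which via $J_{k}$ are in bijection with $\beta_{1}^{-1}(A_{2})\subset S_{1}$. Since $S_{1}$ itself furnishes a Riemann surface containing $S_{1}\setminus\beta_{1}^{-1}(A_{2})$ in which each point of $\beta_{1}^{-1}(A_{2})$ already admits a disk neighborhood, the uniqueness of the puncture-filling construction (via the Riemann removable singularity theorem applied locally around each puncture, which identifies both fillings with the same holomorphic disk) forces the biholomorphism above to extend uniquely to a biholomorphism $S_{1}\to R_{k}$.

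No serious obstacle is anticipated: the embedding $J_{k}$ was essentially already constructed in the proof of Theorem \ref{theorem:ends_space_of_fiber_product}, and Corollary \ref{corollary:properties_irreducible_components2} records the topological picture. The one subtlety worth stressing is that $R_{k}$ is the \emph{normalization} of the image $J_{k}(S_{1})\subset S_{1}\times_{(\beta_{1},\beta_{2})}S_{2}$, so the extension really lands in the abstract Riemann surface $R_{k}$ obtained by separating branches at the singular points, rather than in $J_{k}(S_{1})$ viewed as a subset of the fiber product; once this distinction is made, uniqueness of puncture-filling does the rest of the work.
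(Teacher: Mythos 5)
Your proposal is correct and follows essentially the same route as the paper: the statement is exactly the final assertion of Corollary \ref{corollary:properties_irreducible_components2}, which the paper obtains from the holomorphic embeddings $J_{k}$ constructed by path-lifting and analytic continuation in the proof of Theorem \ref{theorem:ends_space_of_fiber_product}, with $J_{k}(S_{1})$ identified (after filling the punctures of $\widetilde{R}_{k}$) with the irreducible component $R_{k}$. Your extra care in distinguishing $R_{k}$ as the puncture-filled (normalized) surface from the subset $J_{k}(S_{1})$ of the fiber product, and in invoking removable singularities to extend across $\beta_{1}^{-1}(A_{2})$, is a welcome sharpening of the same argument rather than a different one.
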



\section{Fiber product over infinite superelliptic curves}\label{sec:fiber_product_infinite_superelliptic_curve}

 Let $(w_l)_{l\in\mathbb{N}}$ be a sequence of different complex numbers such that $\lim\limits_{l\to\infty}|w_l|=\infty$. Then by the Weierstrass theorem in \cite{Palka}*{p. 498} there exists a meromorphic function $f:{\mathbb C} \to \widehat{\mathbb C}$ (called \emph{Weierstrass map}) whose simple zeroes are given by the points $w_1,w_2,\ldots$. Moreover, $f$ is uniquely determined (up multiplication) by  a zero-free entire map (for example $e^{z}$). Such functions $f$ admit the representation 
\begin{equation*}\label{eq:T_Weierstrass}
f(z)=h(z)z^{m}\prod_{l=1, w_{l} \neq 0}^{\infty}\left(1-\frac{z}{w_l}\right)E_l(z),
\end{equation*}
where $h$ is a zero-free entire function ($m=0$ if $z_{l} \neq 0$ for every $l\in\mathbb{N}$; in the other case, $m=1$ for $z_{l}=0$), 
and $E_{l}(z)$ is a function of the form
\[
E_{l}(z)=\exp \left[\sum_{s=1}^{d(l)}\frac{1}{s}\left(\frac{z}{w_{l}}\right)^{s} \right],
\]
for a suitably large non-negative integer $d(l)$.

Now, if we consider the holomorphic function $F:{\mathbb C}^{2} \to {\mathbb C}$ given by $F(z_{1},z_{2})=z_{2}^{n}-f(z_{1})$, for $n\in\mathbb{N}$ such that $n \geq 2$, then we obtain the affine plane curve 
\begin{equation*}
S(f):=\left\{(z_{1},z_{2}) \in {\mathbb C}^{2}: z_{2}^{n}=f(z_{1})\right\}.
\end{equation*}

\begin{definition}[\cite{AGHQR}*{Subsection 6.3}]
The affine curve $S(f)$ is a Riemann surface, called {\bf infinite superelliptic curve}. Further, if $n=2$, the affine curve $S(f)$ is known as  \textbf{infinite hyperelliptic curve}.
\end{definition}

The following result describes the topology type of an infinite superelliptic curve. 

\begin{theorem}[\cite{AGHQR}*{Theorem 6.12}]\label{t:infinite_hyperelliptic_curve}
 The infinite superelliptic curve $S(f)$ is a connected Riemann surface homeomorphic to the Loch Ness monster.
\end{theorem}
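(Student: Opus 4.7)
The plan is to verify, via the classification of non-compact orientable surfaces (Theorem \ref{Thm:ClassificationOfSurfaces}), the three properties that characterize the Loch Ness monster: $S(f)$ is connected, has exactly one end, and has infinite genus (the last forces that unique end to be non-planar). The main tool is the first-coordinate projection $\pi\colon S(f)\to\mathbb{C}$, $(z_{1},z_{2})\mapsto z_{1}$. I would first check that $\pi$ is a proper holomorphic branched covering of degree $n$: properness holds because for every compact $K\subset\mathbb{C}$, the set $\pi^{-1}(K)$ is closed in $S(f)\subset\mathbb{C}^{2}$ and bounded, since $|z_{2}|^{n}=|f(z_{1})|$ is bounded on $K$. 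The Riemann surface structure on $S(f)$ is given by $z_{1}$ as a local coordinate off the zeros of $f$ and, at each $(w_{l},0)$, by $z_{2}$ as a local coordinate, using that the simple zero at $w_{l}$ yields $f(z_{1})=(z_{1}-w_{l})g_{l}(z_{1})$ with $g_{l}(w_{l})\neq 0$. The branch locus of $\pi$ is precisely $\{w_{l}:l\in\mathbb{N}\}$, with a single preimage of ramification index $n$ above each $w_{l}$.

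For connectedness I would argue via monodromy. The unbranched restriction $\pi\colon S(f)\setminus\pi^{-1}(\{w_{l}\})\to\mathbb{C}\setminus\{w_{l}\}$ has $n$-element fibers of the form $\{\zeta^{k}z_{2}^{0} : 0\le k<n\}$, where $\zeta=e^{2\pi i/n}$. A small loop around any $w_{l}$ multiplies $z_{2}$ by $\zeta$, because $f$ has a simple zero there, so each local monodromy acts on the fiber as the single $n$-cycle $\sigma\colon \zeta^{k}z_{2}^{0}\mapsto \zeta^{k+1}z_{2}^{0}$. Since one $n$-cycle already acts transitively on the fiber, the cover $\pi$ is connected, and hence so is $S(f)$.

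To establish exactly one end I would invoke Lemma \ref{lemma:spec}. Given any compact $K\subset S(f)$, choose $R>0$ with $\pi(K)\subset\overline{D(0,R)}$ and set $K'=\pi^{-1}(\overline{D(0,R)})$; by properness $K'$ is compact and contains $K$. Then
\[
S(f)\setminus K' = \pi^{-1}\bigl(\mathbb{C}\setminus\overline{D(0,R)}\bigr)
\]
is a branched covering of the connected open annulus $\mathbb{C}\setminus\overline{D(0,R)}$, branched at those $w_{l}$ with $|w_{l}|>R$. Since $|w_{l}|\to\infty$ and the sequence is infinite, infinitely many $w_{l}$ remain in this exterior, so the same single-$n$-cycle monodromy argument makes the monodromy action on the fiber transitive. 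Thus $S(f)\setminus K'$ is connected, and Lemma \ref{lemma:spec} yields exactly one end.

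For infinite genus, given $g_{0}\in\mathbb{N}$ I would exhibit a compact subsurface of $S(f)$ of genus at least $g_{0}$. Pick a closed disk $D\subset\mathbb{C}$ whose interior contains exactly $r$ of the $w_{l}$ and no others; by the same transitive monodromy argument $X=\pi^{-1}(D)$ is a connected compact surface with boundary. Riemann--Hurwitz gives $\chi(X)=n\chi(D)-r(n-1)=n-r(n-1)$, while the number of boundary components of $X$ equals the number of orbits of $\sigma^{r}$ on the $n$-element fiber, namely $\gcd(n,r)$. Solving $\chi(X)=2-2g(X)-\gcd(n,r)$ gives
\[
g(X)=\frac{r(n-1)-n+2-\gcd(n,r)}{2},
\]
which tends to infinity as $r\to\infty$. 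Since there are infinitely many $w_{l}$, one may take $r$ arbitrarily large, so $S(f)$ has unbounded (hence infinite) genus. Combined with the one-end property, this forces the unique end to be non-planar, and Theorem \ref{Thm:ClassificationOfSurfaces} then identifies $S(f)$ with the Loch Ness monster. I expect the main obstacle to be the genus computation, specifically pinning down the monodromy around $\partial D$ as the product $\sigma^{r}$ of the local generators and correctly reading off the number of boundary components of $X$ from its cycle structure.
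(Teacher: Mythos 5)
This statement is not proved in the paper at all: it is imported verbatim from \cite{AGHQR}*{Theorem 6.12}, so there is no internal argument to measure yours against. Judged on its own, your proof is correct and complete. The projection $\pi$ is indeed a proper degree-$n$ branched covering whose branch locus is exactly $\{w_{l}\}_{l\in\mathbb{N}}$; simplicity of the zeros gives smoothness of $S(f)$ and a single point of ramification index $n$ over each $w_{l}$, and the analytic continuation of $z_{2}$ around a small loop at $w_{l}$ is multiplication by $e^{2\pi i/n}$, so the monodromy contains an $n$-cycle and the cover is connected. The one-end argument via Lemma \ref{lemma:spec} works because $|w_{l}|\to\infty$ and the sequence is infinite, so branch points persist outside every disk and the same transitivity applies to $\pi^{-1}\bigl(\mathbb{C}\setminus\overline{D(0,R)}\bigr)$; and the Riemann--Hurwitz computation on $X=\pi^{-1}(D)$, with $\gcd(n,r)$ boundary circles read off from the cycle structure of $\sigma^{r}$ (the boundary monodromy is $\sigma^{r}$ by the argument principle, since $f$ has exactly $r$ simple zeros inside $D$), is right and gives $g(X)\to\infty$ as $r\to\infty$ because $r(n-1)$ grows while $\gcd(n,r)\le n$ stays bounded. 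Infinite genus together with a single end forces that end to be non-planar, and Theorem \ref{Thm:ClassificationOfSurfaces} then identifies $S(f)$ with the Loch Ness monster. Two small points are worth making explicit: choose $R$, and the circle $\partial D$, so that no $w_{l}$ lies on the boundary (possible since the moduli $|w_{l}|$ form a countable set), and note that re-attaching the ramification points after the monodromy argument does not affect connectivity. Your route --- proper branched cover of $\mathbb{C}$, cyclic monodromy, exhaustion by preimages of closed disks, Riemann--Hurwitz, Specker's criterion --- is the standard one for results of this kind and is consonant with the techniques this paper itself uses later (e.g.\ the proper-map one-end argument and the exhaustion built in Section \ref{sec:ends_spaces_fiber_product_riemann_surfaces}).
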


The following result describes the conditions when two infinite hyperelliptic curves are biholomorphically equivalent.

\begin{theorem}[\cite{AGHQR}*{Theorem.6.13}]\label{equivalentehyperellipticcurves}
If $n=2$, that is, for the hyperelliptic case, the pairs $(S(f),G_f)$ and $(S(g),G_g)$ are biholomorphically equivalent if and only if there exists a holomorphic automorphism of complex plane $\mathbb{C}$ carrying the zeros of $f$ onto the zeros of $g$.
\end{theorem}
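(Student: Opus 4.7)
My plan is to prove the two directions separately. Write $\sigma_f(z_1,z_2) = (z_1,-z_2)$ for the generator of the hyperelliptic involution group $G_f \cong \mathbb{Z}_2$ on $S(f)$, and similarly $\sigma_g$ on $S(g)$. The pairs $(S(f), G_f)$ and $(S(g), G_g)$ being biholomorphically equivalent means there is a biholomorphism $\phi: S(f) \to S(g)$ with $\phi \circ \sigma_f = \sigma_g \circ \phi$.

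For the sufficiency direction, I would start from an automorphism $\psi(z) = az+b$ of $\mathbb{C}$ carrying the zeros of $f$ onto those of $g$ and lift it to the desired equivariant biholomorphism. The entire functions $f$ and $g \circ \psi$ have exactly the same simple zeros, so their ratio $u := (g \circ \psi)/f$ extends across those common zeros to a zero-free entire function on $\mathbb{C}$. Since $\mathbb{C}$ is simply connected, $u = e^{h}$ for some entire $h$; in particular $u = \eta^{2}$ with $\eta := e^{h/2}$ entire and zero-free. The formula $\phi(z_1, z_2) := (\psi(z_1),\, \eta(z_1)\, z_2)$ then defines an equivariant biholomorphism, since $(\eta(z_1) z_2)^2 = \eta(z_1)^2 f(z_1) = g(\psi(z_1))$, and its inverse is built analogously from $\psi^{-1}$ and $1/\eta$. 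Equivariance $\phi(\sigma_f(z_1,z_2)) = (\psi(z_1), -\eta(z_1) z_2) = \sigma_g(\phi(z_1,z_2))$ is immediate.

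For the necessity direction, given an equivariant biholomorphism $\phi$, I would pass to the quotient. The projection $\pi_f: S(f) \to \mathbb{C}$, $(z_1, z_2) \mapsto z_1$, is $G_f$-invariant and identifies $S(f)/G_f$ with $\mathbb{C}$ as Riemann surfaces, and analogously for $g$. Hence $\phi$ descends to a biholomorphism $\bar{\phi}: \mathbb{C} \to \mathbb{C}$, necessarily of the form $z \mapsto az+b$. The ramification locus of $\pi_f$ is the fixed-point set of $\sigma_f$, namely $\{(z_1,0) : f(z_1)=0\}$, whose image under $\pi_f$ is the zero set of $f$; by equivariance, $\bar{\phi}$ must send this set onto the zero set of $g$, producing the claimed affine automorphism.

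The step I expect to be the main obstacle is the identification $S(f)/G_f \cong \mathbb{C}$ as Riemann surfaces in the presence of an infinite discrete branch locus. At each zero $w_l$ of $f$, writing $f(z_1) = (z_1 - w_l)\, h_l(z_1)$ with $h_l$ a local unit, the coordinate $z_2$ is a local uniformizer on $S(f)$ near $(w_l, 0)$, and $\sigma_f$ acts by $z_2 \mapsto -z_2$. The ring of $\sigma_f$-invariant holomorphic germs is generated by $z_2^{2} = (z_1 - w_l) h_l(z_1)$, equivalently by $z_1 - w_l$, so the quotient acquires the standard complex structure of $\mathbb{C}$ at each branch point. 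Gluing these local models with the obvious biholomorphism on the unramified locus yields the required global isomorphism of Riemann surfaces, after which the rest of the necessity argument is formal.
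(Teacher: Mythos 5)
This theorem is not proved in the paper at all: it is imported verbatim from \cite{AGHQR}*{Theorem 6.13} and used as a black box (in the proof of Theorem \ref{theorem:two-isomorphic-regular-riemann-surfaces}), so there is no internal argument to compare yours against. Your proof is correct and is the standard argument one would expect in the cited source: for sufficiency, the hypothesis that $\psi$ is injective and that all zeros of $f$ and $g$ are simple is exactly what makes $u=(g\circ\psi)/f$ a zero-free entire function, hence $u=\eta^{2}$ with $\eta$ entire, and $\phi(z_{1},z_{2})=(\psi(z_{1}),\eta(z_{1})z_{2})$ is a $G$-equivariant biholomorphism; for necessity, the descent of an equivariant biholomorphism to the quotients $S(f)/G_{f}\cong\mathbb{C}\cong S(g)/G_{g}$, the fact that every automorphism of $\mathbb{C}$ is affine, and the identification of the branch locus with the fixed-point set of the involution give the affine map carrying the zeros of $f$ onto those of $g$. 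You also correctly isolated and resolved the only delicate point, namely that the quotient by the hyperelliptic involution carries the plane's complex structure even though the branch locus is infinite (local coordinate $z_{2}$ at $(w_{l},0)$, invariants generated by $z_{2}^{2}$, i.e. by $z_{1}-w_{l}$), so the proposal stands as a complete proof.
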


\subsection{Fiber product over infinite superelliptic curves.} We consider a sequence  of different complex numbers $(w_{l})_{l\in\mathbb{N}}$ such that $\lim \limits_{l\to \infty}\vert w_{l}\vert =\infty$. Let $f$ be denote the Weierstrass map with simple zeros at the points of the sequence $(w_{l})_{l\in\mathbb{N}}$. Now, we fix a subset $A$ of the complex plane $\mathbb{C}$ and we define a map $g$ as follows: 

\begin{itemize}
    \item[\textbf{(1)}] If $A$ is a finite set of even cardinality of points of the sequence $(w_{l})_{l\in\mathbb{N}}$, then   
\[
g(z_1)= \prod\limits_{a\in A}(z_1-a).
\]
\item[\textbf{(2)}] If $A$ is a subsequence of $(w_{l})_{l\in\mathbb{N}}$, which also diverges, then $g$ is a Weierstrass map with simple zeros at the points in $A$.
\end{itemize}

Associated to the holomorphic functions above, we have the following affine plane curves 
\begin{align}
    S(f) &= \left\{(z_{1},z_{2})\in\mathbb{C}^2:z_{2}^{p}=f(z_{1})\right\}, \label{eq:Cf_1}\\
    S(g)&=\left\{(z_1,z_3)\in\mathbb{C}^2: z_{3}^{q}=g(z_1)\right\}, \label{eq:Cg_1}
\end{align}
such that $p,q\geq 2$.
  
From the Theorem \ref{t:infinite_hyperelliptic_curve}, it follows that the infinite superelliptic curve  $S(f)$ described in equation (\ref{eq:Cf_1}) is a Riemann surface  topologically equivalent to the Loch Ness monster. Moreover, as consequence of the implicit function theorem  \cite{KraPa}, \cite{Miranda}*{p. 10, Theorem 2.1}, the affine plane curve $S(g)$ defined in the equation (\ref{eq:Cg_1}) admits a Riemann surface structure. It has the following topological type:

\begin{itemize}
\item[\textbf{(a)}] If the subset $A\subset \mathbb{C}$ is finite and $\vert A\vert=2k$, with $k\geq 1$, then  by Riemann-Hurwitz's Theorem \cite{Miranda}*{Theorem 4.16} the affine plane curve $S(g)$ has $q$ non-planar ends and genus equal to $k-1$. 

\item[\textbf{(b)}] If $A$ is a subsequence of $(w_{l})_{l\in\mathbb{N}}$, then by Theorem \ref{t:infinite_hyperelliptic_curve}, it follows that the affine plane curve  $S(g)$ is an infinite superelliptic curve topologically equivalent to the Loch Ness monster.
\end{itemize}

Now, we consider $\beta_{1}: S(f)\to \mathbb{C}$ and $\beta_{2}:S(g)\to\mathbb{C}$ the projection maps onto first coordinate and take the fiber product
\begin{equation}\label{eq:fiber_product}
S(f)\times_{(\beta_{1},\beta_{2})}S(g)=\left\{(z_{1},z_{2},z_{3})\in\mathbb{C}^3: z_{2}^{p}=f(z_1), \, z_{3}^{q}=g(z_1)\right\}.
\end{equation}
For convenience, in this section we will denote the above fiber product by  
\begin{equation}
\mathcal{S}(f,g):=S(f)\times_{(\beta_{1},\beta_{2})}S(g).
\end{equation}
Thus, we consider the projection map 
$\beta:\mathcal{S}(f,g)\to\mathbb{C}$,
defined as in equation (\ref{eq:cont_proj_prod_fib}), where
\begin{equation}\label{eq:projections_pi_1_2}
\beta=\beta_1\circ\pi_1=\beta_2\circ\pi_2,
\end{equation}
and the maps  $\pi_1:\mathcal{S}(f,g)\to S(f)$ and  $\pi_2:\mathcal{S}(f,g)\to S(g)$ are given by $\pi_1(z_1,z_2,z_{3})=(z_{1},z_{2})$ and $\pi_2(z_1,z_2,z_{3})=(z_{1},z_{3})$, respectively.

\begin{remark}\label{remark:proper_map}
The projection maps $\beta_{1}$ and $\beta_{2}$ are proper branched covering maps. Moreover, for each $z \in \mathbb{C}\setminus \{w_{l}:l\in\mathbb{N}\}$, the fiber $\beta^{-1}_{1}(z)$ consists of $p\geq 2$ points; and for each $z \in \mathbb{C}\setminus A$, the fiber $\beta^{-1}_{2}(z)$ consists of $q\geq 2$ points. As consequence, the map $\beta$ is proper, because $\beta^{-1}(K)$ is a closed subset of the compact $K\times \left(h^{-1} [ f(K)] \right)\times h^{-1}[g(K)]$, where $h$ is the complex  $q$-th power map $h(w)=w^q$.
\end{remark}

 On the fiber product $\mathcal{S}(f,g)$ the implicit function theorem does not apply everywhere, more precisely, on the points $(z_1, z_2, z_3)$ such that $z_1 \in A$. By Proposition \ref{pro:RSA} we conclude that the fiber product $\mathcal{S}(f,g)$ is a singular Riemann surface whose locus of singular points is the discrete set
 \begin{equation}\label{eq:singular_points}
     {\rm Sing}(\mathcal{S}(f,g))=\left\{(z_1,z_2,z_3)\in \mathcal{S}(f,g): z_1\in A\right\},
 \end{equation}
and the subspace \begin{equation}\label{eq:ob_M}
\mathcal{S}(f,g)\setminus {\rm Sing}(\mathcal{S}(f,g))
 \end{equation} 
 of the fiber product $\mathcal{S}(f,g)$ is a Riemann surface.  
 
 From the Remark \ref{remark:proper_map} and Theorem \ref{theorem:ends_space_of_fiber_product} it holds immediately the following results, which describe the connectedness and the ends space of the fiber product, and the ends space of the normal fiber product.
 
\begin{theorem}\label{theorem:topology-of-fiber-product-superelliptic-curve}
The singular Riemann surface $\mathcal{S}(f,g)$ (given as in (\ref{eq:fiber_product})) is connected. Moreover, the end space of  $\mathcal{S}(f,g)$ has:
\begin{itemize}
    \item[\textbf{(1)}] one end, if  $A$ is infinite,
    
    \item[\textbf{(2)}] $q$ ends, if $A$ is finite.
\end{itemize}
\end{theorem}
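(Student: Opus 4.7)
The plan is to recognize that the setup of Section \ref{sec:fiber_product_infinite_superelliptic_curve} is precisely an instance of the framework developed in Subsection 3.1, and then to deduce both statements from Theorems \ref{theorem:ends_space_of_fiber_product} and \ref{theorem:ends_space_of_fiber_product2}. Indeed, by Theorem \ref{t:infinite_hyperelliptic_curve} the surface $S(f)$ is the Loch Ness monster, hence of infinite-type; by Remark \ref{remark:proper_map} the maps $\beta_{i}:S_i\to\mathbb{C}$ are proper branched coverings with branch sets $A_{1}=\{w_{l}:l\in\mathbb{N}\}$ and $A_{2}=A$; and by construction $A_{2}\subset A_{1}$ with $A_{1}$ infinite. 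The singular locus description \eqref{eq:singular_points} matches the hypothesis of Theorem \ref{theorem:ends_space_of_fiber_product}, so that theorem immediately yields that $\mathcal{S}(f,g)$ is connected.

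For the ends space, the strategy is to invoke Theorem \ref{theorem:ends_space_of_fiber_product2} with $S_{1}=S(f)$ and $S_{2}=S(g)$, noting that the covering degree $p_{2}$ in that theorem equals $q$ here. In case \textbf{(2)}, when $A$ is finite, part \textbf{(1)} of Theorem \ref{theorem:ends_space_of_fiber_product2} yields
\[
{\rm Ends}(\mathcal{S}(f,g))\;\cong\;\bigsqcup_{k=1}^{q}\tilde{J}_{k}({\rm Ends}(S(f))).
\]
Because ${\rm Ends}(S(f))$ is a single point (Theorem \ref{t:infinite_hyperelliptic_curve}), this union has exactly $q$ elements.

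In case \textbf{(1)}, when $A$ is infinite, I would apply part \textbf{(2)} of Theorem \ref{theorem:ends_space_of_fiber_product2}. The key verification is that the subset $\mathcal{E}\subset {\rm Ends}(S(f))$ on which the fiber $\beta_{1}^{-1}(A)$ does not accumulate is empty, so that ${\rm Ends}(S(f))\setminus\mathcal{E}$ consists of the unique end of $S(f)$ (the LNM), as required. This is where the only real checking is needed: since $A$ is an infinite subsequence of $(w_l)_{l\in\mathbb{N}}$ with $|w_l|\to\infty$, the preimage $\beta_{1}^{-1}(A)\subset S(f)$ is infinite and discrete, and properness of $\beta_1$ (Remark \ref{remark:proper_map}) forces this set to exit every compact subset of $S(f)$; since $S(f)$ has only one end, the set must accumulate at that end. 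Hence $\mathcal{E}=\emptyset$ and Theorem \ref{theorem:ends_space_of_fiber_product2}\textbf{(2)} gives
\[
{\rm Ends}(\mathcal{S}(f,g))=\{[\tilde{U}_{n}]_{n\in\mathbb{N}}\}\cup\Big(\bigsqcup_{k=1}^{q}\tilde{J}_{k}(\emptyset)\Big)=\{[\tilde{U}_{n}]_{n\in\mathbb{N}}\},
\]
a single end.

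The main obstacle I anticipate is simply the accumulation argument in case \textbf{(1)}: making sure that properness of $\beta_1$ together with $|w_l|\to\infty$ really does force $\beta_{1}^{-1}(A)$ to escape every compact set of $S(f)$. Everything else is bookkeeping: verifying that the hypotheses of the general theorems match the present data and reading off the conclusion.
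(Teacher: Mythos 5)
Your proposal is correct and is essentially the paper's own (very terse) argument: the paper deduces the theorem ``immediately'' from Remark \ref{remark:proper_map} together with Theorem \ref{theorem:ends_space_of_fiber_product} for connectedness and the Section \ref{sec:ends_spaces_fiber_product_riemann_surfaces} results for the end count, which are exactly the citations you make, with $p_2=q$. Your verification that $\mathcal{E}=\emptyset$ when $A$ is infinite plays the role of the paper's corollary on the Loch Ness monster case (Corollary 3.9), so the route is the same, merely spelled out in more detail.
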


\begin{corollary}
For each $k\in\{1,\ldots,q\}$, the irreducible component $R_{k}$ of the fiber product $\mathcal{S}(f,g)$ is biholomorphically equivalent to the Loch Ness monster $S(f)$.
\end{corollary}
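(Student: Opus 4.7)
The plan is to instantiate Corollary \ref{corollary:properties_irreducible_components2}, which was established for an abstract pair $(S_1,\beta_1),(S_2,\beta_2)$, in the concrete superelliptic setting $(S(f),\beta_1),(S(g),\beta_2)$, and read off the biholomorphism type of $R_k$ from its last sentence.

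First I would verify that the superelliptic data fits the framework of Subsection 3.1: the surface $S(f)$ is of infinite type, being homeomorphic to the Loch Ness monster by Theorem \ref{t:infinite_hyperelliptic_curve}; the branch loci of $\beta_1$ and $\beta_2$ are the discrete sets $A_1=\{w_l:l\in\mathbb{N}\}$ and $A_2=A\subseteq A_1$, with $A_1$ infinite; and by Remark \ref{remark:proper_map} the two projections are proper branched coverings whose generic fibers have cardinalities $p\geq 2$ and $q\geq 2$, respectively.

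Next I would check the singular-locus hypothesis of Theorem \ref{theorem:ends_space_of_fiber_product}: equation (\ref{eq:singular_points}) identifies $\mathrm{Sing}(\mathcal{S}(f,g))$ with $\{(z_1,z_2,z_3)\in\mathcal{S}(f,g):z_1\in A\}$, which, under the canonical bijection $\mathcal{S}(f,g)\cong S(f)\times_{(\beta_1,\beta_2)}S(g)$ given by $(z_1,z_2,z_3)\leftrightarrow((z_1,z_2),(z_1,z_3))$, is exactly $\{(p_1,p_2):\beta_1(p_1)=\beta_2(p_2)\in A_2\}$. With this in place, Corollary \ref{corollary:properties_irreducible_components2} immediately yields that for each $k\in\{1,\ldots,q\}$ the irreducible component $R_k$ is biholomorphic to $S_1=S(f)$, which is exactly the claim. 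No substantive obstacle arises; the only point that requires a brief sanity check is the matching of the singular loci, which is immediate from equation (\ref{eq:singular_points}).
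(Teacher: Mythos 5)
Your proposal is correct and follows essentially the same route as the paper, which derives this corollary by instantiating the Section 3 machinery (in particular Corollary \ref{corollary:properties_irreducible_components2}, via Remark \ref{remark:proper_map} and the singular-locus description in equation (\ref{eq:singular_points})) with $S_1=S(f)$, $S_2=S(g)$, $A_1=\{w_l:l\in\mathbb{N}\}$, $A_2=A$, and $p_2=q$, together with Theorem \ref{t:infinite_hyperelliptic_curve} identifying $S(f)$ with the Loch Ness monster. Your explicit verification of the hypotheses is exactly the check the paper leaves implicit when it says the result holds immediately.
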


Now, we give necessary and sufficient conditions for isomorphisms of arbitrary singular Riemann surfaces coming from hyperelliptic curves. 

We consider an arbitrary subset $B$ of $\{w_{l}:l\in\mathbb{N}\}$, which can be finite or infinite satisfying the same conditions of the set $A$, such subset also defines a Weierstrass function or a polynomial map $h$, depending of the cardinality of $B$, whose simple zeros are the complex numbers in $B$. Hence, as in equation (\ref{eq:Cg_1}) we define the affine plane curve 
\begin{equation}
S(h)=\left\{(z_1,z_3)\in\mathbb{C}^2:z_3^2=h(z_1)\right\}.
\end{equation}
Now, we consider the superellyptic curve $S(f)$ define in equation (\ref{eq:Cf_1}) and the projection maps onto the first coordinate $\beta_{1}: S(f)\to \mathbb{C}$ and $\beta_{3}:S(h)\to\mathbb{C}$, then we hold other singular Riemann surface as the fiber product
\begin{equation}\label{eq:fiber_product_second}
\mathcal{S}(f,h)=\left\{(z_{1},z_{2},z_{3})\in\mathbb{C}^3: z_{2}^{2}=f(z_1), \, z_{3}^{2}=h(z_1)\right\}.
\end{equation}

With all above it holds the next result:

\begin{theorem}\label{theorem:two-isomorphic-regular-riemann-surfaces}
The singular Riemann surfaces $\mathcal{S}(f,g)$ and $\mathcal{S}(f,h)$ are isomorphic if and only if  the set $A$ and $B$ have the same cardinality and  there exist a biholomorphism of the complex plane $t:\mathbb{C}\to \mathbb{C}$ such that
it leaves invariant the points of the sequence $(w_{l})_{l\in\mathbb{N}}$ and $t(A)=B$.
\end{theorem}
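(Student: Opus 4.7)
My plan is to handle the two implications separately. For sufficiency, I will lift the biholomorphism $t$ coordinatewise, exploiting the existence of holomorphic square roots of zero-free entire functions on the simply connected plane $\mathbb{C}$. For necessity, I will analyze how $\Phi$ acts on each irreducible component of the fiber products and then descend via the projection $\beta$ to recover a base biholomorphism $t:\mathbb{C}\to\mathbb{C}$.

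\textbf{Sufficiency.} Assume $t:\mathbb{C}\to\mathbb{C}$ is a biholomorphism with $t(\{w_l\}_{l\in\mathbb{N}})=\{w_l\}_{l\in\mathbb{N}}$ and $t(A)=B$. Since $f\circ t$ and $f$ are entire functions with the same simple zero set $\{w_l\}$, their ratio is a zero-free entire function on $\mathbb{C}$, hence of the form $e^{\varphi_1}$ for some entire $\varphi_1$. Analogously, $h\circ t$ has simple zeros exactly on $t^{-1}(B)=A$, matching those of $g$, so $h\circ t=e^{\varphi_2}g$ for an entire $\varphi_2$. I then define
\[
\Phi:\mathcal{S}(f,g)\to\mathcal{S}(f,h),\qquad (z_1,z_2,z_3)\longmapsto\bigl(t(z_1),\,e^{\varphi_1(z_1)/2}z_2,\,e^{\varphi_2(z_1)/2}z_3\bigr).
\]
A direct check shows $\Phi$ lands in $\mathcal{S}(f,h)$, is holomorphic off the singular locus, and sends $\{(a,0,0):a\in A\}={\rm Sing}(\mathcal{S}(f,g))$ bijectively onto ${\rm Sing}(\mathcal{S}(f,h))$. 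Replacing $t$ by $t^{-1}$ produces the inverse, so $\Phi$ is the required isomorphism.

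\textbf{Necessity.} Suppose $\Phi:\mathcal{S}(f,g)\to\mathcal{S}(f,h)$ is an isomorphism. Because $\Phi$ maps singular locus bijectively to singular locus, and by (\ref{eq:singular_points}) these loci are in natural bijection with $A$ and $B$ respectively, we obtain $|A|=|B|$. By the corollary preceding the theorem, $\mathcal{S}(f,g)$ has two irreducible components $R_1,R_2$, each biholomorphic to $S(f)$ via the restriction $\psi_i:=\pi_1|_{R_i}$, meeting exactly along the singular locus, and similarly for $\mathcal{S}(f,h)$ with components $R_i'$ and identifications $\psi_i'$. By definition of $\beta$ one has $\beta|_{R_i}=\beta_1\circ\psi_i$ and $\beta'|_{R_i'}=\beta_1\circ\psi_i'$. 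The isomorphism $\Phi$ permutes the components, say via $\sigma$, and induces biholomorphisms $\phi_i:=\psi_{\sigma(i)}'\circ\Phi|_{R_i}\circ\psi_i^{-1}:S(f)\to S(f)$. By Theorem \ref{equivalentehyperellipticcurves} each $\phi_i$ corresponds to an affine biholomorphism $t_i:\mathbb{C}\to\mathbb{C}$ with $t_i(\{w_l\})=\{w_l\}$ and $\beta_1\circ\phi_i=t_i\circ\beta_1$; transporting back yields $\beta'\circ\Phi=t_i\circ\beta$ on $R_i$. Since $R_1\cap R_2={\rm Sing}(\mathcal{S}(f,g))$ and $\beta$ sends it onto $A$, both formulas agree there, forcing $t_1(a)=t_2(a)$ for every $a\in A$; because affine biholomorphisms of $\mathbb{C}$ are determined by any two distinct values and $|A|\geq 2$, we conclude $t_1=t_2=:t$. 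Then $\beta'\circ\Phi=t\circ\beta$ globally, so $\Phi$ carries singular points with $\beta$-coordinate in $A$ to singular points with $\beta'$-coordinate in $B$, yielding $t(A)=B$, while $t(\{w_l\})=\{w_l\}$ holds by construction.

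\textbf{Main obstacle.} The delicate step is invoking Theorem \ref{equivalentehyperellipticcurves} for the induced biholomorphisms $\phi_i$: that statement concerns equivalences of the pair $(S(f),G_f)$, so one must verify that $\phi_i$ normalizes the hyperelliptic group $G_f$. This should follow from the canonical nature of the hyperelliptic involution on the Loch Ness monster carrying the structure $\beta_1$, namely its centrality in the relevant automorphism group as developed in \cite{AGHQR}; granting this, every biholomorphism of $S(f)$ automatically descends via $\beta_1$ to an affine biholomorphism of $\mathbb{C}$.
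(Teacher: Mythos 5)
Your proof is correct, and in the forward direction it takes a genuinely different route from the paper's. For sufficiency the paper first invokes Theorem \ref{equivalentehyperellipticcurves} to produce a biholomorphism $T:S(f)\to S(f)$ covering $t$ and then lifts $T$ to a map $\tilde{T}:\mathcal{S}(f,g)\to\mathcal{S}(f,h)$ by path lifting and analytic continuation; you instead write the isomorphism down in coordinates, $\Phi(z_1,z_2,z_3)=\bigl(t(z_1),e^{\varphi_1(z_1)/2}z_2,e^{\varphi_2(z_1)/2}z_3\bigr)$, after factoring $f\circ t=e^{\varphi_1}f$ and $h\circ t=e^{\varphi_2}g$ from the matching simple zero sets (legitimate because $t$ preserves $\{w_l\}$ and $t^{-1}(B)=A$, and $\mathbb{C}$ is simply connected). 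This is cleaner: it avoids any well-definedness/monodromy discussion for the lifted map, gives holomorphy through the singular points for free, and exhibits the inverse explicitly. For necessity your argument essentially coincides with the paper's: restrict the isomorphism to the irreducible components (each identified with $S(f)$ via $\pi_1$), extend across the discrete set of punctures, and apply Theorem \ref{equivalentehyperellipticcurves} to descend to a plane automorphism; your extra consistency step $t_1=t_2$ is harmless but not strictly needed, since $t_1(A)=B$ already follows from one component together with $\Phi({\rm Sing}(\mathcal{S}(f,g)))={\rm Sing}(\mathcal{S}(f,h))$. Finally, the obstacle you flag---that Theorem \ref{equivalentehyperellipticcurves} concerns pairs $(S(f),G_f)$, so one must know that every biholomorphism of $S(f)$ normalizes the hyperelliptic group and hence descends under $\beta_1$---is present in the paper's own proof in exactly the same form (it applies Theorem \ref{equivalentehyperellipticcurves} to the extended map $H$ without comment), so your treatment relies on the same input from \cite{AGHQR} and is no less rigorous than the paper's.
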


\begin{proof}
 We shall prove the \emph{necessary} condition. Suppose that the set  $A$ and $B$ have the same cardinality and there is a biholomorphism   $t:\mathbb{C}\to \mathbb{C}$ such that it leaves invariant the points of the sequence $(w_{n})_{n\in\mathbb{N}}$ and $t(A)=B$. Theorem \ref{equivalentehyperellipticcurves} implies that there exists a biholomorphism $T:S(f)\to S(f)$ satisfying the following properties:
 \begin{enumerate}
 \item[$\ast$] leaves invariant the subset  $\{(w_{l},\textbf{0}):l\in\mathbb{N}\}\subset S(f)$,
 \item[$\ast$]  sends the set $\{(a,\textbf{0}):a\in A\}$ onto the set $\{(b,\textbf{0}):b\in B\}$,
 \item[$\ast$] the following diagram
 \begin{equation}
\begin{tikzcd}
S(f)\arrow{d}[swap]{\beta_{1}} \arrow{r}{T}          &S(f)\arrow{d}{\beta_{1}}\\
\mathbb{C} \arrow{r}[swap]{t}          &\mathbb{C}
\end{tikzcd}
\end{equation}
 is commutative, \emph{i.e.}, $\beta_{1}\circ T =t\circ \beta_{1}$. 
 \end{enumerate}
 
We consider the projection maps  $\pi_{1}:\mathcal{S}(f,g)\to S(f)$ and $\overline{\pi}_{1}:\mathcal{S}(f,h)\to S(f)$, which send the triplet $(z_{1},z_{2},z_{3})$ on the pair $(z_1,z_{2})$. We will define the function $\tilde{T}:\mathcal{S}(f,g)\to \mathcal{S}(f,h)$ as a lifting of the map $T$, such that the following diagram
\begin{equation}
\begin{tikzcd}
{\mathcal{S}(f,g)} \arrow{d}[swap]{\pi_1} \arrow{r}{\tilde{T}} & \mathcal{S}(f,h) \arrow{d}{\overline{\pi}_1}\\
S(f) \arrow{r}[swap]{T}          &S(f)
\end{tikzcd}
\end{equation}
is commutative, \emph{i.e.}, $T\circ \pi_{1}=\overline{\pi}_{1}\circ \tilde{T}$. We fix ${\textbf z}\in \mathcal{S}(f,g)$ and chose $\textbf{w}=\tilde{T}(\textbf{z})$ such that $T\circ \pi_1(\textbf{z})=\overline{\pi}_{1}\circ \tilde{T}(\textbf{z})$. Now, we consider a point $\textbf{s}\in \mathcal{S}(f,g)$ and we shall define the image of $\textbf{s}$ under $\tilde{T}$. Then we take a path $\gamma$ in $S(f)$ having ends points $\pi_1(\textbf{z})$ and $\pi_{1}(\textbf{s})$. Hence, for the path $T\circ \gamma$ there is a lifting path $\tilde{\gamma}$ in $\mathcal{S}(f,h)$ having ends points $\tilde{T}(\textbf{z})$ and $\tilde{T}(\textbf{s})$. By analytic continuation the map $\tilde{T}:\mathcal{S}(f,g)\to \mathcal{S}(f,h)$ is homeomorphis such that $\tilde{T}({\rm Sin}(\mathcal{S}(f,g)))={\rm Sing}(\mathcal{S}(f,h))$ and the restriction $\tilde{T}:\mathcal{S}(f,g)\setminus {\rm Sing}(\mathcal{S}(f,g))\to \mathcal{S}(f,h)\setminus {\rm Sing}(\mathcal{S}(f,h))$ is a biholomorphims. Thus, we conclude that $\tilde{T}$ is a isomorphism.

Now, we shall prove the \emph{sufficient} condition. By hypothesis, there exists a homeomorphism  $\tilde{T}: \mathcal{S}(f,g)\to\mathcal{S}(f,h)$ such that
\begin{itemize}
    \item[\textbf{(1)}] $\tilde{T}({\rm Sin}(\mathcal{S}(f,g)))={\rm Sing}(\mathcal{S}(f,h))$. 
    \item[\textbf{(2)}] The restriction map $\tilde{T}:\mathcal{S}(f,g)\setminus {\rm Sing}(\mathcal{S}(f,g))\to \mathcal{S}(f,h)\setminus {\rm Sing}(\mathcal{S}(f,h))$ is a biholomorphic.
\end{itemize}
Recall that the set of singular points of $\mathcal{S}(f,h)$ and $\mathcal{S}(f,h)$ are the sets ${\rm Sing}(\mathcal{S}(f,g))=\{(z_1,z_2,z_3)\in\mathcal{S}(f,g): z_1\in A\}$ and ${\rm Sing}(\mathcal{S}(f,h))=\{(z_1,z_2,z_3)\in\mathcal{S}(f,h):z_{1}\in B\}$, respectively.

From the proof of Theorem \ref{theorem:ends_space_of_fiber_product}, it follows that each connected component of 
$$\mathcal{S}(f,g)\setminus {\rm Sing}(\mathcal{S}(f,g))$$
is biholomorphic equivalent to the punted Loch Ness monster $$S(f)\setminus \{(z_1,z_2)\in S(f): z_{1}\in A\}.$$ Analogously, each connected component of $$\mathcal{S}(f,h)\setminus {\rm Sing}(\mathcal{S}(f,h))$$
is biholomorphic equivalent to the punted Loch Ness monster
$$S(f)\setminus \{(z_1,z_2)\in S(f): z_1\in B\}.$$
Hence, the restriction of $\tilde{T}$ to each connected component of $\mathcal{S}(f,g)\setminus {\rm Sing}(\mathcal{S}(f,g))$ defines a biholomorphism $T$ from $S(f)\setminus \{(z_1,z_2)\in S(f):z_{1}\in A\}$ to $S(f)\setminus\{(z_{1},z_{2})\in S(f): z_1\in B\}$, as the sets of points $\{(z_1,z_2)\in S(f):z_{1}\in A\}$ and $\{(z_{1},z_{2})\in S(f): z_1\in B\}$ of $S(f)$ are discrete, then the map $T$ can be extended to a biholomirphic map $H:S(f)\to S(f)$. From Theorem \ref{equivalentehyperellipticcurves} it follows that there exists a biholomophic $t$ of the complex plane $\mathbb{C}$ such that the following diagram
\begin{equation*}
\begin{tikzcd}
{S(f)} \arrow{d}[swap]{\beta_1} \arrow{r}{H} & S(f) \arrow{d}{\beta_1}\\
\mathbb{C} \arrow{r}[swap]{t}          &\mathbb{C}
\end{tikzcd}
\end{equation*}
is commutative \emph{i.e.}, $t\circ \beta_{1}=\beta_{1}\circ H$. As the isomorphism $\tilde{T}$ sends the singular points of $\mathcal{S}(f,g)$ onto the singular points of $\mathcal{S}(f,h)$ and $H$ is a holomorphic extension, then it must be happen $t(A)=B$. 
\end{proof}

\subsection{Double covering of infinite hyperelliptic curves.}  
We consider the fiber product of surfaces (see equation (\ref{eq:fiber_product})): 
 \begin{equation*}
\mathcal{S}(f,g)=\left\{(z_{1},z_{2},z_{3})\in\mathbb{C}^3: z_{2}^{2}=f(z_1), \, z_{3}^{2}=g(z_1)\right\},
\end{equation*}
 which admits two automorphisms $\alpha_{1}$ and $\alpha_{2}$ of order two, which are  given by
\begin{equation}
    \alpha_{1}(z_{1},z_{2},z_{3})=(z_{1},z_{2},-z_{3}),\quad \alpha_{2}(z_{1},z_{2},z_{3})=(z_{1},-z_{2},z_{3}).
\end{equation}
The group $\langle \alpha_1, \alpha_2 \rangle$ is a subgroup of ${\rm Aut}(\mathcal{S}(f,g))$ isomorphic to $\mathbb{Z}_2\oplus\mathbb{Z}_2$. So, the cyclic subgroup $\langle \alpha_{i}\rangle$ acts on the singular Riemann surface $\mathcal{S}(f,g)$, for each $i\in\{1,2\}$. 

On the other hand, let us remember that: if $S_1$ is a surface (singular or not) and  $G$ is a finite group, a {\em Galois cover} of $S_1$ with group $G$, shortly a $G$-{\em cover} of $S_1$, is a finite morphism $\pi:S_2\to S_1$, where $S_2$ is a surface (singular or not) with an effective action by $G$, such that $\pi$ is $G$-invariant and induces an isomorphism $S_2/G\cong S_1$.

With all above, the following result ensures that the quotient surface $\mathcal{S}(f,g)/\langle \alpha_{i} \rangle$ is a Riemann surface and describes its topological type.

\vspace{3mm}

\begin{lemma}\label{lemma:quotient}
The quotient surfaces $\mathcal{S}(f,g)/\langle \alpha_{1} \rangle$ and $\mathcal{S}(f,g)/\langle \alpha_{2} \rangle$ are Riemann surfaces biholomophic to $S(f)$ and $S(g)$, respectively.
\end{lemma}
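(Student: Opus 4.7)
The plan is to exhibit the desired biholomorphisms explicitly through the two coordinate projections $\pi_{1}:\mathcal{S}(f,g)\to S(f)$ and $\pi_{2}:\mathcal{S}(f,g)\to S(g)$ defined in equation (\ref{eq:projections_pi_1_2}). Since $\pi_{1}(z_{1},z_{2},z_{3})=(z_{1},z_{2})$, one immediately checks
\[
\pi_{1}\circ\alpha_{1}=\pi_{1},\qquad \pi_{2}\circ\alpha_{2}=\pi_{2},
\]
so each $\pi_{i}$ descends to a continuous map $\tilde{\pi}_{i}:\mathcal{S}(f,g)/\langle\alpha_{i}\rangle\to S(f)$ (resp.\ $S(g)$). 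By symmetry I only describe the argument for $\alpha_{1}$.

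Surjectivity and injectivity of $\tilde{\pi}_{1}$ would be verified by direct inspection of fibers: given $(z_{1},z_{2})\in S(f)$, the equation $z_{3}^{2}=g(z_{1})$ has two preimages that are interchanged by $\alpha_{1}$ when $g(z_{1})\neq 0$, and a single preimage $z_{3}=0$ fixed by $\alpha_{1}$ when $g(z_{1})=0$ (that is, when $z_{1}\in A$). Hence the map $\pi_{1}$ is a proper, degree two branched cover whose deck transformation is precisely $\alpha_{1}$, and $\tilde{\pi}_{1}$ is a continuous bijection.

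The next step is to upgrade this bijection to a biholomorphism. Let $\Sigma:=\{(z_{1},z_{2},z_{3})\in\mathcal{S}(f,g):z_{1}\in A\}$ denote the locus of singular points (cf.\ (\ref{eq:singular_points})). Outside $\Sigma$ the fiber product is a Riemann surface, $\alpha_{1}$ acts holomorphically, and the quotient is endowed with the standard complex structure making $\mathcal{S}(f,g)\setminus\Sigma\to \bigl(\mathcal{S}(f,g)\setminus\Sigma\bigr)/\langle\alpha_{1}\rangle$ a holomorphic branched cover. Since $\pi_{1}$ is holomorphic with $\alpha_{1}$ as its deck transformation, $\tilde{\pi}_{1}$ restricts to a biholomorphism onto $S(f)\setminus\{(w,0):w\in A\}$.

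It remains to analyze what happens at a singular point $p=(w,0,0)$ with $w\in A$. Because $f$ and $g$ both have simple zeros at $w$, Proposition \ref{pro:RSA} shows that the local degrees of $\beta_{1}$ and $\beta_{2}$ at $(w,0)$ are both $2$, hence $d=2$ and a neighborhood of $p$ in $\mathcal{S}(f,g)$ is of the form $V_{2,2}$, namely the union of two analytic discs $D_{+}$ and $D_{-}$ meeting transversally at $p$. In suitable local coordinates, each $D_{\pm}$ is parametrized by $z_{2}$ via $z_{3}=\pm c\,z_{2}+O(z_{2}^{2})$ for a non-zero constant $c$, so $\alpha_{1}$ interchanges $D_{+}$ and $D_{-}$, and $\pi_{1}$ maps each of $D_{+}$ and $D_{-}$ biholomorphically onto the same smooth disc neighborhood of $(w,0)$ in $S(f)$. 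Consequently the quotient $\mathcal{S}(f,g)/\langle\alpha_{1}\rangle$ is smooth at the image of $p$, and $\tilde{\pi}_{1}$ is biholomorphic across that point. Transferring the complex structure from $S(f)$ through $\tilde{\pi}_{1}^{-1}$ makes $\mathcal{S}(f,g)/\langle\alpha_{1}\rangle$ into a Riemann surface and $\tilde{\pi}_{1}$ into a biholomorphism. The identical argument with $\pi_{2}$ and $\alpha_{2}$ handles the second statement.

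The main obstacle I anticipate is the behavior at the singular points: one must check that identifying the two transverse branches $D_{\pm}$ by the $\mathbb{Z}_{2}$-action genuinely produces a smooth analytic disc (as opposed to a new singularity) and that $\tilde{\pi}_{1}$ is holomorphic across such points. This is handled by the explicit local model above, where $z_{2}$ serves simultaneously as a uniformizer on $S(f)$ near $(w,0)$ and as a uniformizer for the quotient near the image of $p$, so that $\tilde{\pi}_{1}$ is represented by the identity in those coordinates.
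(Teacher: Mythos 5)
Your proof is correct and follows essentially the same route as the paper: the paper likewise descends the projections $\pi_1,\pi_2$ through the quotient maps (invoking the Transgression Theorem for the descent) and identifies the induced maps as biholomorphisms onto $S(f)$ and $S(g)$, so your fiber-by-fiber check and the $V_{2,2}$ local model at the singular points simply supply details the paper leaves implicit. One small caveat on your ``identical argument'' for $\alpha_2$: its fixed points lying over $w_l\notin A$ are smooth points of $\mathcal{S}(f,g)$, so there the quotient map is a genuinely ramified double cover (a disc modulo $z\mapsto -z$) rather than a swap of two transverse branches, but your reasoning still goes through since the descended map remains a holomorphic bijection of Riemann surfaces, hence a biholomorphism.
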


\begin{proof}
We consider the $\langle \alpha_1\rangle$-cover  $p_{1}:\mathcal{S}(f,g)\to \mathcal{S}(f,g)/\langle \alpha_{1} \rangle$. If we consider the projection function $\pi_{1}: \mathcal{S}(f,g) \to S(f)$ as in equation (\ref{eq:projections_pi_1_2}), then we have the following diagram
\begin{equation}
\begin{tikzcd}
\mathcal{S}(f,g) \arrow{d}[swap]{\pi_1} \arrow{r}{p_1} &\mathcal{S}(f,g)/\langle \alpha_{1} \rangle \arrow{ld}{\pi_{1}\circ p_{1}^{-1}} \\
S(f)          &
\end{tikzcd}
\end{equation}
where $\pi_{1}\circ p_{1}^{-1}$ is constant in the  fibres, and by the Transgression Theorem described in \cite{Dugu}*{p.123}, it follows that $\pi_{1}\circ p_{1}^{-1}$ is a biholomorphism. So, the surface $\mathcal{S}(f,g)/\langle \alpha_{1} \rangle$ is a Riemann surface biholomorphic to the Loch Ness monster $S(f)$.

Analogously, considering the $\langle \alpha_2\rangle$-cover $p_{2}:\mathcal{S}(f,g)\to \mathcal{S}(f,g)/\langle \alpha_{2} \rangle$, and the projection function $\pi_{2}: \mathcal{S}(f,g) \to S(g)$ as in equation (\ref{eq:projections_pi_1_2}), we observe that the map  $\pi_{2}\circ p_{2}^{-1}: \mathcal{S}(f,g)/\langle \alpha_{2} \rangle\to S(g)$ is constant in the  fibres, thus by the Transgression Theorem, it follows that $\pi_{2}\circ p_{2}^{-1}$ is a biholomorphism. So, the surface $\mathcal{S}(f,g)/\langle \alpha_{2} \rangle$ is a Riemann surface biholomorphic to the Riemann surface $S(g)$.
\end{proof}

\begin{proposition}\label{teo-cubrientes-dobles}
Let $S$ be a singular Riemann surfaces that is a $\mathbb{Z}_2\oplus\mathbb{Z}_2$-covering of the complex plane $\mathbb{C}$, with $\mathbb{Z}_2\oplus\mathbb{Z}_2\cong\langle \alpha_1,\alpha_2\rangle$. If $\pi_{\alpha_i}:S\to S_i=S/\langle \alpha_i\rangle$, $\beta_{1}:S_1\to S/\mathbb{Z}_2\oplus\mathbb{Z}_2$ and $\beta_{2}:S_2\to S/\mathbb{Z}_2\oplus\mathbb{Z}_2$ are the obvious projection maps,  then the singular Riemann surface $S$ is isomorphic to the fibre product $\mathcal{S}(f,g)$ defined by the following commutative diagram 
\begin{equation}
\begin{tikzcd}
S \arrow{d}[swap]{\pi_{\alpha_1}} \arrow{r}{\pi_{\alpha_2}} &S_2\arrow{d}{\beta_2}\\
S_1 \arrow{r}[swap]{\beta_1}          &S/\mathbb{Z}_2\oplus\mathbb{Z}_2
\end{tikzcd}
\end{equation}
\end{proposition}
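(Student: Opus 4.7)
The plan is to construct $F:S\to \mathcal{S}(f,g)$ via the universal property of the fiber product (diagram (\ref{eq:univeral-property})) and then promote it to an isomorphism of singular Riemann surfaces.

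First, since each $\beta_i: S_i \to \mathbb{C}$ is a branched double cover over a discrete branch locus $B_i \subset \mathbb{C}$, Weierstrass's theorem supplies meromorphic maps $f,g: \mathbb{C}\to \widehat{\mathbb{C}}$ with simple zeros exactly $B_1$ and $B_2$. By Theorem \ref{equivalentehyperellipticcurves} one has $S_1\cong S(f)$ and $S_2\cong S(g)$ biholomorphically over $\mathbb{C}$, so $\mathcal{S}(f,g)$ coincides with $S_1\times_{(\beta_1,\beta_2)}S_2$. The compositions $\beta_1\circ \pi_{\alpha_1}$ and $\beta_2\circ \pi_{\alpha_2}$ both equal the quotient $S\to S/(\mathbb{Z}_2\oplus\mathbb{Z}_2)=\mathbb{C}$, so the universal property yields a unique continuous map $F: S\to \mathcal{S}(f,g)$ with $F(z)=(\pi_{\alpha_1}(z),\pi_{\alpha_2}(z))$ and $\pi_j\circ F=\pi_{\alpha_j}$ for $j\in\{1,2\}$.

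Next, I verify bijectivity by a fiber-by-fiber analysis. Over $w\in \mathbb{C}\setminus (B_1\cup B_2)$ the $\mathbb{Z}_2\oplus\mathbb{Z}_2$-action on the fiber of $S\to \mathbb{C}$ is free, hence that fiber is a single orbit $\{z,\alpha_1 z,\alpha_2 z,\alpha_1\alpha_2 z\}$ of cardinality $4$. Using $\langle \alpha_1\rangle \cap \langle \alpha_2\rangle=\{e\}$ inside $\mathbb{Z}_2\oplus\mathbb{Z}_2$, these four points are sent by $F$ to the four distinct pairs in $\beta_1^{-1}(w)\times \beta_2^{-1}(w)$, which is precisely the fiber of $\mathcal{S}(f,g)\to \mathbb{C}$ over $w$; thus $F$ is bijective off the branch values. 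Above each $w\in B_1\cup B_2$ the drop in fiber cardinality matches on both sides, being controlled by the stabilizer subgroup of the corresponding point in $S$ and by the local degrees of $\beta_i\circ \pi_{\alpha_i}$ computed via Proposition \ref{pro:RSA}, so bijectivity extends globally.

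Finally, I upgrade $F$ to an isomorphism. Away from singular points $F$ is a holomorphic bijection between Riemann surfaces, as it is the pairing of two holomorphic quotient maps (an application of the Transgression Theorem, in the spirit of the proof of Lemma \ref{lemma:quotient}); hence $F$ restricts to a biholomorphism $S\setminus {\rm Sing}(S)\to \mathcal{S}(f,g)\setminus {\rm Sing}(\mathcal{S}(f,g))$. At each singular point of $S$, a local chart of the form $V_{n,m}$ is determined by the local degrees of $\beta_1\circ \pi_{\alpha_1}$ and $\beta_2\circ \pi_{\alpha_2}$, and these coincide with the local degrees governing the corresponding singular point of $\mathcal{S}(f,g)$ by Proposition \ref{pro:RSA}. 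This forces $F$ to carry ${\rm Sing}(S)$ onto ${\rm Sing}(\mathcal{S}(f,g))$ and to be a homeomorphism in a neighborhood of each singular point, completing the isomorphism. I expect the matching of local analytic models at the singular/ramification points to be the main obstacle, since it requires reconciling the ramification data of the $\mathbb{Z}_2\oplus\mathbb{Z}_2$-action on $S$ with the intrinsic singularity structure $V_{n,m}$ of the fiber product.
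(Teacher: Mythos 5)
Your overall skeleton parallels the paper's: identify $S_{1}$ and $S_{2}$ with hyperelliptic curves, produce a canonical map between $S$ and the fiber product, check bijectivity fiber by fiber over $\mathbb{C}$, and then upgrade to an isomorphism of singular Riemann surfaces. The only structural difference is the direction: the paper constructs $\phi:\mathcal{S}(f,g)\to S$ by proving that $\pi_{\alpha_1}^{-1}(\pi_1(x))\cap\pi_{\alpha_2}^{-1}(\pi_2(x))$ is a single point for every $x$, while you construct its would-be inverse $F:S\to\mathcal{S}(f,g)$ from the universal property; that difference is cosmetic.

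The genuine gap is your treatment of the fibers over the branch values, which you dispose of with the assertion that \emph{the drop in fiber cardinality matches on both sides}. That matching is precisely what must be proved, and it is exactly what the paper's Steps 1--5 are devoted to. Concretely, suppose $z\in S$ lies over $w$ and is fixed by $\alpha_1\alpha_2$ but by neither $\alpha_1$ nor $\alpha_2$. Then the fiber of $S$ over $w$ is the two-point orbit $\{z,\alpha_1 z\}$, and since $\alpha_2 z=\alpha_1 z$, each of $\beta_1^{-1}(w)$ and $\beta_2^{-1}(w)$ is a singleton, so the fiber of $S_1\times_{(\beta_1,\beta_2)}S_2$ over $w$ is a single (singular, by Proposition \ref{pro:RSA}) point; the cardinalities do \emph{not} match, $F$ identifies $z$ with $\alpha_1 z$, and injectivity fails unless one shows that in the situation at hand such a configuration can only occur at a singular point of $S$ where the two local branches are already glued (otherwise $F$ is merely the normalization map and the statement itself would fail). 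Handling fixed points of $\alpha_1\alpha_2$ is the content of the paper's Step 1 (injectivity of $\pi_{\alpha_1}$ restricted to a fiber of $\pi_{\alpha_2}$) together with the degree-two counting of Steps 3--5; a cardinality remark plus an appeal to Proposition \ref{pro:RSA} (which describes the local structure of the fiber product, not of the abstract cover $S$) does not substitute for it. Two lesser points: Theorem \ref{equivalentehyperellipticcurves} compares two curves already given in the form $S(f)$ and $S(g)$, so it does not by itself identify the abstract quotient $S_i$ with a hyperelliptic curve (one needs uniqueness of the proper degree-two cover of $\mathbb{C}$ with prescribed branch locus, e.g. via monodromy; the paper is admittedly also brief here); and the matching of local analytic models at the singular points, which you yourself flag as the main obstacle, is asserted rather than carried out.
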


\begin{proof} By results above we have that $S_1$ is biholomorphic to $S(f) =\left\{(z_{1},z_{2})\in\mathbb{C}^2:z_{2}^{2}=f(z_{1})\right\}$, $S_2$ is biholomorphic to $S(g) =\left\{(z_1,z_3)\in\mathbb{C}^2: z_{3}^{2}=g(z_1)\right\}$ and recall that $S/\mathbb{Z}_2\oplus\mathbb{Z}_2$ is biholomorphic to complex plane $\mathbb{C}$.







By the universal property of fibre products, for any commutative diagram of morphisms between non-compact Riemann surfaces as follows
\begin{equation}
\begin{tikzcd}
{\mathcal{S}(f,g)} \arrow{d}[swap]{\pi_1} \arrow{r}{\pi_2} &S(g)\arrow{d}{\beta_2}\\
S(f) \arrow{r}[swap]{\beta_1}          &\mathbb{C}
\end{tikzcd}
\end{equation}
we shall prove that there is a unique morphism $\phi:\mathcal{S}(f,g)\to S$ making the following commutative diagram 
\begin{equation}\label{d4}
\begin{tikzcd}
\mathcal{S}(f,g)
\arrow[bend left]{drr}{\pi_2}
\arrow[bend right,swap]{ddr}{\pi_1}
\arrow[dashed]{dr}[description]{\phi} & & \\
& S \arrow{r}{\pi_{\alpha_2}} \arrow{d}[swap]{\pi_{\alpha_1}}
& S(g) \arrow{d}{\beta_2} \\
& S(f) \arrow[swap]{r}{\beta_1}
& \mathbb{C}
\end{tikzcd}
\end{equation}

We remark that, if such morphism $\phi$ exists, then the above diagrams are commutative, \emph{i.e.}, $(\pi_{\alpha_i}\circ \phi)(x)=\pi_i(x)$, for all $x\in \mathcal{S}(f,g)$. Thus, it must have that the image $\phi(x)\in \pi_{\alpha_i}^{-1}(\pi_i(x))$, $i=1, 2$. 

Therefore, the result would follow if we could prove that, for all points $x\in \mathcal{S}(f,g)$, the intersection set $\pi_{\alpha_1}^{-1}(\pi_1(x))\cap \pi_{\alpha_2}^{-1}(\pi_2(x))$ contains exactly one point $\tilde{x}$ of $S$, in that case the morphism $\phi$ would be defined as  $\phi(x) = \tilde{x}$. Let us prove that this intersection contains a single point.\\

\textbf{Step 1.} The restriction of $\pi_{\alpha_1}$ to $\pi_{\alpha_2}^{-1}(\pi_2(x))$ is injective.\\

Let us assume that this restriction is not injective. Then, let us take the points $P$ and $Q$ in $\pi^{-1}_{\alpha_2}(\pi_{2}(x))$ such that $\pi_{\alpha_1}(P)=\pi_{\alpha_1}(Q)$. From where $\pi_{\alpha_i}(P)=\pi_{\alpha_i}(Q)$ con $i=1,2$. If we suppose $P\neq Q$, as $\alpha_{1}$ and $\alpha_{2}$ are automorphisms (differents) of $S$ of order two, we have $\alpha_{i}(P)=Q$, so $\alpha^{-1}_{1}\circ \alpha_{2}(P)=P$, this implies that $\alpha^{-1}_{1}\circ \alpha_{2}={\rm Id}$, therefore, $\alpha_{1}=\alpha_{2}$,
which is a contradiction since $\alpha_{1}\neq \alpha_{2}$. From where it follows that the restriction of $\pi_{\alpha_1}$ to $\pi_{\alpha_2}^{-1}(\pi_2(x))$ is injective.\\

\textbf{Step 2.}   The set $\pi_{\alpha_1}^{-1}(\pi_1(x))\cap \pi_{\alpha_2}^{-1}(\pi_2(x))$ contains, at most, one point.\\

Let us note that for every element $P\in \pi_{\alpha_1}^{-1}(\pi_1(x))\cap \pi_{\alpha_2}^{-1}(\pi_2(x))\subset \pi_{\alpha_2}^{-1}(\pi_2(x))\subset S$ the image $\pi_{\alpha_1}(P)=\pi_{1}(x)$ (see Diagram 4). As the function $\pi_{\alpha_1}$ restricted to the set $\pi_{\alpha_2}^{-1}(\pi_2(x))$ it is injective, it must happen that there is only one element at the intersection $\pi_{\alpha_1}^{-1}(\pi_1(x))\cap \pi_{\alpha_2}^{-1}(\pi_2(x))$. \\

\textbf{Step 3.}  It holds that $\pi_{\alpha_1}(\pi_{\alpha_2}^{-1}(\pi_2(x)))\subset \beta_1^{-1}(\beta_1(\pi_1(x)))\subset S(f)$.\\

Taking $P\in \pi_{\alpha_2}^{-1}(\pi_{2}(x))\subset S$ we must prove that $\pi_{\alpha_1}(P)\in \beta_{1}^{-1}(\beta_{1}( \pi_{1}(x)))$. From Diagram \ref{d4} we have the relationships $\beta_{1}(\pi_{\alpha_1}(P))=\beta_{2}(\pi_{\alpha_2}(P))=\beta_{2}(\pi_{2}(x))=\beta_{1}(\pi_{1}(x))$, as $\beta_{1}(\pi_{\alpha_1}(P))=\beta_1(\pi_1(x))$ then the point $\pi_{\alpha_1}(P)$ belongs to the fiber $\beta_{1}^{-1}(\beta_1(\pi_{1}(x)))$. \\

\textbf{Step 4.} For all points $x\in X$, we have $\pi_{\alpha_1}(\pi_{\alpha_2}^{-1}(\pi_2(x)))=\beta_1^{-1}(\beta_1(\pi_1(x)))$.\\

As the functions $\pi_{\alpha_i}$ y $\beta_{i}$ are of degree two, that is, each fiber has at most two points, then the sets $\beta_{1}^{-1}(\beta_{1}(\pi_{1}(x)))$ and $\pi_{\alpha_2}^{-1}(\pi_{2}(x))$ have at most two points. As $\pi_{\alpha_1}$ is injective (see step 1), then, we have that the set $\pi_{\alpha_1}(\pi_{\alpha_2}^{-1}(\pi_{2}(x)))$ it has at most two points, that is $\beta_1^{-1}(\beta_1(\pi_1(x)))\subset \pi_{\alpha_1}(\pi_{\alpha_2}^{-1}(\pi_2(x)))$. Using the contention described in step 3, we conclude that $\pi_{\alpha_1}(\pi_{\alpha_2}^{-1}(\pi_2(x)))=\beta_1^{-1}(\beta_1(\pi_1(x)))$.\\

\textbf{Step 5.} For all points $x\in X$, the set $\pi_{\alpha_1}^{-1}(\pi_1(x))\cap \pi_{\alpha_2}^{-1}(\pi_2(x))$ is non-empty.\\

As in the previous step, each point $\pi_{1}(x)\in \beta_{1}^{-1}(\beta_{1}(\pi_{1}(x)))$ can be written as $\pi_{1}(x)=\pi_{\alpha_1}(P)$, for some $P\in \pi_{\alpha_2}^{-1}(\pi_{2}(x))$. Then $P\in\pi_{\alpha_1}^{-1}(\pi_{1}(x))$, in particular $P\in\pi_{\alpha_1}^{-1}(\pi_{1}(x))\cap \pi_{\alpha_2}^{-1}(\pi_{2}(x))$. 

With all above it follows that $\phi$ is an isomorphism.
\end{proof}

Since every branching value of $\beta_{2}$ is also a branching value of $\beta_{1}$ with same branching order two at all ramification points, we may conclude that $\pi_{1}$ and $\pi_{2}$ are an unramified double covering of $S(f)$ and $S(g)$, respectively, (see \cite{Serre}*{p. 116} for details). Then, from  result above, we hold the following observation:

\begin{remark}
The singular Riemann surface $\mathcal{S}(f,g)$ is an  unramified double cover of the Loch Ness monster $S(f)$ and the surface $S(g)$.
\end{remark}\label{t:smooth__double_covering}

\section*{Acknowledgements}

Camilo Ram\'irez Maluendas was partially supported by UNIVERSIDAD NACIONAL DE COLOMBIA, SEDE MANIZALES. He has dedicated this work to his beautiful family: Marbella and Emilio, in appreciation of their love and support.


\end{document}